\newcommand{\diffd}{\mathrm{d}}
\DeclareMathOperator*{\argmin}{arg\,min}
\def\diag{\mathop{\rm diag}\nolimits}
\DeclareMathOperator{\KL}{KL}
\DeclareMathOperator{\tr}{tr}
\DeclareMathOperator{\Id}{Id}
\DeclareMathOperator{\Hneg}{H}
\DeclareMathOperator{\proj}{proj}
\DeclareMathOperator{\op}{op}
\DeclareMathOperator{\HS}{HS}
\DeclareMathOperator{\refe}{ref}
\DeclareMathOperator{\obs}{obs}
\DeclareMathOperator{\Cov}{Cov}
\numberwithin{equation}{section}
\theoremstyle{plain}
\newtheorem{thm}{Theorem}[section]
\newtheorem{rem}{Remark}[section]
\newtheorem{prop}{Proposition}[section]
\newtheorem{cor}{Corollary}[section]
\newtheorem{lem}{Lemma}[section]
\newtheorem{hyp}{Assumption}[section]
\title{Entropic optimal transport beyond product reference couplings:\\
the Gaussian case on Euclidean space}
\author[1]{Paul Freulon\footnote{paul.freulon@epfl.ch}}
\author[1]{Nikitas Georgakis\footnote{nikitas.georgakis@epfl.ch}}
\author[1]{Victor Panaretos\footnote{victor.panaretos@epfl.ch} }
\affil[1]{Ecole Polytechnique Fédérale de Lausanne, Institute of Mathematics,  CH-1015 Lausanne, Switzerland.}
\date{\today}
\begin{document}
	\maketitle
	
	\begin{abstract}
		The Optimal Transport (OT) problem with squared Euclidean cost consists in finding a coupling between two input measures that maximizes correlation. Consequently, the optimal coupling is often singular with respect to the Lebesgue measure. Regularizing the OT problem with an entropy term yields an approximation called entropic optimal transport. Entropic penalties steer the induced coupling toward a reference measure with desired properties. For instance, when seeking a diffuse coupling, the most popular reference measures are the Lebesgue measure and the product of the two input measures. In this work, we study the case where the reference coupling is not a product, focussing on the Gaussian case as a core paradigm. We establish a reduction of such a regularised OT problem to a matrix optimization problem, enabling us to provide a complete description of the solution, both in terms of the primal variable and the dual variables. Beyond its intrinsic interest, allowing non-product references is essential in dynamic statistical settings. As a key motivation, we address the reconstruction of trajectory dynamics from finitely many time marginals where, unlike product references, Gaussian process references produce transitions that assemble into a coherent continuous-time process.

	\end{abstract}
		
	\textbf{Keywords:} Optimal transport; multivariate Gaussian measures; entropic regularization; covariance matrix; reference coupling; trajectory reconstruction

\tableofcontents

\section{Introduction}

\paragraph{Background.} 
Optimal transport allows to compare two probability measures by searching the most efficient way to rearrange the mass of the first measure to recover the second one. Assessing efficiency relies on a bivariate function called the ground cost function. If this ground cost function is a distance or the power of a distance, the optimal transport problem defines a distance on a subset of probability measures. This distance has found many applications in mathematics \cite{villani2009optimal, ambrosioGradientflowsmetric2005a}; for instance in the study of geometric inequalities or partial differential equations, among many other topics. In this work we have particular interest toward problems of a statistical nature \cite{panaretos2020invitation, chewi2024statistical}. In this field, the optimal transport distance allows to compare probability measures with non overlapping support. Also, in statistical problems, one might be interested in actually computing the optimal transport distance through a numerical scheme. In this case, we have to solve a linear programming problem, which can become costly in realistic settings, for example in machine learning contexts. This practical difficulty has motivated the introduction of entropic optimal transport in \cite{cuturi2013sinkhorn}. Beyond its considerable impact in computational optimal transport, this regularized version of the problem is of intrinsic interest and thus is being studied for its own sake \cite{nutz2021introduction}, for statistical interest \cite{chizat2020faster, rigollet2025sample}, and for its connections to Schr\"{o}dinger bridge problems \cite{leonardsurveySchrodinger2013}. In this work, we pursue the study of entropic optimal transport by studying one of the few cases where this problem has a closed-form: when the measures are Gaussian on the Euclidean space $\mathbb{R}^d$. In this scenario, the optimal transport problem is completely solved since \cite{givens1984class}, and then extended to a separable Hilbert space in \cite{cuesta1996lower}. The optimal transport distance when restricted to centered Gaussian measures is called the Bures-Wasserstein distance, with roots in quantum information theory \cite{nielsen2001quantum, bengtsson2017geometry}, and the geometric properties of this metric space is an active domain of research \cite{takatsu2011wasserstein, bhatia2019bures}. Consequently, the Gaussian case is of interest in its own right, beyond its role as a central test case. Accordingly, entropic optimal transport on $\mathbb{R}^d$ has been an object of study in the Gaussian case specifically, particularly when the reference measure is taken to be the product of the two marginals \cite{janati2020entropic, mallastoEntropyregularized2WassersteinDistance2022, del2020statistical} (with an extension to Hilbert spaces by \cite{minh2023,yun2025spectral}). In this paper we follow this line of work, but we study the impact of choosing a reference coupling which is not necessarily a product measure (whether of the two marginals or otherwise). To motivate why this is worthwhile, we first need to introduce some basic notions and notation related to optimal transport and its entropic version.
    
\paragraph{Entropic optimal transport} Let $\mu$ and $\nu$ be two probability measures on $\mathbb{R}^d$, and let $\Pi(\mu, \nu) := \{\pi \in \mathcal{P}( \mathbb{R}^d \times  \mathbb{R}^d)~|~ \proj_1 \# \pi = \mu ~ \text{and} \proj_2 \# \pi = \nu \}$ be the set of all measures on $\mathbb{R}^d \times \mathbb{R}^d$ that have $\mu$ and $\nu$ as first and second marginal respectively. We refer to elements of $\Pi(\mu, \nu)$ as transport plans, or couplings between $\mu$ and $\nu$. On the Euclidean space $\mathbb{R}^d$, the squared optimal transport problem between $\mu$ and $\nu$ is
\begin{equation}\label{eq:def_classic_ot}
   W_2^2(\mu, \nu) := \min_{\pi \in \Pi(\mu, \nu)} \int_{ \mathbb{R}^d \times  \mathbb{R}^d} \|x-y\|^2 d \pi(x,y).
\end{equation} 
The square root  of \eqref{eq:def_classic_ot} is a specific instance of Wasserstein distances that corresponds to the ground cost function $c(x,y)= \|x-y\|^2$. Another criterion to compare probability measures is the Kullback-Leibler divergence. This divergence, also called relative entropy, is defined for two measures $\pi$ and $\pi_{\refe}$ by the formula
\begin{equation}\label{eq:definition_kl}
	\KL(\pi|\pi_{\refe}) := \int \log \left(\frac{d\pi}{d\pi_{\refe}} \right) d\pi,
\end{equation}
where $d\pi/d\pi_{\refe}$ denotes the Radon-Nikodym derivative of $\pi$ with respect to $\pi_{\refe}$, if $\pi$ is absolutely continuous with respect to $\pi_{\refe}$. If $\pi$ is not absolutely continuous with respect to $\pi_{\refe}$, $\KL(\pi|\pi_{\refe}):= +\infty$. In entropic optimal transport, the Kullback-Leibler divergence \eqref{eq:definition_kl} is exploited as a regularizing term for the optimal transport problem \eqref{eq:def_classic_ot}. Thus, entropic optimal transport refers to problems of the form
\begin{equation}\label{eq:def_entropic_ot}
	W_{\pi_{\refe}}^{\varepsilon}(\mu, \nu): = \min_{\pi \in \Pi(\mu, \nu)}\int_{\mathbb{R}^d \times \mathbb{R}^d}\|x-y\|^2 d\pi + 2\varepsilon\KL(\pi|\pi_{\refe});
\end{equation}
where $\varepsilon \geq 0$ is a regularization parameter, and $\pi_{\refe}$ is a measure on $\mathbb{R}^d \times \mathbb{R}^d$ that we call the reference coupling.
To the best of our knowlege, there are two reference couplings have been investigated (indeed, the two are related). First, the Lebesgue measure on $\mathbb{R}^d \times \mathbb{R}^d$. In this case, the Kullback-Leibler divergence equals the negative entropy that we denote by $\Hneg$, and defined by $\Hneg(\pi) := \int\log(d\pi(x)/dx) d \pi(x)$ if $\pi$ is absolutely continuous with respect to the Lebesgue measure; and $\Hneg(\pi) = +\infty$ otherwise. Notice that, strictly speaking, the Lebesgue measure is not a coupling of $\mu$ and $\nu$, but we abuse terminology occasionally and allow ``coupling" to signify a measure on the product space. The other popular coupling is the independent coupling of the input measures. This corresponds to choosing $\pi_{\refe}=\mu \otimes \nu$, which yields the regularizing term $\KL(\pi|\mu \otimes \nu)$. These two reference couplings, that we also call reference transport plans, define the two optimization problems
\begin{equation}\label{eq:classic_eot}
    \begin{split}
    W_{\Hneg}^\varepsilon(\mu, \nu) &:= \min_{\pi \in \Pi(\mu, \nu)} \int\|x-y\|^2 d\pi(x,y) + 2\varepsilon \Hneg(\pi) \quad \text{and,} \\
     W_{\otimes}^\varepsilon(\mu, \nu) & := \min_{\pi \in \Pi(\mu, \nu)} \int\|x-y\|^2 d\pi(x,y) + 2\varepsilon \KL(\pi|\mu \otimes \nu).
    \end{split}
\end{equation}

These two regularized optimal transport problems are closely related. As pointed out for instance in \cite[Lem.~1.5]{marino2020optimal} or in  \cite[Prop.~4.2]{peyre2019computational}, we have the following correspondences. For $\pi \in \Pi(\mu, \nu)$, if $\mu$ and $\nu$ are absolutely continuous with respect to the Lebesgue measure, we have the relation
    \begin{equation*}
        \KL(\pi | \mu \otimes \nu) = \Hneg(\pi) - (\Hneg(\mu) + \Hneg(\nu)).
    \end{equation*}
    From this observation, it follows that $W_{\Hneg}^{\varepsilon}$ and $W_\otimes^\varepsilon$ relate through the equality
    \begin{equation}
        W_\otimes^\varepsilon(\mu, \nu) = W_{\Hneg}^\varepsilon(\mu, \nu) - 2\varepsilon(\Hneg(\mu) + \Hneg(\nu));
    \end{equation}
and both problems have the same solution $\pi^\varepsilon$. As stated in the abstract, the optimal transport problem with squared Euclidean cost is equivalent to searching for the coupling between $\mu$ and $\nu$ that maximizes correlation: expanding the squared Euclidean cost enables to write
\begin{equation}\label{eq:ot_maximize_correlation}
    \min_{\pi \in \Pi(\mu, \nu)} \int\|x-y\|^2 d\pi(x,y) = -  2\max_{\pi \in  \Pi(\mu, \nu)} \int \langle x,y \rangle d\pi(x,y) +  \text{constant}. 
\end{equation}
And in \eqref{eq:ot_maximize_correlation}, the right-hand side problem is a correlation-type term induced by $\pi$. So one expects the solution to be as close as possible to a deterministic linear function, subject to the marginal constraint. Indeed, Brenier's theorem \cite{brenier1991polar} states that when $\mu$ is absolutely continuous, the optimal coupling $\pi^\star$ is deterministic, in the sense that it is supported on the graph of a function $T : \mathbb{R}^d \rightarrow \mathbb{R}^d$ (said differently, $\pi^* = (\Id,T)\#\mu$).
On the other hand, the Kullback-Leibler terms in the regularized versions \eqref{eq:classic_eot} favor solutions with minimum (zero) correlation. Thus, considering $\mu \otimes \nu$ or the Lebesgue measure yields a regularization term adversarial to the optimal transport problem -- which reflects an implicit preference or prior toward diffuse couplings. 

That being said, there may well be other qualitative features that the user may want to steer the solution toward, depending on prior structural information -- in which case, one would require entropic regularisation with respect to a non-product reference. This is especially true in dynamic contexts, where the sought coupling's purpose is precisely to induce transition dynamics. A key such example is the so-called problem of \emph{trajectory inference}, where one only has access to time-marginals of a random process. Typical examples include destructive measurement regimes in biology \cite{fischer2019inferring}. 
In such a setting, the underlying dynamics cannot be observed directly, and a central question is whether one can construct meaningful dynamics from the purely static information available. Choosing a reference process, coherent dynamics can be induced by sequentially solving pairwise entropic transport problems, with reference couplings from a common continuous-time process. Here it is crucial to depart from the standard choice of product reference, which would steer toward trivial dynamics that cannot behave coherently across time-scales.\\

\color{Black}
\paragraph{Outline and contributions} In Section \ref{sec:statement_reduction}, we introduce our basic pairwise framework and show that the problem of entropic coupling with general reference is well-defined. This first part enables us to introduce our approach based on matrix analysis. The main results of this paper are in Section \ref{sec:closed_form} where we solve the pairwise Gaussian entropic optimal transport problem relative to a general reference measure. Our first proof is based on the study of the primal objective function. Then, we recover the result through the derivation and solution of the dual problem. Our main result relies on the assumption that a certain matrix is invertible. Section \ref{sec:examples} begins with a study of this assumption and provides two sufficient conditions for this to hold. In the same section, two examples of reference couplings are studied: a proper coupling only parametrized by a correlation matrix, and a reference coupling with independent coordinates. In Section \ref{sec:static_to_dyn} we bring our results to bear on the problem of trajectory reconstruction 
through a sequential implementation of our pairwise results. In that same context,  Section \ref{sec:numerical_experiments} illustrates the merits of using a general reference with way of simulating sample paths reconstructed from marginal distributions. A separate appendix collects proofs for the dual problem strategy (Section \ref{sec:proofs_dual}) and auxiliary results used in the proofs of our main statements (Section \ref{sec:auxiliary}).

\paragraph{Notation}
We use the notation $A:=B$ to say that quantity $A$ is defined by formula $B$. For $E$ and $F$ two measurable spaces, if $T : E  \rightarrow F$ is a measurable map and $\mu$ a measure on $E$, we denote by $T\#\mu$ the push-forward measure defined for every measurable set $B$ of $F$ by $T\#\mu(B) := \mu(T^{-1}(B))$. The positive integer $d$ denotes the dimension of the ambient space $\mathbb{R}^d$. The first coordinate projection $\proj_1: \mathbb{R}^d \times  \mathbb{R}^d \rightarrow \mathbb{R}^d$ is defined by $\proj_1(x,y) = x$. In the same way, for every $(x,y) \in \mathbb{R}^d \times \mathbb{R}^d$, $\proj_2(x,y) = y$. For $x,y \in \mathbb{R}^d$, $\langle x, y \rangle$ is the usual inner product defined by $\langle x, y \rangle := x^Ty$, where $x^T$ is the transpose of $x$. The space of $d\times d$ squared matrices with real coefficients is denoted by $M_d(\mathbb{R})$. For the subspace of $M_d(\mathbb{R})$ composed of symmetric matrices, we use the notation $S_d(\mathbb{R})$. The cone of positive-semidefinite matrices, i.e. matrices $M \in S_{d}(\mathbb{R})$ such that for every $x \in \mathbb{R}^d,~ \langle M x, x \rangle \geq 0$, is denoted by $S_{d}^{+}(\mathbb{R})$. In the case the symmetric matrix $M$ is such that for every $x \in \mathbb{R}^d \setminus \{0\},~ \langle Mx,x\rangle >0$, we say that $M$ positive-definite and use the notation $M \in S_{d}^{++}(\mathbb{R})$. Sometimes, if $A$ and $B$ are two symmetric matrices, we write $A \geq 0$ and $B > 0$ instead of $A \in S_d^{+}(\mathbb{R})$ and $B \in S_d^{++}(\mathbb{R})$. For $A, B$ two matrices, we denote by $\langle A, B \rangle_{\HS} := \tr(A B^T)$ the Hilbert-Schmidt inner product, also often called the Frobenius inner product. We mention that if $A, B \in S_d(\mathbb{R})$ are symmetric matrices then $\langle A, B \rangle_{\HS} = \tr(AB)$. For a positive-semidefinite matrix $A \in S_d^{+}(\mathbb{R})$, we denote by $N_d(A)$ the centered Gaussian measure defined on $\mathbb{R}^d$ with covariance matrix $A$. Similarly, if $\Sigma$ is a covariance matrix on $\mathbb{R}^d \times \mathbb{R}^d$, we use the notation $N_{2d}(\Sigma)$ for the centered Gaussian measure it defines. For a square matrix $R\in M_d(\mathbb{R})$, we denote by $\|R\|_{\op}$ the matrix norm defined by $\|R\|_{\op} := \sup_{\|x\| \leq 1} \|Rx\|$.

\section{Statement of the problem and matrix reduction}\label{sec:statement_reduction}

\subsection{Statement of the problem}
Let $\mu = N_d(A)$ and $\nu = N_d(B)$ be two centered Gaussian measures with respective covariance matrices $A$ and $B$, assumed of full rank. In this work, we investigate the case where the reference measure $\pi_{\refe}$ introduced in equation \eqref{eq:def_entropic_ot} is an arbitrary centered Gaussian measure (not necessarily having marginals $\mu$ and $\nu$; we consider that special case in Section \ref{sec:correlation_parameter}). Thus, for a user-chosen positive-definite matrix $\Sigma \in S_{2d}^{++}(\mathbb{R})$ and a regularization parameter $\varepsilon > 0$, the optimal transport problem we study is
\begin{equation}\label{eq:gaussian_kl_ot}
    W_{\Sigma}^\varepsilon(\mu, \nu) := \inf_{\pi \in \Pi(\mu, \nu)} \int_{\mathbb{R}^d \times \mathbb{R}^d} \|x-y\|^2 d\pi(x,y) + 2\varepsilon \KL(\pi|N_{2d}(\Sigma)).
\end{equation}
Hence, we are aiming to minimize the objective function 
\begin{equation}\label{eq:objective_function}
	I_{\Sigma}^{\varepsilon}(\pi): = \int_{\mathbb{R}^d \times \mathbb{R}^d} \|x-y\|^2 d\pi(x,y) + 2\varepsilon \KL(\pi|N_{2d}(\Sigma)),
\end{equation}
where $\pi$ belongs to the constraint set $\Pi(\mu,\nu)$. For Gaussian measures, problem \eqref{eq:gaussian_kl_ot} is a generalization of classic entropic optimal transport. In this problem, when the reference covariance matrix is chosen to be the block diagonal matrix $\Sigma=\diag(A, B)$, we recover the regularized optimal transport with penalty term $\KL(\cdot| \mu \otimes \nu)$. We point out that multiplying the Kullback-Leibler divergence by $2\varepsilon$ in \eqref{eq:gaussian_kl_ot} instead of 
$\varepsilon$ is arbitrary, but will remove factors $1/2$ in upcoming computations. As observed before us, for instance in \cite{marino2020optimal, leonardsurveySchrodinger2013}, the regularized problem \eqref{eq:gaussian_kl_ot} can be reformulated as a (static) Schr\"{o}dinger bridge problem. Indeed, we have the following equality 
\begin{equation}\label{eq:schroed_pb}
	W_\Sigma^{\varepsilon}(\mu, \nu) = 2\varepsilon \inf_{\pi \in \Pi(\mu, \nu)} \KL\left(\pi | e^{-\frac{\|x-y\|^2}{2 \varepsilon}}\pi_{\refe}(dxdy) \right),
\end{equation}
where in this case $\pi_{\refe} = N_{2d}(\Sigma)$. The first questions regarding \eqref{eq:gaussian_kl_ot} relate to the well-posedness of the minimization problem defining the quantity $W_\Sigma^{\varepsilon}$. To address the questions of existence and uniqueness of a solution to problem \eqref{eq:gaussian_kl_ot}, we exploit results on Schr\"{o}dinger bridge problems in the specific case of Gaussian measures. 

\begin{lem} Let $\Sigma$ be a full-rank covariance matrix acting on $\mathbb{R}^d \times \mathbb{R}^d$. Then, for any pair of Gaussian measures $\mu = N_d(A)$ and $\nu = N_d(B)$ with non-singular covariances $A$ and $B$, the regularized optimal transport problem \eqref{eq:gaussian_kl_ot} has a unique solution.
\end{lem}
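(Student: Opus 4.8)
The strategy is to reduce the problem to a Gaussian Schr\"odinger bridge and then obtain existence from lower semicontinuity and compactness, and uniqueness from strict convexity.

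First I would pass to the reformulation \eqref{eq:schroed_pb}: minimizing $I_\Sigma^\varepsilon$ over $\Pi(\mu,\nu)$ is the same as minimizing $\pi \mapsto \KL(\pi \,|\, R)$ over $\Pi(\mu,\nu)$, where $R(dx\,dy) := e^{-\|x-y\|^2/(2\varepsilon)}\,N_{2d}(\Sigma)(dx\,dy)$. Since the Gibbs factor is bounded by $1$, $R$ is a finite measure; normalising it to a probability measure $\bar R := R/R(\mathbb{R}^{2d})$ changes $\KL(\,\cdot\,|\,R)$ only by the additive constant $-\log R(\mathbb{R}^{2d})$, hence does not affect the argmin. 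Moreover $\bar R$ is again a \emph{non-singular} centered Gaussian: writing $z=(x,y)$, its Lebesgue density is proportional to $\exp\!\big(-\tfrac12\langle(\Sigma^{-1}+\varepsilon^{-1}Q)z,\,z\rangle\big)$, where $Q\in S_{2d}^{+}(\mathbb{R})$ is the positive-semidefinite matrix of the quadratic form $(x,y)\mapsto\|x-y\|^2$, and $\Sigma^{-1}+\varepsilon^{-1}Q\in S_{2d}^{++}(\mathbb{R})$ because $\Sigma^{-1}>0$ and $Q\ge 0$. Thus the problem is a Gaussian Schr\"odinger bridge between the non-singular Gaussians $\mu$ and $\nu$ with non-singular Gaussian reference $\bar R$, and I would invoke the corresponding existence/uniqueness statements from the Schr\"odinger-bridge literature cited above, after verifying their single hypothesis: that there exists at least one $\pi\in\Pi(\mu,\nu)$ with $\KL(\pi\,|\,\bar R)<\infty$.

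That hypothesis holds for $\pi_0:=\mu\otimes\nu=N_{2d}(\diag(A,B))$. Indeed $\int\|x-y\|^2\,d\pi_0=\tr(A)+\tr(B)<\infty$, and $\KL(\pi_0\,|\,N_{2d}(\Sigma))$ is the relative entropy between two non-singular centered Gaussians on $\mathbb{R}^{2d}$, which is finite and given by the usual closed form; by the previous paragraph $\KL(\pi_0\,|\,\bar R)<\infty$ as well. In particular $I_\Sigma^\varepsilon(\pi_0)<\infty$, so $W_\Sigma^\varepsilon(\mu,\nu)\in\mathbb{R}$. If one prefers a self-contained argument working directly from \eqref{eq:objective_function}: the set $\Pi(\mu,\nu)$ is convex, weakly closed, and uniformly tight (all its elements share the tight marginals $\mu,\nu$), hence weakly compact by Prokhorov; the map $\pi\mapsto\int\|x-y\|^2\,d\pi$ is affine, weakly lower semicontinuous, and in fact finite throughout $\Pi(\mu,\nu)$ (bounded by $2\tr(A)+2\tr(B)$), while $\pi\mapsto\KL(\pi\,|\,N_{2d}(\Sigma))$ is weakly lower semicontinuous and strictly convex on its effective domain $\{\pi:\KL(\pi\,|\,N_{2d}(\Sigma))<\infty\}$ (strict convexity of $t\mapsto t\log t$). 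For $\varepsilon>0$ the functional $I_\Sigma^\varepsilon$ is therefore weakly lower semicontinuous on the weakly compact set $\Pi(\mu,\nu)$ --- so a minimiser exists --- and strictly convex on the nonempty convex set $\{\pi\in\Pi(\mu,\nu):\KL(\pi\,|\,N_{2d}(\Sigma))<\infty\}$ --- so the minimiser is unique.

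I expect the only delicate points to be: (i) the identification of the tilted reference $\bar R$ as a non-singular Gaussian, which is what brings the Gaussian Schr\"odinger-bridge results into play and what makes $\KL(\pi_0\,|\,\bar R)$ visibly finite; and (ii) the weak lower semicontinuity of relative entropy on the non-compact space $\mathbb{R}^{2d}$ (handled, e.g., via the Donsker--Varadhan variational representation). Both are standard, and the remainder of the argument is soft convexity and compactness.
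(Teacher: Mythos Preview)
Your proposal is correct and follows essentially the same approach as the paper: exhibit the product coupling $\mu\otimes\nu$ as an admissible plan with finite objective (finite transport cost plus finite KL between two non-singular Gaussians), then invoke the standard existence/uniqueness result for entropic OT~/~Schr\"odinger bridges (the paper cites \cite[Thm.~2.1]{nutz2021introduction}). Your extra observations --- that the tilted reference $\bar R$ is itself a non-singular Gaussian, and the self-contained compactness/lower-semicontinuity/strict-convexity argument --- are correct and go slightly beyond what the paper writes, but do not constitute a different route.
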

\begin{proof}	
	Denote by $\mu \otimes \nu \in \Pi(\mu, \nu)$ the product measure induced by $\mu = N_d(A)$ and $\nu = N_d(B)$. 
	The transport cost of $\mu \otimes \nu$ can be explicitly computed and is given by 
	\begin{align*}
		I(\mu \otimes \nu ) & = \int_{\mathbb{R}^d}\|x \|^2 d\mu(x) + \int_{\mathbb{R}^d} \|y \|^2 d\nu(y) - 2 \int_{\mathbb{R}^d \times \mathbb{R}^d} \langle x , y \rangle d\mu\otimes \nu(x,y) \\
		& = \int_{\mathbb{R}^d}\|x \|^2 d\mu(x) + \int_{\mathbb{R}^d} \|y \|^2 d\nu(y) < + \infty.
	\end{align*}
We now study the Kulback-Leibler divergence term. As $\mu$ and $\nu$ are centered Gaussian measures, the product measure $\mu \otimes \nu$ is also centered Gaussian, and has covariance matrix $\Sigma_{\otimes}$ defined by
\begin{equation*}
	\Sigma_{\otimes} := \begin{pmatrix}
		A & 0 \\
		0 & B
	\end{pmatrix}.
\end{equation*}
From this observation and Proposition \ref{prop:kl_gaussian}, we deduce the equalities 
\begin{align*}
	2\KL(\mu\otimes \nu | N_{2d}(\Sigma)) & = 2\KL(N_{2d}(\Sigma_{\otimes})| N_{2d}(\Sigma)) \\
	& = \tr(\Sigma^{-1} \Sigma_{\otimes} - I_{2d}) - \log \det(\Sigma^{-1}\Sigma_{\otimes}).
\end{align*}
As $\Sigma $ and $\Sigma_{\otimes}$ have full rank, $\det(\Sigma^{-1}\Sigma_{\otimes}) > 0$. Hence $\KL(\mu\otimes \nu | N_{2d}(\Sigma)) < +\infty$. These computations of the transport term and the Kullback-Leibler divergence term show that the objective function \eqref{eq:objective_function} of the regularized problem \eqref{eq:gaussian_kl_ot} is finite when evaluated at $\mu \otimes \nu$. Applying \cite[Thm.~2.1]{nutz2021introduction} ensures that there exists a unique solution to regularized transport problem \eqref{eq:gaussian_kl_ot}.
\end{proof}

To derive a closed form for the optimal transport problem under study, we exploit that centered Gaussian measures are fully characterized by their covariance matrices.

\subsection{Matrix reduction}

When the input measures are centred Gaussian, the set of admissible couplings $\Pi(\mu,\nu)$ can be reduced to a set of admissible cross-covariance matrices. Specifically, for $A$ and $B$ two full-rank covariance matrices on $\mathbb{R}^d$, we introduce the convex constraint set 
\begin{equation}\label{eq:notation_admissible_cov}
	\Pi^{+}(A,B) := \left\{ C \in M_d(\mathbb{R}) ~|~ \begin{pmatrix}
		A & C \\
		C^T& B
	\end{pmatrix} \geq 0 \right\}.
\end{equation}

\begin{lem}\label{lem:linearot_gaussmat}
	Set $\mu=N_d(A)$ and $\nu = N_d(B)$ two centered Gaussian measures with covariance matrices denoted by $A$ and $B$.  The optimal transport problem between $\mu$ and $\nu$ reduces to minimizing a scalar product. Indeed, the equality
	\begin{equation}\label{eq:linearot_gaussmat}
		\min_{\pi \in \Pi(\mu, \nu)} \int_{\mathbb{R}^d \times \mathbb{R}^d} \|x-y\|^2 d \pi(x,y) = \min_{C \in \Pi^{+}(A,B)} \left \langle \begin{pmatrix}
			I_d & -I_d \\
			-I_d & I_d
		\end{pmatrix}, \begin{pmatrix}
			A & C \\
			C^T& B
		\end{pmatrix}  \right \rangle_{\rm HS}
	\end{equation}
	holds true. In last equation, the right-hand is a Hilbert-Schmidt inner product, which is defined for two arbitrary matrices $M, N \in M_d(\mathbb{R})$ by $\langle M, N \rangle_{\HS} = \tr(M N^T)$.
\end{lem}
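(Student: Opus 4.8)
The plan is to expand the quadratic cost and observe that, since $\mu$ and $\nu$ are centered with finite second moments, the objective on the left-hand side depends on $\pi$ only through its cross-covariance matrix $C_\pi := \int_{\mathbb{R}^d \times \mathbb{R}^d} x y^T \, d\pi(x,y)$. Writing $\|x-y\|^2 = \|x\|^2 - 2\langle x,y\rangle + \|y\|^2$ and using the marginal constraints,
\begin{equation*}
	\int_{\mathbb{R}^d \times \mathbb{R}^d} \|x-y\|^2 \, d\pi = \tr(A) + \tr(B) - 2\tr(C_\pi),
\end{equation*}
where $\int \|x\|^2 d\pi = \tr(A)$ and $\int \|y\|^2 d\pi = \tr(B)$ are finite because $\mu,\nu$ are Gaussian (and centered), and $\int |\langle x,y\rangle| \, d\pi < \infty$ by Cauchy--Schwarz. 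On the right-hand side, a direct computation of the Hilbert--Schmidt inner product gives the same expression:
\begin{equation*}
	\left\langle \begin{pmatrix} I_d & -I_d \\ -I_d & I_d \end{pmatrix}, \begin{pmatrix} A & C \\ C^T & B \end{pmatrix} \right\rangle_{\HS} = \tr(A) - \tr(C^T) - \tr(C) + \tr(B) = \tr(A) + \tr(B) - 2\tr(C).
\end{equation*}
Hence both sides equal $\tr(A) + \tr(B) - 2\sup \tr(C)$, the suprema being over $\{C_\pi : \pi \in \Pi(\mu,\nu)\}$ and over $\Pi^{+}(A,B)$ respectively, and it suffices to show that these two sets coincide.

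Next I would establish the set identity $\{C_\pi : \pi \in \Pi(\mu,\nu)\} = \Pi^{+}(A,B)$. For the inclusion ``$\subseteq$'': if $\pi \in \Pi(\mu,\nu)$ then $\pi$ is a centered measure on $\mathbb{R}^d \times \mathbb{R}^d$ whose covariance matrix is $\begin{pmatrix} A & C_\pi \\ C_\pi^T & B \end{pmatrix}$, which is positive semidefinite, being a covariance matrix; by the definition \eqref{eq:notation_admissible_cov} this means $C_\pi \in \Pi^{+}(A,B)$. For the reverse inclusion ``$\supseteq$'': given $C \in \Pi^{+}(A,B)$, the matrix $\Sigma_C := \begin{pmatrix} A & C \\ C^T & B \end{pmatrix}$ is a genuine positive-semidefinite, hence covariance, matrix on $\mathbb{R}^d \times \mathbb{R}^d$, so the centered Gaussian $N_{2d}(\Sigma_C)$ is well defined; its pushforward under $\proj_1$ (resp.\ $\proj_2$) is the centered Gaussian with covariance equal to the top-left (resp.\ bottom-right) block, i.e.\ $\mu = N_d(A)$ (resp.\ $\nu = N_d(B)$). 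Therefore $N_{2d}(\Sigma_C) \in \Pi(\mu,\nu)$ and its cross-covariance equals $C$, which proves the claim.

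Combining the two points yields the asserted equality of optimization values. For attainment on the right-hand side, I would note that $\Pi^{+}(A,B)$ is closed and bounded --- boundedness following from the semidefiniteness constraint, which forces $\|C\|_{\op}^2 \leq \|A\|_{\op}\|B\|_{\op}$ (equivalently, $C = A^{1/2} K B^{1/2}$ for some contraction $K$) --- hence compact, while the objective is linear and thus continuous, so a minimizer $C^\star$ exists; the corresponding Gaussian coupling $N_{2d}(\Sigma_{C^\star})$ then attains the minimum on the left-hand side, so both ``$\min$'' are justified. The only genuinely substantive step is the set identity, and within it the reverse inclusion: one must produce, for every admissible cross-covariance matrix, an \emph{actual} coupling of $\mu$ and $\nu$ realizing it. This is exactly where the Gaussian structure is used --- the Gaussian with the prescribed block covariance does the job and automatically carries the correct marginals --- so I expect no serious obstacle, only the bookkeeping above.
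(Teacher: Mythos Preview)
Your proposal is correct and follows essentially the same approach as the paper: expand the squared cost to reduce the objective to $\tr(A)+\tr(B)-2\tr(C)$, identify this with the Hilbert--Schmidt inner product, and use that every admissible cross-covariance $C\in\Pi^{+}(A,B)$ is realized by the Gaussian coupling $N_{2d}(\Sigma_C)$. If anything, your version is slightly more complete, since you spell out both inclusions of the set identity $\{C_\pi:\pi\in\Pi(\mu,\nu)\}=\Pi^{+}(A,B)$ and justify attainment via compactness, points the paper leaves implicit.
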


\begin{proof}
	For $\pi \in \Pi(\mu, \nu)$, the transport cost is
	\begin{align*}
		I(\pi) & :=\int_{\mathbb{R}^d \times  \mathbb{R}^d} \|x-y\|^2 d\pi(x,y)\\
		&  = \int_{\mathbb{R}^d} \|x\|^2 d\mu(x) + \int_{\mathbb{R}^d} \|y\|^2 d\nu(y) - 2 \int_{\mathbb{R}^d \times \mathbb{R}^d} \langle x, y \rangle  d\pi(x,y)\\
		& = \tr(A) + \tr(B) - 2 \int_{\mathbb{R}^d \times \mathbb{R}^d} \langle x, y \rangle  d\pi(x,y).
	\end{align*}
	This last equation shows that the transport cost depends only on the covariance matrix of the transport plan. Moreover, for every matrix $C \in M_d(\mathbb{R})$ such that the matrix 
	\begin{equation*}
		X_C := \begin{pmatrix}
			A & C \\
			C^T & B 	
		\end{pmatrix}
	\end{equation*}
	 is positive-semidefinite, as $\mu$ and $\nu$ are Gaussian measures, the centered Gaussian measure $\pi_C := N_{2d}(X_C)$ belongs to $\Pi(\mu, \nu)$. Thus, we can parametrize the optimal transport problem as follows 
	\begin{equation}
		\min_{C \in \Pi^+(A,B)} I(\pi_C),
	\end{equation}
	where $\Pi^+(A,B)$ is the constraint set introduced in equation \eqref{eq:notation_admissible_cov}. With this new parametrization, we rewrite by $I(C) = I(\pi_C)$. That is, the objective function reads
	\begin{equation}
		I(C) =  \int_{\mathbb{R}^{2d}} \| x - y\|^2 d \pi_C(x,y).
	\end{equation}
	Doing some computations now yields 
	\begin{equation}
		I(C) = \tr(A) + \tr(B) - 2 \tr(C) = \left\langle \begin{pmatrix}
			I_d & -I_d \\
			-I_d & I_d
		\end{pmatrix}, \begin{pmatrix}
			A & C \\
			C^T & B
		\end{pmatrix}\right \rangle_{\rm HS}. 
	\end{equation}
\end{proof}

Lemma \ref{lem:linearot_gaussmat} is a classic optimal transport result when the two input measures are centered Gaussian measures. Results of this flavor can thus be found in the literature, for instance in  \cite{bhatia2019bures} or \cite[Sec.~1.6.3]{panaretos2020invitation}, as detailed in next Remark. But we explicitly recall this reduction here as it enables us to introduce our approach and notations.

\begin{rem}[Bures-Wasserstein distance]\label{rem:bures_wass}
	The problem studied in Lemma \ref{lem:linearot_gaussmat} is the $2$-optimal transport problem between Gaussian measures. This problem was already solved in \cite{givens1984class}, and extended to the case of a separable Hilbert space in \cite{cuesta1996lower}. As pointed out in the more recent work \cite{bhatia2019bures}, an alternative way to formulate the Gaussian optimal transport problem \eqref{eq:linearot_gaussmat} is 
	\begin{equation}\label{eq:bures_wass_optim_prob}
		W_2^2(\mu, \nu) = \min_{C \in M_d(\mathbb{R})} \tr(A) + \tr(B) - 2 \tr(C) \quad \text{such that} \quad \begin{pmatrix}
			A & C \\
			C^T & B
		\end{pmatrix} \geq 0.
	\end{equation}
	In the same reference, one can find the solution to problem \eqref{eq:bures_wass_optim_prob} which is given by the non-symmetric matrix $\sqrt{AB}$ defined by
	\begin{equation}\label{eq:square_root_product}
		\sqrt{AB} := A^{1/2}\left( A^{1/2}B A^{1/2} \right)^{1/2} A^{-1/2}.
	\end{equation}
	It follows that the $2$-Wasserstein distance between Gaussian measures has the closed form expression
	\begin{equation}\label{eq:solution_bures_wass}
		W_2^2(\mu, \nu) = \tr(A) + \tr(B) - 2 \tr\left( A^{1/2}B A^{1/2} \right)^{1/2}.
	\end{equation}
\end{rem}

The right-hand side of equation \eqref{eq:linearot_gaussmat} involves a Hilbert-Schmidt inner product between two matrices we will repeatedly manipulate throughout what follows. We thus introduce separate notations for these two important matrices. From Lemma \ref{lem:linearot_gaussmat}, the optimal transport problem between Gaussian measures reduces to minimizing the Hilbert-Schmidt scalar product between an admissible covariance matrix $X_C$, and another matrix $Y$ acting on $\mathbb{R}^d \times \mathbb{R}^d$. This matrix is defined by  
	\begin{equation}\label{eq:notation_otmatrix}
		Y := \begin{pmatrix}
			I_d & -I_d \\
			-I_d & I_d
		\end{pmatrix}.
	\end{equation}
	We sometimes refer to $Y$ as the optimal transport matrix. The second matrix involved in the inner product \eqref{eq:linearot_gaussmat} is a covariance matrix.  Let $\pi$ be an admissible coupling between $\mu=N_d(A)$ and $\nu = N_d(B)$. Then, there exists a squared matrix $C \in M_d(\mathbb{R})$ such that we can write the covariance matrix of $\pi$ as
	\begin{equation}\label{eq:notation_crosscov}
		X_C := \begin{pmatrix}
			A & C \\
			C^T & B
		\end{pmatrix}.
	\end{equation}
The matrix $C$ is called the cross-covariance of $\Sigma$, and if a pair of random variable $(Z_1,Z_2)$ has distribution $\pi$, then $C$ can be explicitly written as $C = \mathbb{E}[Z_1 Z_2^T]$. We will make use of notation \eqref{eq:notation_crosscov} to denote an admissible covariance matrix parametrized by its cross-covariance matrix.

\subsection{The Kullback-Leibler divergence as a regularizing term}\label{sec:gaussian_regularization}

Adding a Kullback-Leibler divergence penalty on the optimal transport problem requires some absolute continuity conditions to be satisfied. Assuming to work with full-rank covariance matrices simplifies the matter. 
In this section, we set a full-rank covariance matrix $\Sigma$ on the product space $\mathbb{R}^d \times \mathbb{R}^d$, and $\varepsilon > 0$ a parameter tuning the strength of the Kullback-Leibler divergence. With these two regularization parameters, the optimal transport problem we study is
\begin{equation}\label{eq:eot_gauss_refcpl}
  W_\Sigma^{\varepsilon}(\mu, \nu) =  \min_{\pi \in \Pi(\mu, \nu)} \int_{ \mathbb{R}^d \times  \mathbb{R}^d} \|x-y\|^2 d\pi(x,y) + 2 \varepsilon \KL(\pi|N_{2d}(\Sigma)).
\end{equation} 
The following lemma states that we can parametrize our problem through cross-covariance matrices. 

\begin{lem}\label{lem:regot_matrix}
Denote by $Y \in S_{2d}(\mathbb{R})$ the optimal transport matrix introduced in equation \eqref{eq:notation_otmatrix} and by $X_C$ an admissible covariance matrix as in equation \eqref{eq:notation_crosscov}. With these notations, the regularized optimal transport problem reduces to the minimization problem
    \begin{equation}\label{eq:regeot_matrix_problem}
        W_\Sigma^{\varepsilon}(\mu, \nu) = \min_{C \in \Pi^{+}(A,B)} \left\{\left\langle Y + \varepsilon \Sigma^{-1}, X_C
         \right\rangle_{\HS} - \varepsilon \log \det (X_C) \right\} + \varepsilon \log \det (\Sigma) - 2 \varepsilon   d.
    \end{equation}
\end{lem}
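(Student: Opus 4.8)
The plan is to combine the two reductions already established: Lemma~\ref{lem:linearot_gaussmat}, which shows that the transport cost $\int \|x-y\|^2\,d\pi$ depends only on the covariance of $\pi$ and equals $\langle Y, X_C\rangle_{\HS}$ when $\pi$ has covariance $X_C$; and Lemma~\ref{lem:divergence_covariance}, which lets us restrict attention to Gaussian couplings when the penalty term $\KL(\pi|N_{2d}(\Sigma))$ is present. The point of Lemma~\ref{lem:divergence_covariance} is that among all centered couplings $\pi$ sharing a fixed covariance $X_C$, the Gaussian one $N_{2d}(X_C)$ minimizes the KL term while leaving the transport cost unchanged; hence the infimum over $\Pi(\mu,\nu)$ is attained on the subfamily $\{N_{2d}(X_C) : C \in \Pi^+(A,B)\}$, and the problem becomes a minimization over admissible cross-covariances $C$.

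First I would fix $C \in \Pi^+(A,B)$ with $X_C$ of full rank (noting that the set of such $C$ is dense, and by Remark~\ref{rem:invert_blocks} the full-rank condition is exactly $\|R\|_{\op}<1$; I would remark that if $X_C$ is singular then $\KL(N_{2d}(X_C)|N_{2d}(\Sigma))=+\infty$, so such $C$ cannot be minimizers and can be excluded). For such $C$, apply Proposition~\ref{prop:kl_gaussian} to get
\begin{equation*}
  2\KL(N_{2d}(X_C)|N_{2d}(\Sigma)) = \tr(\Sigma^{-1}X_C - I_{2d}) - \log\det(\Sigma^{-1}X_C)
  = \langle \Sigma^{-1}, X_C\rangle_{\HS} - 2d - \log\det(X_C) + \log\det(\Sigma),
\end{equation*}
using $\tr(\Sigma^{-1}X_C) = \langle \Sigma^{-1}, X_C\rangle_{\HS}$ (both matrices symmetric) and $\log\det(\Sigma^{-1}X_C) = \log\det(X_C) - \log\det(\Sigma)$. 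Multiplying by $\varepsilon$ and adding the transport cost $\langle Y, X_C\rangle_{\HS}$ from Lemma~\ref{lem:linearot_gaussmat} gives, for the Gaussian coupling with covariance $X_C$, the objective value
\begin{equation*}
  \langle Y + \varepsilon\Sigma^{-1}, X_C\rangle_{\HS} - \varepsilon\log\det(X_C) + \varepsilon\log\det(\Sigma) - 2\varepsilon d,
\end{equation*}
which is exactly the expression inside the minimum in \eqref{eq:regeot_matrix_problem}, by linearity of $\langle\cdot,X_C\rangle_{\HS}$.

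To close the argument I would prove the two inequalities between $W_\Sigma^\varepsilon(\mu,\nu)$ and the matrix minimum. The inequality $\leq$ is immediate: each Gaussian coupling $N_{2d}(X_C)$ is in $\Pi(\mu,\nu)$ and achieves the displayed value, so the infimum over $\Pi(\mu,\nu)$ is no larger than the infimum over $C$. For $\geq$, take any $\pi \in \Pi(\mu,\nu)$ with finite objective; then $\pi$ is absolutely continuous w.r.t.\ $N_{2d}(\Sigma)$, hence centered (its marginals $\mu,\nu$ are centered) with some covariance $X_C$, $C\in\Pi^+(A,B)$; by Lemma~\ref{lem:linearot_gaussmat} the transport cost of $\pi$ equals $\langle Y, X_C\rangle_{\HS}$, and by Lemma~\ref{lem:divergence_covariance} $\KL(\pi|N_{2d}(\Sigma)) \geq \KL(N_{2d}(X_C)|N_{2d}(\Sigma))$, so $I_\Sigma^\varepsilon(\pi)$ is at least the value displayed above, which is at least the minimum over $C$. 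Finiteness of that minimum and attainment follow from the previous lemma (existence of a unique solution). The main obstacle, and the step deserving the most care, is the reduction to Gaussians: one must make sure Lemma~\ref{lem:divergence_covariance} applies (the competitor $\pi$ is centered because its marginals are) and that discarding singular $X_C$ is legitimate — i.e.\ that $\KL$ is $+\infty$ there — so that the $\min$ over $\Pi^+(A,B)$ can equivalently be taken over the full-rank subset where $\log\det(X_C)$ and $\Sigma^{-1}$ make sense; everything else is the bookkeeping of the Gaussian KL formula.
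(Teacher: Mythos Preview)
Your proof is correct and follows the same approach as the paper: compute the transport cost via Lemma~\ref{lem:linearot_gaussmat}, expand the KL term via Proposition~\ref{prop:kl_gaussian}, and combine. If anything, your argument is more complete than the paper's: the paper merely computes $I_\Sigma^\varepsilon$ on the Gaussian couplings $N_{2d}(X_C)$ and states the resulting formula, whereas you explicitly invoke Lemma~\ref{lem:divergence_covariance} to justify that the infimum over $\Pi(\mu,\nu)$ equals the infimum over Gaussian couplings, and you handle the singular-$X_C$ boundary case.
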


\begin{proof}
	In Lemma \ref{lem:linearot_gaussmat}, we have seen that any admissible coupling $\pi \in \Pi(\mu,\nu)$, has covariance matrix 
	\begin{equation*}
		X_C = \begin{pmatrix}
			A & C \\
			C^T & B
		\end{pmatrix}
	\end{equation*}
    with  $C \in M_d(\mathbb{R})$. In the same lemma, we have established that $N(X_C)$ is an admissible coupling has transport cost
    \begin{equation*}
        I(N(X_C)) = I(\pi) = \langle Y, X_C \rangle_{\rm HS}.
    \end{equation*}

We now turn to the penalty term. As the reference coupling in \eqref{eq:eot_gauss_refcpl} has been chosen Gaussian, we can still restrict to Gaussian coupling.
Indeed, from Lemma \ref{lem:optim_kl_gauss} we have 
\begin{equation*}
	\KL(N(X_C)|N(\Sigma)) \leq \KL(\pi|N(\Sigma)).
\end{equation*}
As $\pi$ has been chosen arbitrary we derive
\begin{equation*}
	\min_{C \in \Pi^{+}(A,B)} I(N(X_C))+2\varepsilon\KL(N(X_C)|N(\Sigma)) = \min_{\pi \in \Pi(\mu,\nu)} I(\pi) + 2\varepsilon \KL(\pi|N(\Sigma)) =: W_\Sigma^{\varepsilon}(\mu,\nu).
\end{equation*}
Next, exploiting Proposition \ref{prop:kl_gaussian}, we can rewrite 
    \begin{equation*}
        2\KL(N_{2d}(X_C), N_{2d}(\Sigma)) = \langle \Sigma^{-1}, X_C \rangle_{\rm HS} - \log \det(\Sigma^{-1} X_C) -2d.
    \end{equation*}
    Thus, the regularized transport loss can be rewritten
    \begin{equation*}
        I(N(X_C)) + 2\varepsilon \KL(N_{2d}(X_C)| N_{2d}(\Sigma)) = \langle Y+\varepsilon \Sigma^{-1}, X_C \rangle_{\rm HS} - \varepsilon\log \det(X_C) + \varepsilon\log \det(\Sigma) -2\varepsilon d.
    \end{equation*}
    Finally, this regularized optimal transport problem reads
    \begin{align*}
        W_\Sigma^\varepsilon(\mu, \nu) & = \min_{C \in \Pi^{+}(A,B)} \{ \langle Y+\varepsilon \Sigma^{-1}, X_C \rangle_{\rm HS} - \varepsilon\log \det(X_C) \} + \underbrace{\varepsilon\log \det(\Sigma) -2\varepsilon d}_{\text{independent of}~C},
    \end{align*}
    as claimed.
\end{proof}
Writing a coupling of $N(A)$ and $N(B)$ as depending of the cross-covariance $C$ only allows to remove the constraints of the problem. However, to exploit convex duality tools, it is convenient to keep in mind the constrained formulation of optimal transport. For this purpose, if $X \in S_{2d}^{+}(\mathbb{R})$ is a coupling covariance matrix, we use the block decomposition 
	\begin{equation*}
		X = \begin{pmatrix}
			X_{11} & X_{12} \\
			X_{12}^T & X_{22}
 		\end{pmatrix},
	\end{equation*}
	where all blocks have same dimension $d\times d$. With this notation, we can rewrite the matrix reduction \eqref{eq:regeot_matrix_problem} of entropic optimal transport \eqref{eq:gaussian_kl_ot} as an optimization problem with equality constraints:
	\begin{equation}\label{eq:block_constrained_primal}
		W_{\Sigma}^{\varepsilon}(\mu, \nu) = \min_{X \in S_{2d}^+(\mathbb{R})}\langle Y + \varepsilon \Sigma^{-1}, X \rangle_{\rm HS} -  \varepsilon \log \det X \quad \text{such that} \quad X_{11} = A,~ X_{22} = B, 
	\end{equation}
when forgetting about the additive constant $\varepsilon\log \det(\Sigma) -2\varepsilon d$.

\section{Closed form for arbitrary reference Gaussian measures}\label{sec:closed_form}
We may now leverage the matrix reduction of the entropic optimal transport \eqref{eq:gaussian_kl_ot} in order to deduce its solution.

\subsection{Primal problem approach}

From Lemma \ref{lem:regot_matrix}, the objective function to minimize is 
\begin{equation}\label{eq:objective_regularized_gauss}
    \begin{array}{cccc}
        I_\Sigma^\varepsilon :& \Pi^{+}(A,B) & \rightarrow & \mathbb{R} \cup \{+ \infty\} \\
         & C & \mapsto & \left\langle Y + \varepsilon \Sigma^{-1}, X_C
         \right\rangle_{\HS} - \varepsilon \log \det (X_C),
    \end{array}
\end{equation}
where $\varepsilon>0$. We begin by showing that the search space $\Pi^+(A,B)$ can be reduced.

\begin{lem}\label{lem:convexity_primal_obj}
    The objective function $I_\Sigma^\varepsilon$ introduced in equation \eqref{eq:objective_regularized_gauss} reaches its minimum on the set 
    \begin{equation}\label{eq:constraint_positive_mat}
    	\Pi^{++}(A,B) := \left\{ C \in M_d(\mathbb{R}) ~|~ X_C := \begin{pmatrix}
    		A & C \\
    		C^T& B
    	\end{pmatrix} > 0 \right\}.
    \end{equation} On this set $\Pi^{++}(A,B)$, the objective function $I_{\Sigma}^{\varepsilon}$ is strictly convex.
\end{lem}

\begin{proof}
We begin by showing that we can restrict to positive-definite covariance. For every $C \in \Pi^{+}(A,B)$ such that $X_C$ is positive-semidefinite but not positive-definite, $\det(X_C) = 0$. This implies that the objective function \eqref{eq:objective_regularized_gauss} takes value $+\infty$ when computed at such points $C$. Taking $C = 0$, as $A$ and $B$ are full rank, we have that $X_0 = \diag(A,B)$ is positive definite. This observation shows that the objective function at $X_0$ is such that $I_\Sigma^{\varepsilon}(X_0) < +\infty$. Hence, if a minimum of our objective function is reached, it is on the subset of $\Pi^+(A,B)$ of positive-definite matrices. This is precisely the set $\Pi^{++}(A,B)$ introduced in equation \eqref{eq:constraint_positive_mat}.\\
	
Regarding the strict convexity, we point out that the objective function $I_{\Sigma}^{\varepsilon}$ that maps $C$ to $\left\langle Y + \varepsilon \Sigma^{-1}, X_C
\right\rangle_{\HS} - \varepsilon \log \det (X_C)$ can be written as the composition of two functions. Specifically, $I_{\Sigma}^{\varepsilon} = \ell \circ f$, where $f$ is defined for every $C \in \Pi^{++}(A,B)$ by 
\begin{equation*}
	f(C) := X_C = \begin{pmatrix}
		A & C \\
		C^T & B
	\end{pmatrix}, 
\end{equation*}
and for $X \in S_{2d}^{++}(\mathbb{R})$, $\ell(X) := \left\langle Y + \varepsilon \Sigma^{-1}, X
\right\rangle_{\HS} - \varepsilon \log \det (X)$. Now, the function $\log \det$ is strictly concave on $S_{2d}^{++}(\mathbb{R})$ (e.g. \citep[p.~42,~Cor.~1.4.2]{bakonyi2011matrix}). From this we deduce that $\ell$ is strictly convex on $S_{2d}^{++}(\mathbb{R})$. Setting $t \in (0,1)$ and $C_0, C_1 \in \Pi^{++}(A,B)$ such that $C_0 \neq C_1$, we observe that 
\begin{align*}
	f((1-t)C_0 + t C_1) & = \begin{pmatrix}
		A & (1-t)C_0 + t C_1 \\
		(1-t)C_0^T + t C_1^T & B
	\end{pmatrix} \\
& =  (1-t) \begin{pmatrix}
	A & C_0 \\
	C_0^T & B
\end{pmatrix} + t \begin{pmatrix}
A & C_1 \\
C_1^T & B
\end{pmatrix} \\
& = (1-t)f(C_0) + tf(C_1).
\end{align*}
This last computation enables us to write 
\begin{align*}
	I_{\Sigma}^{\varepsilon}((1-t)C_0 + t C_1) & = \ell \circ f((1-t)C_0 + t C_1) \\
	& = \ell((1-t)f(C_0) + tf(C_1)) \\
	& < (1-t) \ell(f(C_0)) + t \ell(f(C_1)) \\
	& =  (1-t) 	I_{\Sigma}^{\varepsilon}(C_0) + t 	I_{\Sigma}^{\varepsilon}(C_1),
\end{align*}
where the inequality comes from the strict convexity of $\ell$. This shows the strict convexity of the objective function on $\Pi^{++}(A,B)$.
\end{proof}

From Lemma \ref{lem:convexity_primal_obj}, we will be able to detect the solution of our problem through the study of its critical point. We study the first variation of objective function \eqref{eq:objective_regularized_gauss}. For this purpose, we introduce the matrix $\Gamma \in S_{2d}^{++}(\mathbb{R})$ to denote the inverse of the reference covariance matrix $\Sigma$. Thus, from now on
\begin{equation}\label{eq:inverse_prior}
	\Gamma := \Sigma^{-1}, \quad \text{and we use the block formulation} \quad \Gamma =  \begin{pmatrix}
		\Gamma_{11} & \Gamma_{12} \\
		\Gamma_{12}^T& \Gamma_{22}
	\end{pmatrix},
\end{equation}
where the blocks $\Gamma_{11}, \Gamma_{12}$ and $\Gamma_{22}$ are all $d \times d$ matrices. In what comes next, we will have a particular interest for the off-diagonal block $\Gamma_{12}$ and more precisely for the matrix $I_d - \varepsilon \Gamma_{12}$. As this matrix will appear often throughout, we will denote it by 
\begin{equation}\label{eq:m_eps_matrix}
	M_\varepsilon := I_d - \varepsilon \Gamma_{12}.
\end{equation}

In classic entropic optimal transport, that is when the reference measure is the product of the input measures $\Sigma = \diag(A, B)$. This implies that $\Gamma_{12} = 0$, so that $M_\varepsilon$ reduces to the identity matrix $I_d$. In our work, not having access to this reduction adds an extra layer of technicalities.

\begin{prop}\label{prop:gradient_primal}
    The objective function \eqref{eq:objective_regularized_gauss} is differentiable at every $C \in M_d(\mathbb{R})$ such that $X_C$ is positive-definite. Furthermore, the gradient at $C \in M_d(\mathbb{R})$ such that $X_C$ is positive definite is given by the formula
    \begin{equation}\label{eq:gradient_primal}
        \nabla I_{\Sigma}^\varepsilon(C) =  2(\varepsilon A^{-1} C (B- C^T A^{-1}C)^{-1}- M_\varepsilon),
    \end{equation}
    where $M_{\varepsilon}=I_d - \varepsilon \Gamma_{12}$ as per definition \eqref{eq:m_eps_matrix}.
\end{prop}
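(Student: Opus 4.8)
The plan is to compute the first variation of $I_\Sigma^\varepsilon$ directly from its decomposition $I_\Sigma^\varepsilon = \ell\circ f$ used in the previous lemma, where $f(C) = X_C$ is affine in $C$ and $\ell(X) = \langle Y+\varepsilon\Sigma^{-1}, X\rangle_{\HS} - \varepsilon\log\det X$. Since $f$ is affine with linear part $h\mapsto \begin{pmatrix} 0 & h \\ h^T & 0\end{pmatrix}$, the chain rule gives, for $X_C$ positive-definite (so that $\log\det$ is smooth near $X_C$),
\begin{equation*}
	\langle \nabla I_\Sigma^\varepsilon(C), h\rangle_{\HS} = \left\langle \nabla\ell(X_C), \begin{pmatrix} 0 & h \\ h^T & 0\end{pmatrix}\right\rangle_{\HS},
\end{equation*}
and I would use the standard matrix-calculus facts $\nabla_X \langle Z, X\rangle_{\HS} = Z$ (for symmetric $Z$) and $\nabla_X \log\det X = X^{-1}$ to get $\nabla\ell(X_C) = Y + \varepsilon\Sigma^{-1} - \varepsilon X_C^{-1}$. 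Differentiability at every $C$ with $X_C>0$ follows at once, since $X\mapsto\log\det X$ is $C^\infty$ on $S_{2d}^{++}(\mathbb{R})$ and $f$ is affine.

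The remaining work is to extract the $(1,2)$-block (and its transpose) of $Y+\varepsilon\Sigma^{-1}-\varepsilon X_C^{-1}$, because pairing a symmetric $2d\times 2d$ matrix $Z=\begin{pmatrix}Z_{11}&Z_{12}\\ Z_{12}^T&Z_{22}\end{pmatrix}$ against $\begin{pmatrix}0&h\\h^T&0\end{pmatrix}$ yields $\langle Z_{12},h\rangle_{\HS}+\langle Z_{12}^T,h^T\rangle_{\HS} = 2\langle Z_{12},h\rangle_{\HS}$, so that $\nabla I_\Sigma^\varepsilon(C) = 2\,[\,Y+\varepsilon\Sigma^{-1}-\varepsilon X_C^{-1}\,]_{12}$. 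The $(1,2)$-block of $Y$ is $-I_d$; the $(1,2)$-block of $\varepsilon\Sigma^{-1}=\varepsilon\Gamma$ is $\varepsilon\Gamma_{12}$; so the first two contribute $-I_d+\varepsilon\Gamma_{12} = -M_\varepsilon$. Hence I must identify the $(1,2)$-block of $X_C^{-1}$ and check it equals $-A^{-1}C(B-C^TA^{-1}C)^{-1}$.

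The main obstacle — really the only nontrivial computation — is this block-inverse step. I would invoke the block-matrix inversion formula (Schur complement) for $X_C = \begin{pmatrix}A & C\\ C^T & B\end{pmatrix}$ with $A$ invertible: writing $S := B - C^TA^{-1}C$ for the Schur complement (which is positive-definite precisely because $X_C>0$, by Remark \ref{rem:invert_blocks}), the standard formula gives
\begin{equation*}
	X_C^{-1} = \begin{pmatrix} A^{-1}+A^{-1}CS^{-1}C^TA^{-1} & -A^{-1}CS^{-1} \\ -S^{-1}C^TA^{-1} & S^{-1}\end{pmatrix},
\end{equation*}
so $[X_C^{-1}]_{12} = -A^{-1}CS^{-1} = -A^{-1}C(B-C^TA^{-1}C)^{-1}$. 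Substituting, $\nabla I_\Sigma^\varepsilon(C) = 2\big(-M_\varepsilon - \varepsilon\,[X_C^{-1}]_{12}\big) = 2\big(\varepsilon A^{-1}C(B-C^TA^{-1}C)^{-1} - M_\varepsilon\big)$, which is exactly \eqref{eq:gradient_primal}. I should be slightly careful to state the block-inversion identity (it is an elementary verification by multiplying out, or can be cited from a matrix-analysis reference such as \cite{bakonyi2011matrix}), and to note that the map $C\mapsto X_C^{-1}$ is well-defined and smooth on $\Pi^{++}(A,B)$ since matrix inversion is smooth on the invertibles.
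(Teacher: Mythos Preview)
Your proposal is correct and follows essentially the same approach as the paper: both compute the directional derivative by using that $\nabla_X\log\det X = X^{-1}$, pair with the perturbation $\begin{pmatrix}0&H\\H^T&0\end{pmatrix}$ to isolate the off-diagonal block, and then invoke the block-inversion (Schur complement) formula to identify $[X_C^{-1}]_{12} = -A^{-1}C(B-C^TA^{-1}C)^{-1}$. The only cosmetic difference is that you package the linear and $\log\det$ terms together via $\nabla\ell(X_C)=Y+\varepsilon\Sigma^{-1}-\varepsilon X_C^{-1}$ and then read off the $(1,2)$-block in one stroke, whereas the paper treats the two terms separately before combining; the substance is identical.
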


\begin{proof}
The objective function $I_\Sigma^\varepsilon$ is the sum of a linear term denoted by $L$ and the $\log\det$ function. We first compute the gradient of the linear term defined by $L(C) := \langle Y + \varepsilon \Sigma^{-1}, X_C \rangle$. Set $C \in \Pi^{++}(A,B)$. For $H  \in M_d(\mathbb{R})$ sufficiently small so that $X_{C+H}$ is positive-definite, and using the notation $M_\varepsilon = I_d - \varepsilon \Gamma_{12}$ we can write
    \begin{align*}
        L(C+H) &=  \left\langle Y + \varepsilon \Sigma^{-1}, \begin{pmatrix}
        A & C + H \\
        C^T+ H^T& B
    \end{pmatrix}\right \rangle_{\HS}  \\
    & = \left\langle  \begin{pmatrix}
    	I_d & -I_d \\
    	-I_d & I_d
    \end{pmatrix} + \varepsilon  \begin{pmatrix}
    \Gamma_{11} & \Gamma_{12} \\
    \Gamma_{12}^T& \Gamma_{22}
\end{pmatrix}, \begin{pmatrix}
        A & C \\
        C^T& B
    \end{pmatrix} + \begin{pmatrix}
        0 &  H \\
         H^T& 0
    \end{pmatrix}\right \rangle_{\HS}  \\
	    & = \left\langle  \begin{pmatrix}
		I_d + \varepsilon \Gamma_{11} & -M_\varepsilon \\
		-M_\varepsilon^T& I_d + \varepsilon \Gamma_{22}
	\end{pmatrix}, \begin{pmatrix}
		A & C \\
		C^T& B
	\end{pmatrix} + \begin{pmatrix}
		0 &  H \\
		H^T& 0
	\end{pmatrix}\right \rangle _{\HS}  \\
    & = L(C) -\langle  2 M_{\varepsilon}, H \rangle_{\HS} .
    \end{align*}
From this computation we deduce that the linear part of objective function has gradient ${\nabla L(C) = -2M_\varepsilon}$. We now compute the gradient of the $\log\det$ function at $C$. 
\begin{align*}
    \log \det(X_{C+H}) & = \log \det \left(\begin{pmatrix}
        A & C \\
        C^T& B
    \end{pmatrix} + \begin{pmatrix}
        0 &  H \\
         H^T& 0
    \end{pmatrix} \right) \\
    & = \log \det \begin{pmatrix}
        A & C \\
        C^T& B
    \end{pmatrix} + \left\langle \begin{pmatrix}
        A & C \\
        C^T& B
    \end{pmatrix}^{-1},  \begin{pmatrix}
        0 &  H \\
         H^T& 0
    \end{pmatrix} \right\rangle_{\HS}  + o(H).
\end{align*}
To derive the last equality, we have used that the gradient of the $\log \det $ function at a matrix $X \in S_{2d}^{++}(\mathbb{R})$ is $X^{-1}$ (see e.g. \cite[p.~641]{boyd2004convex}). We now exploit the formula for computing the inverse of a block matrix. For this purpose we introduce the Schur complement defined by $S := B- C^TA^{-1}C$ and write 
\begin{align*}
    \left\langle \begin{pmatrix}
        A & C \\
        C^T& B
    \end{pmatrix}^{-1},  \begin{pmatrix}
        0 &  H \\
         H^T& 0
    \end{pmatrix} \right\rangle_{\HS} & =   \left\langle \begin{pmatrix}
        (-) & -A^{-1}CS^{-1} \\
        -S^{-1}C^TA^{-1} & (-)
    \end{pmatrix},  \begin{pmatrix}
        0 &  H \\
         H^T& 0
    \end{pmatrix} \right\rangle_{\HS} \\
    & = \langle -2A^{-1}C S^{-1}, H \rangle_{\HS}.
\end{align*}
This last computation shows that $\nabla \log \det(X_C) = -2A^{-1}C S^{-1}$.
Note that we did not need to compute the  off-diagonal blocks of the matrix $X_C^{-1}$ for the last computation. Collecting the pieces, we deduce that the gradient of our objective function is given by
\begin{equation}
    \nabla I_\Sigma^\varepsilon(C) = 2(\varepsilon A^{-1}C ( B- C^TA^{-1}C)^{-1}-M_\varepsilon).
\end{equation}
\end{proof}

We have computed the gradient of the objective function $I_\Sigma^{\varepsilon}$ in Proposition \ref{prop:gradient_primal}, and now aim to solve the equation $\nabla I_\Sigma^\varepsilon(C) =0$. To solve this equation, we need the matrix $ M_\varepsilon :=  I_d - \varepsilon \Gamma_{12}$ to be invertible. As of now, we take for granted that this assumption is true.
\begin{hyp}[Invertibility of $M_\varepsilon$]\label{hyp:invertible_mat}
	The matrix $\Sigma \in S_{2d}^{++}(\mathbb{R})$ and the parameter $\varepsilon > 0$ are chosen such that the matrix $M_\varepsilon$ introduced in equation \eqref{eq:m_eps_matrix} is invertible.
\end{hyp}
In Section \ref{sec:inv_meps} we return to the study of Assumption \ref{hyp:invertible_mat} and show that invertibility \emph{generically} holds true. More precisely, in Lemma \ref{lem:meps_invertible_surely}, we will establish  $M_{\varepsilon}$ is invertible for almost all $\varepsilon$ (i.e. except on a set of probability zero).
We now give our main result: the explicit solution to the entropic Gaussian optimal transport problem when the reference coupling is a Gaussian measure on the product space $\mathbb{R}^d \times \mathbb{R}^d$.

\begin{thm}\label{thm:closed_form_gauss}
    Let $\mu=N_d(A)$ and $\nu = N_d(B)$ be two centered Gaussian measures with non-singular covariance matrices $A$ and $B$. Assume that the reference coupling is the Gaussian measure $N_{2d}(\Sigma)$ on $\mathbb{R}^{2d}$, where $\Sigma$ is a full-rank covariance matrix having inverse $\Gamma := \Sigma^{-1} \in S_{2d}^{++}(\mathbb{R})$. Then, for all $\varepsilon > 0$ so that Assumption \eqref{hyp:invertible_mat} holds, the regularized optimal transport problem \eqref{eq:gaussian_kl_ot} has a unique solution given by the Gaussian measure
    \begin{equation}
    \pi_\varepsilon^{\star} := N_{2d}\begin{pmatrix}
        A & C_\varepsilon\\
        C_\varepsilon^T& B
    \end{pmatrix},
    \end{equation}
    where the cross-covariance matrix $C_\varepsilon$ is given by the formula
    \begin{equation}
        C_\varepsilon =\left[\left( AM_\varepsilon BM_\varepsilon^T +\frac{\varepsilon^2}{4}I_d\right)^{1/2} - \frac{\varepsilon}{2}I_d \right](M_\varepsilon^T)^{-1},
    \end{equation}
with $M_\varepsilon=I_d - \varepsilon \Gamma_{12}$ defined in equation \eqref{eq:m_eps_matrix}.
\end{thm}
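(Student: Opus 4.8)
The plan is to reduce the statement to solving a single matrix equation. Since the regularized problem has a unique solution (established by the existence/uniqueness lemma above) and, via the matrix reduction of Lemma~\ref{lem:regot_matrix} together with Remark~\ref{rem:restriction_primal}, this solution is a Gaussian $N_{2d}(X_{C_\varepsilon})$ whose cross-covariance $C_\varepsilon$ is the unique minimizer of the strictly convex differentiable map $I_\Sigma^\varepsilon$ over $\Pi^{++}(A,B)$, it suffices to find the unique $C$ with $X_C>0$ solving $\nabla I_\Sigma^\varepsilon(C)=0$. By Proposition~\ref{prop:gradient_primal} this stationarity condition reads $\varepsilon A^{-1}C\,(B-C^TA^{-1}C)^{-1}=M_\varepsilon$. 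Introducing the Schur complement $S:=B-C^TA^{-1}C$ (symmetric and, at the minimizer, positive-definite), the condition becomes $C=\varepsilon^{-1}AM_\varepsilon S$; substituting this back into the definition of $S$ and simplifying $C^TA^{-1}C=\varepsilon^{-2}S M_\varepsilon^T A M_\varepsilon S$ produces the matrix equation
\[
S\,(M_\varepsilon^T A M_\varepsilon)\,S+\varepsilon^2 S=\varepsilon^2 B .
\]

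The core of the argument is to make this (Riccati-type, non-symmetric) equation solvable. Writing $K:=M_\varepsilon^T A M_\varepsilon$, which is positive-definite because Assumption~\ref{hyp:invertible_mat} makes $M_\varepsilon$ invertible, I would conjugate by $K^{1/2}$ and set $T:=K^{1/2}SK^{1/2}$; the equation then becomes the genuinely symmetric quadratic $T^2+\varepsilon^2 T=\varepsilon^2 K^{1/2}BK^{1/2}$. Completing the square gives $\big(T+\tfrac{\varepsilon^2}{2}I_d\big)^2=\varepsilon^2 K^{1/2}BK^{1/2}+\tfrac{\varepsilon^4}{4}I_d$, and since $S>0$ forces $T>0$, the matrix $T+\tfrac{\varepsilon^2}{2}I_d$ must be the positive-definite square root, so $T=\varepsilon\big(K^{1/2}BK^{1/2}+\tfrac{\varepsilon^2}{4}I_d\big)^{1/2}-\tfrac{\varepsilon^2}{2}I_d$. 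Unwinding, $S=K^{-1/2}TK^{-1/2}$ and $C_\varepsilon=\varepsilon^{-1}AM_\varepsilon K^{-1/2}TK^{-1/2}$.

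It then remains to cast this into the stated closed form, which I would do with the polar decomposition $A^{1/2}M_\varepsilon=UK^{1/2}$ ($U$ orthogonal), valid since $A^{1/2}M_\varepsilon$ is invertible with $(A^{1/2}M_\varepsilon)^T(A^{1/2}M_\varepsilon)=K$. This yields $AM_\varepsilon K^{-1/2}=A^{1/2}U$ and $(M_\varepsilon^T)^{-1}=A^{1/2}UK^{-1/2}$. Distributing the two summands of $\varepsilon^{-1}T$: the $-\tfrac{\varepsilon}{2}I_d$ contribution becomes $-\tfrac{\varepsilon}{2}A^{1/2}UK^{-1/2}=-\tfrac{\varepsilon}{2}(M_\varepsilon^T)^{-1}$, while for the remaining term one checks $AM_\varepsilon B M_\varepsilon^T+\tfrac{\varepsilon^2}{4}I_d=(A^{1/2}U)\big(K^{1/2}BK^{1/2}+\tfrac{\varepsilon^2}{4}I_d\big)(A^{1/2}U)^{-1}$, so that (taking the square root in the sense of \eqref{eq:square_root_product}, conjugate to a symmetric positive-definite matrix) $\big(AM_\varepsilon B M_\varepsilon^T+\tfrac{\varepsilon^2}{4}I_d\big)^{1/2}=(A^{1/2}U)\big(K^{1/2}BK^{1/2}+\tfrac{\varepsilon^2}{4}I_d\big)^{1/2}(A^{1/2}U)^{-1}$, and multiplying on the right by $(M_\varepsilon^T)^{-1}$ recovers the first term. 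Collecting the pieces gives exactly $C_\varepsilon=\big[\big(AM_\varepsilon B M_\varepsilon^T+\tfrac{\varepsilon^2}{4}I_d\big)^{1/2}-\tfrac{\varepsilon}{2}I_d\big](M_\varepsilon^T)^{-1}$; and since $S=K^{-1/2}TK^{-1/2}>0$, the Schur complement of $X_{C_\varepsilon}$ is positive-definite, confirming $C_\varepsilon\in\Pi^{++}(A,B)$, so this stationary point is the sought minimizer.

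The step I expect to be the real obstacle is the passage from the nonlinear stationarity condition to something one can solve in closed form: the equation for $S$ is not symmetric and does not obviously simultaneously diagonalize, and the maneuver that unlocks it is the $K^{1/2}$-conjugation producing a symmetric quadratic amenable to completing the square. A secondary, more bookkeeping-type difficulty is propagating the non-symmetric matrix square root through the polar-decomposition identities to land precisely on the stated formula, together with justifying that positivity of the Schur complement is what pins down the correct branch of the square root.
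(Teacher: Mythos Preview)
Your proof is correct and arrives at the same formula, but the algebraic path differs from the paper's. The paper manipulates the stationarity condition into a quadratic $W^2+\varepsilon W=AM_\varepsilon BM_\varepsilon^T$ in the variable $W=CM_\varepsilon^T$; the key step there is to first prove that $Z=A^{-1}CM_\varepsilon^T$ is symmetric (by comparing the equation with its transpose), and then to diagonalize $W$ via $P=UA^{-1/2}$, reducing to $d$ scalar quadratics. You instead work directly with the Schur complement $S$, which is symmetric by construction, derive $SKS+\varepsilon^2 S=\varepsilon^2 B$ with $K=M_\varepsilon^TAM_\varepsilon$, and conjugate by $K^{1/2}$ to obtain a genuinely symmetric quadratic in $T=K^{1/2}SK^{1/2}$ that can be solved by completing the square. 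Your route avoids having to discover the hidden symmetry of $Z$ and replaces the eigenvalue-by-eigenvalue analysis by a single matrix square root; the price is the polar-decomposition bookkeeping at the end to land on the stated closed form, which the paper's choice of variable $W=CM_\varepsilon^T$ sidesteps. Both arguments select the correct branch of the square root by invoking positivity of the Schur complement. (One cosmetic remark: the equation $SKS+\varepsilon^2 S=\varepsilon^2 B$ is already symmetric on both sides, so calling it ``non-symmetric'' is a slight misnomer; what you mean, and correctly exploit, is that $S$, $K$, $B$ do not commute.)
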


\begin{rem}
	As the matrix $AM_\varepsilon B M_{\varepsilon}^T$ is not necessarily symmetric, the matrix $( AM_\varepsilon BM_\varepsilon^T +\varepsilon^2/4I_d)^{1/2}$ is defined by the formula
	\begin{equation}\label{eq:square_root_mat_regularized}
		\left(AM_\varepsilon BM_\varepsilon^T +\frac{\varepsilon^2}{4}I_d\right)^{1/2} := A^{1/2}\left( A^{1/2}M_{\varepsilon}BM_{\varepsilon}^T A^{1/2} +\frac{\varepsilon^2}{4}I_d\right)^{1/2}A^{-1/2}.
	\end{equation}
	Note that it matches the square root matrix of $AB$ introduced for the case $\varepsilon=0$ in equation \eqref{eq:square_root_product}.
\end{rem}

Before proving Theorem \ref{thm:closed_form_gauss}, we derive as a corollary the value of the entropic optimal transport cost $W_\Sigma^{\varepsilon}(\mu, \nu)$.
\begin{cor}\label{cor:closed_form_gauss}
If $\mu=N_d(A)$ and $\nu = N_d(B)$ are two centered Gaussian measures with non singular covariance matrices, the entropic optimal transport cost has the closed form expression
    \begin{align}
    W_\Sigma^{\varepsilon}(\mu, \nu)& = \tr(A) + \tr(B) - 2 \tr\left( \left(AM_\varepsilon B M_\varepsilon^T+ \frac{\varepsilon^2}{4}I_d \right)^{1/2} \right) + \varepsilon \log \det\left( \left(A M_\varepsilon B M_\varepsilon^T+ \frac{\varepsilon^2}{4}I_d\right)^{1/2}+\frac{\varepsilon}{2}I_d  \right)  \nonumber \\
   &+\varepsilon \tr (\Gamma_{11} A )  +\varepsilon \tr (\Gamma_{22} B) - \varepsilon \log \det(AB) - \varepsilon d - \varepsilon d \log(\varepsilon) - \varepsilon\log \det(\Sigma^{-1}).
    \end{align}
\end{cor}

Reassuringly, in the specific case where $\Sigma$ is chosen to be a block diagonal matrix, we recover known results.

\begin{rem}[Product measure as reference] In classic entropic optimal transport, the reference measure is $\mu \otimes \nu$, which has covariance $\diag(A,B)$. In this case, we recover the solution to classic entropic optimal transport between Gaussian measures. Indeed, when $\Gamma_{12} = 0$  the cross-covariance $C_\varepsilon$ defined in Theorem \ref{thm:closed_form_gauss} and solution of the entropic problem is 
	\begin{equation}
		C_\varepsilon = \left[\left(AB + \frac{\varepsilon^2}{4}I_d\right)^{1/2}- \frac{\varepsilon}{2}I_d\right].
	\end{equation}
And as the reference matrix  is $\diag(A,B)$, it follows that $\Gamma_{11} = A^{-1}$ and $\Gamma_{22} = B^{-1}$. In such a case, the entropic optimal transport cost given in Corollary \ref{cor:closed_form_gauss} reads
\begin{align}
	W_\otimes^{\varepsilon}(\mu, \nu)& = \tr(A) + \tr(B) - 2 \tr\left( \left(A  B + \frac{\varepsilon^2}{4}I_d \right)^{1/2} \right) + \varepsilon \log \det\left( \left(A B + \frac{\varepsilon^2}{4}I_d\right)^{1/2}+\frac{\varepsilon}{2}I_d  \right)  \nonumber \\
	&+\varepsilon d (1 -  \log(\varepsilon) ).
\end{align} 
Thus, we recover the results established in \cite{janati2020entropic} and in \cite{mallastoEntropyregularized2WassersteinDistance2022}.
\end{rem}

We now prove Theorem \ref{thm:closed_form_gauss}.

\begin{proof}
We aim to solve the gradient equation $\nabla I_{\Sigma}^\varepsilon(C)= 0$, where the gradient of the objective function is computed in Proposition \ref{prop:gradient_primal}. This equation is equivalent to
    \begin{align*}
        \varepsilon A^{-1}C S^{-1}- M_{\varepsilon}= 0 \quad & \Leftrightarrow \varepsilon A^{-1}C S^{-1} = M_{\varepsilon}\\
       & \Leftrightarrow \varepsilon C = A M_{\varepsilon} S  \\
       & \Leftrightarrow \varepsilon C = A M_{\varepsilon}(B - C^TA^{-1}C)  \\
        & \Leftrightarrow  A M_{\varepsilon} C^TA^{-1}C + \varepsilon C - AM_{\varepsilon}B=0 \\
       &\Leftrightarrow  M_{\varepsilon} C^T(A^{-1}CM_{\varepsilon}^T) + \varepsilon A^{-1}CM_{\varepsilon}^T- M_{\varepsilon}BM_{\varepsilon}^T=0 \\
       & \Leftrightarrow  (A^{-1}C M_{\varepsilon}^T)^TA (A^{-1}C M_{\varepsilon}) +\varepsilon A^{-1}CM_{\varepsilon}^T- M_{\varepsilon}B M_{\varepsilon}^T=0.
    \end{align*}
    Introducing the notation $Z=A^{-1}CM_{\varepsilon}^T$, we can rewrite the last matrix equation
    \begin{equation}
        Z^TAZ + \varepsilon Z - M_{\varepsilon}BM_{\varepsilon}^T=0.
    \end{equation}
    Taking the transpose of this new equation, and exploiting that $A$ and $B$ are symmetric matrices, we derive
    \begin{equation}
        Z^TAZ + \varepsilon Z^T- M_{\varepsilon}BM_{\varepsilon}^T=0.
    \end{equation}
    Combining these last two equations implies that $Z = Z^T$. As $Z = A^{-1}CM_{\varepsilon}^T$, a matrix $C$ solution of the equation $\nabla I_{\Sigma}^\varepsilon(C)=0$ is such that $M_{\varepsilon}C^TA^{-1}=A^{-1}C M_{\varepsilon}^T$. Using this relation, we can rewrite the equation $\nabla I_{\Sigma}^\varepsilon(C)=0$ as 
    \begin{align*}
        M_{\varepsilon}C^T(A^{-1}C M_{\varepsilon}^T) +\varepsilon A^{-1}CM_{\varepsilon}^T- M_{\varepsilon}BM_{\varepsilon}^T=0 & \Leftrightarrow (M_{\varepsilon}C^TA^{-1})CM_{\varepsilon}^T+\varepsilon A^{-1}CM_{\varepsilon}^T- M_{\varepsilon}BM_{\varepsilon}^T=0 \\
        & \Leftrightarrow A^{-1}CM_\varepsilon^T CM_{\varepsilon}^T+\varepsilon A^{-1}CM_{\varepsilon}^T- M_{\varepsilon}BM_\varepsilon^T=0 \\
        & \Leftrightarrow CM_{\varepsilon}^TCM_{\varepsilon}^T+ \varepsilon CM_{\varepsilon}^T- AM_{\varepsilon}BM_{\varepsilon}^T= 0.
    \end{align*}
After introducing the matrix $W = CM_{\varepsilon}^T$, we reach the equation
\begin{equation}\label{eq:mat_eq_primal}
 W^2 + \varepsilon W - AM_{\varepsilon}BM_{\varepsilon}^T= 0.
\end{equation}
A similar matrix equation was studied in \cite[Prop.~3]{janati2020entropic}. We adapt their analysis to solve \eqref{eq:mat_eq_primal}. First, we rewrite $CM_{\varepsilon}^T= A(A^{-1}CM_{\varepsilon}^T)$. We have noticed that if $C$ is solution of the equation $\nabla I^{\varepsilon}(C) = 0$, then $A^{-1}CM_{\varepsilon}^T$ is symmetric. Exploiting this observation, we rewrite $CM_{\varepsilon}^T$ as
\begin{align*}
    CM_{\varepsilon}^T& = A (A^{-1}CM_{\varepsilon}^T)\\
    & = A^{1/2}( A^{1/2} (A^{-1}CM_{\varepsilon}^T) A^{1/2})A^{-1/2}.
\end{align*}
As $A^{1/2} (A^{-1}CM_{\varepsilon}^T) A^{1/2}$ is symmetric, there exists $U$ orthogonal and $D$ diagonal such that 
\begin{equation*}
A^{1/2} (A^{-1}CM_{\varepsilon}^T) A^{1/2} = U^T D U.
\end{equation*}
Introducing the change of basis matrix $P = UA^{-1/2}$ we can finally write 
\begin{equation*}
    CM_{\varepsilon}^T= P^{-1}D P.
\end{equation*}
Plugging this expression in equation \eqref{eq:mat_eq_primal}, and introducing $R$ the matrix corresponding to $AM_{\varepsilon}B M_{\varepsilon}^T$ after change of bases with the matrix $P$, yields
\begin{equation}\label{eq:matrix_equation_diagonal}
    P^{-1}D^2 P + \varepsilon P^{-1}D P - P^{-1}RP = 0.
\end{equation}
This last equation implies that the matrix $AM_{\varepsilon}B M_{\varepsilon}^T$ is diagonal in the same basis as $CM_\varepsilon$. Denoting by $r_i$ the diagonal coefficients of the matrix $R$, solving last matrix equation boils down to solving the $d$ quadractic real equations
\begin{equation}
    \delta_i^2 + \varepsilon \delta_i - r_i = 0,
\end{equation}
with respect to $\delta_i$. This equation has two solutions 
\begin{equation*}
    \delta_i^{-} = -\frac{\varepsilon}{2} - \sqrt{\frac{\varepsilon^2}{4} + r_i} \quad \text{and} \quad \delta_i^+ = -\frac{\varepsilon}{2} + \sqrt{\frac{\varepsilon^2}{4} + r_i}.
\end{equation*}
Now, recall that the $\delta_i$ are the coefficients of the diagonal matrix $D_\varepsilon$ related to $C$ through the equation $CM_{\varepsilon}^T= P^{-1}D_\varepsilon P$, and that $X_C = \begin{pmatrix}
    A & C \\
    C^T & B
\end{pmatrix}$ is a covariance matrix. The condition of $X_C$ being positive-definite implies that the coefficients of $D_\varepsilon$ are $\delta_i^{+} =  \sqrt{r_i+ \frac{\varepsilon^2}{4}} -\frac{\varepsilon}{2}$. Finally, exploiting the relation $C_\varepsilon : = P^{-1} D_{\varepsilon}P (M_{\varepsilon}^T)^{-1} := A^{1/2}U^T D_\varepsilon U A^{-1/2} (M_{\varepsilon}^T)^{-1}$ we derive 

\begin{equation*}
    C_\varepsilon  = \left[A^{1/2}\left( A^{1/2}M_{\varepsilon}BM_{\varepsilon}^T A^{1/2} +\frac{\varepsilon^2}{4}I_d\right)^{1/2}A^{-1/2} - \frac{\varepsilon}{2}I_d \right](M_{\varepsilon}^T)^{-1},
\end{equation*}
that we write for short
\begin{equation*}
	C_\varepsilon  = \left[\left(A M_{\varepsilon}BM_{\varepsilon}^T  +\frac{\varepsilon^2}{4}I_d\right)^{1/2} - \frac{\varepsilon}{2}I_d \right](M_{\varepsilon}^T)^{-1}.
\end{equation*}
To conclude, one can check that the matrix $C_\varepsilon$ previously defined is such that $\nabla I_{\Sigma}^{\varepsilon}(C_\varepsilon) = 0$. As $I_{\Sigma}^{\varepsilon}$ is strictly convex on $\Pi^{++}(A, B)$ and differentiable on this domain, $C_\varepsilon$ is the unique minimizer of the objective function $\Pi^{++}(A,B)$. From Lemma \ref{lem:convexity_primal_obj}, $C_\varepsilon$ is also the unique minimizer of $I_\Sigma^{\varepsilon}$ on the set of admissible cross-covariance matrices $\Pi^{+}(A,B)$.

\end{proof}

We now flesh out the computations for deriving Corollary \ref{cor:closed_form_gauss}.

\begin{proof}
    As 
    \begin{equation*}
        W_{\Sigma}^{\varepsilon}(\mu, \nu) = \min_{C \in \Pi^{+}(A,B)} \left\langle Y + \varepsilon \Sigma^{-1}, X_C
         \right\rangle_{\HS} - \varepsilon \log \det (X_C), 
    \end{equation*}
    and we have derived the expression of $C_\varepsilon$ solution of this minimization problem, we can write 
    \begin{align*}
    W_{\Sigma}^{\varepsilon}(\mu, \nu)& =  \left\langle Y + \varepsilon \Sigma^{-1}, X_{C_\varepsilon}
         \right\rangle_{\HS} - \varepsilon \log \det (X_{C_\varepsilon}) \\
         & = \left\langle \begin{pmatrix}
        I_d & -I_d \\
        -I_d & I_d
        \end{pmatrix} + \varepsilon  \begin{pmatrix}
        \Gamma_{11} & \Gamma_{12} \\
        \Gamma_{12}^T& \Gamma_{22}
        \end{pmatrix}, \begin{pmatrix}
        A & C_\varepsilon \\
        C_\varepsilon^T& B
        \end{pmatrix}  \right\rangle_{\HS}  - \varepsilon \log \det \begin{pmatrix}
        A & C_\varepsilon \\
        C_\varepsilon^T& B
        \end{pmatrix} .
    \end{align*}
We begin by the scalar product and exploit the expression of $C_\varepsilon$ in Theorem \ref{thm:closed_form_gauss}. Recalling that the Hilbert-Schmidt scalar product is defined by the trace of the matrix product, and the notation $M_{\varepsilon}=I_d-\varepsilon \Gamma_{12}$, we derive 
\begin{align*}
    \left\langle Y + \varepsilon \Sigma^{-1}, X_{C_\varepsilon}
         \right\rangle_{\HS} & = \langle I_d + \varepsilon \Gamma_{11},A\rangle_{\HS} + \langle I_d + \varepsilon \Gamma_{22},B \rangle_{\HS} -2\tr(M_{\varepsilon}^TC_\varepsilon)
\end{align*}
We now focus on the term $\tr(M_{\varepsilon}^TC_\varepsilon)$ in last equation, and rewrite it as
\begin{align*}
    \tr( M_{\varepsilon}^T C_\varepsilon) & = \tr(C_\varepsilon M_{\varepsilon}^T) \\
    & = \tr\left(\left( AM_{\varepsilon}BM_{\varepsilon}^T+\frac{\varepsilon^2}{4}I_d\right)^{1/2} - \frac{\varepsilon}{2}I_d   \right)\\
    & =  \tr\left(  \left( AM_{\varepsilon}BM_{\varepsilon}^T+\frac{\varepsilon^2}{4}I_d\right)^{1/2}  \right) - \varepsilon\frac{d}{2}.
\end{align*}
Thus, we can rewrite the scalar product as
\begin{align}
    \left\langle Y + \varepsilon \Sigma^{-1}, X_{C_\varepsilon}
         \right\rangle_{\HS} = \tr(A) + \tr(B)& + \varepsilon  \langle \Gamma_{11}, A \rangle_{\HS} + \varepsilon \langle \Gamma_{22}, B \rangle_{\HS} \nonumber\\
         &- 2\tr\left(  \left( AM_{\varepsilon}BM_{\varepsilon}^T+\frac{\varepsilon^2}{4}I_d\right)^{1/2}  \right) + \varepsilon d.
\end{align}
We then turn to the $\log$-determinant term. For this computation, we will use the identity
\begin{equation*}
    \left( AM_{\varepsilon}BM_{\varepsilon}^T+ \frac{\varepsilon^2}{4} I_d\right)^{1/2} = A^{1/2}\left(A^{1/2}M_{\varepsilon}B M_{\varepsilon}^TA^{1/2}+ \frac{\varepsilon^2}{4}I_d \right)^{1/2}A^{-1/2}.
\end{equation*}
Thus, we can write the cross covariance block $C_\varepsilon$ as follows:
\begin{equation*}
    C_\varepsilon = A^{1/2} \left[\left(A^{1/2}M_{\varepsilon}B M_{\varepsilon}^TA^{1/2}+ \frac{\varepsilon^2}{4}I_d \right)^{1/2}- \frac{\varepsilon}{2}I_d   \right]A^{-1/2}(M_{\varepsilon}^T)^{-1}.
\end{equation*}
We will also exploit the determinant formula
$
    \det(X_{C_\varepsilon}) = \det(A)\det(B - C_\varepsilon^TA^{-1} C_\varepsilon).
$ 
We begin by computing 
\begin{align*}
C_\varepsilon^TA^{-1} C_\varepsilon & = M_\varepsilon^{-1}A^{-1/2}\left[\left(A^{1/2}M_{\varepsilon}B M_{\varepsilon}^TA^{1/2}+ \frac{\varepsilon^2}{4}I_d \right)^{1/2}- \frac{\varepsilon}{2}I_d   \right]^{2}A^{-1/2} (M_{\varepsilon}^T)^{-1} \\
& = M_\varepsilon^{-1}A^{-1/2}\left[A^{1/2}M_{\varepsilon}B M_{\varepsilon}^TA^{1/2} - \varepsilon \left(A^{1/2}M_{\varepsilon}B M_{\varepsilon}^TA^{1/2}+ \frac{\varepsilon^2}{4}I_d \right)^{1/2}+ \frac{\varepsilon^2}{2}I_d   \right]A^{-1/2} (M_{\varepsilon}^T)^{-1} \\
& = B+ M_{\varepsilon}^{-1}A^{-1/2}\left[\frac{\varepsilon^2}{2}I_d - \varepsilon \left(A^{1/2}M_{\varepsilon}B M_{\varepsilon}^TA^{1/2}+ \frac{\varepsilon^2}{4}I_d \right)^{1/2} \right]A^{-1/2} (M_{\varepsilon}^T)^{-1} 
\end{align*}
Next, 
\begin{align*}
    B - C_\varepsilon^TA^{-1} C_\varepsilon &=  M_{\varepsilon}^{-1}A^{-1/2}\left[\varepsilon \left(A^{1/2}M_{\varepsilon}B M_{\varepsilon}^TA^{1/2}+ \frac{\varepsilon^2}{4}I_d \right)^{1/2} -  \frac{\varepsilon^2}{2}I_d  \right]A^{-1/2} (M_{\varepsilon}^T)^{-1} \\
    & = \varepsilon M_{\varepsilon}^{-1}A^{-1}\left[ A^{1/2}\left(A^{1/2}M_{\varepsilon}B M_{\varepsilon}^TA^{1/2}+ \frac{\varepsilon^2}{4}I_d \right)^{1/2}A^{-1/2} - \frac{\varepsilon}{2}I_d\right] (M_{\varepsilon}^T)^{-1} \\
    & = \varepsilon M_{\varepsilon}^{-1}A^{-1}\left[ \left(AM_{\varepsilon}B M_{\varepsilon}^T+ \frac{\varepsilon^2}{4}I_d \right)^{1/2} - \frac{\varepsilon}{2}I_d \right] (M_{\varepsilon}^T)^{-1}.
\end{align*}
We can now compute the determinant of $X_{C_\varepsilon}$ as follows:
\begin{align*}
    \det(X_{C_\varepsilon}) & = \det(A)\det(B - C_\varepsilon^TA^{-1} C_\varepsilon) \\
    & = \det(A) \det\left(\varepsilon M_{\varepsilon}^{-1}A^{-1}\left[ \left(AM_{\varepsilon}B M_{\varepsilon}^T+ \frac{\varepsilon^2}{4}I_d \right)^{1/2} - \frac{\varepsilon}{2}I_d \right] (M_{\varepsilon}^T)^{-1}\right) \\
    & = \varepsilon^{d}\det(M_{\varepsilon})^{-2}\det\left( \left(AM_{\varepsilon}B M_{\varepsilon}^T+ \frac{\varepsilon^2}{4}I_d \right)^{1/2}- \frac{\varepsilon}{2}I_d\right).
\end{align*}
Taking the logarithm of last expression yields
\begin{equation}
    \log\det(X_{C_\varepsilon}) = d\log(\varepsilon) - 2\log\det(M_{\varepsilon}) + \log\det\left( \left(AM_{\varepsilon}B M_{\varepsilon}^T+ \frac{\varepsilon^2}{4}I_d \right)^{1/2}- \frac{\varepsilon}{2}I_d\right).
\end{equation}
Now, we will exploit the equality 
\begin{equation*}
    \left( \left(AM_{\varepsilon}B M_{\varepsilon}^T+ \frac{\varepsilon^2}{4}I_d \right)^{1/2}- \frac{\varepsilon}{2}I_d\right)^{-1} = \left( \left(AM_{\varepsilon}B M_{\varepsilon}^T+ \frac{\varepsilon^2}{4}I_d \right)^{1/2} +\frac{\varepsilon}{2}I_d\right) (AM_{\varepsilon}BM_{\varepsilon}^T)^{-1},
\end{equation*}
that derives from the identity
\begin{equation}
    \left( \left(AM_{\varepsilon}B M_{\varepsilon}^T+ \frac{\varepsilon^2}{4}I_d \right)^{1/2}- \frac{\varepsilon}{2}I_d\right) \left( \left(AM_{\varepsilon}B M_{\varepsilon}^T+ \frac{\varepsilon^2}{4}I_d \right)^{1/2} + \frac{\varepsilon}{2}I_d\right) = AM_{\varepsilon}B M_{\varepsilon}^T.
\end{equation}
From this equality we get 
\begin{align*}
     \log\det\left( \left(AM_{\varepsilon}B M_{\varepsilon}^T+ \frac{\varepsilon^2}{4}I_d \right)^{1/2}- \frac{\varepsilon}{2}I_d\right)& = -  \log\det\left( \left(AM_{\varepsilon}B M_{\varepsilon}^T+ \frac{\varepsilon^2}{4}I_d \right)^{1/2}+\frac{\varepsilon}{2}I_d\right)\\
     &  + \log \det(AB) + 2\log \det(M_{\varepsilon}).
\end{align*}
Thus, the $\log \det$ term of the optimal covariance matrix can be written
\begin{equation*}
    \log \det(X_{C_\varepsilon}) = d \log(\varepsilon) + \log \det(AB) -  \log\det\left( \left(AM_{\varepsilon}B M_{\varepsilon}^T+ \frac{\varepsilon^2}{4}I_d \right)^{1/2}+\frac{\varepsilon}{2}I_d\right).
\end{equation*}
Collecting the pieces of the previous computations, and recalling the additive constant $-\varepsilon(\log \det(\Sigma^{-1}) + 2d)$ we derive 
\begin{align*}
    W_\Sigma^{\varepsilon}(\mu, \nu) & = \tr(A) + \tr(B)  - 2\tr\left(  \left( AM_{\varepsilon}BM_{\varepsilon}^T+\frac{\varepsilon^2}{4}I_d\right)^{1/2}  \right) + \varepsilon \log  \det\left( \left(AM_{\varepsilon}B M_{\varepsilon}^T+ \frac{\varepsilon^2}{4}I_d \right)^{1/2}+ \frac{\varepsilon}{2}I_d\right)  \\
    & + \varepsilon  \left(\langle \Gamma_{11}, A \rangle_{\HS} + \ \langle \Gamma_{22}, B \rangle_{\HS} - \log\det(AB) - d-d\log(\varepsilon)  - \log \det(\Sigma^{-1}) \right).
\end{align*}

\end{proof}

\subsection{Dual problem approach}\label{sec:dual_problem}
In optimal transport problems, it is common practice to exploit tools from convex duality theory to characterize the sought solutions. In this section, we derive and solve the dual problem associated to the matrix reduction established in Lemma \ref{lem:regot_matrix}. To remain succinct, we only state the results and defer the related proofs in Section \ref{sec:proofs_dual} of the appendix. As in the previous section, $\Sigma \in S_{2d}^{++}(\mathbb{R})$ is a full-rank covariance matrix on $\mathbb{R}^d \times \mathbb{R}^d$ and $\varepsilon$ is a positive real number. We adapt the analysis of \citep[p.~26]{villani2021topics} to our framework. That is, when the measures $\mu = N_d(A)$ and $\nu = N_d(B)$ are centered Gaussian measures with full-rank covariance matrices $A$ and $B$.  We have shown in equation \eqref{eq:block_constrained_primal} that solving the optimal transport problem \eqref{eq:gaussian_kl_ot} associated to $W_\Sigma^\varepsilon(\mu, \nu)$ boils down to solving the matrix optimization problem
\begin{equation}\label{eq:dual_start_comput}
      \min_{X \in S_{2d}^+(\mathbb{R})}\langle Y + \varepsilon \Sigma^{-1}, X \rangle_{\rm HS} -  \varepsilon \log \det X \quad \text{such that} \quad X_{11} = A,~ X_{22} = B.
\end{equation}

\begin{prop}\label{prop:duality_OTGauss}
    Set $\mu = N_d(A)$ and $\nu = N_d(B)$ two Gaussian measures with full-rank covariance matrices $A$ and $B$. Introducing the matrix $M_\varepsilon$ defined in equation \eqref{eq:m_eps_matrix}, the entropic optimal transport problem \eqref{eq:gaussian_kl_ot} has dual formulation 
    \begin{align}\label{eq:duality_OTGauss}
        W_\Sigma^\varepsilon(\mu, \nu) =  \max_{(F,G)}  ~& \left\{\langle I_d-F, A \rangle_{\HS}  + \langle I_d -G, B \rangle_{\HS} 
          + \varepsilon \log  \det \begin{pmatrix}
         F    & -M_{\varepsilon}    \\
    -M_{\varepsilon}^T&    G 
    \end{pmatrix} \right\}\\ & \quad +  \varepsilon \left[ \langle \Gamma_{11}, A \rangle_{\HS} + \langle \Gamma_{22}, B \rangle_{\HS} -\log \det(\varepsilon \Sigma^{-1})\right], \nonumber
    \end{align}
where the maximum runs over the couples of $S_{d}^{++}(\mathbb{R})$.
\end{prop}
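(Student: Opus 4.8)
The plan is to dualize the matrix program of Remark~\ref{rem:matrix_equality_constraints} — minimize $\langle Y + \varepsilon \Sigma^{-1}, X\rangle_{\HS} - \varepsilon \log\det X$ over $X \in S_{2d}^{++}(\mathbb{R})$ subject to the affine block constraints $X_{11} = A$, $X_{22} = B$ — and to restore the constant $\varepsilon \log\det\Sigma - 2\varepsilon d$ at the end. Writing $\Gamma = \Sigma^{-1}$ in the block form \eqref{eq:inverse_prior}, the matrix $Y + \varepsilon\Sigma^{-1}$ has diagonal blocks $I_d + \varepsilon\Gamma_{11}$, $I_d + \varepsilon\Gamma_{22}$ and off-diagonal block $-M_\varepsilon$, where $M_\varepsilon = I_d - \varepsilon\Gamma_{12}$. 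First I would introduce symmetric Lagrange multipliers $F, G \in S_d(\mathbb{R})$ for the two equality constraints and form the Lagrangian
\[
	\mathcal{L}(X; F, G) = \langle Y + \varepsilon\Sigma^{-1}, X\rangle_{\HS} - \varepsilon\log\det X + \langle F, A - X_{11}\rangle_{\HS} + \langle G, B - X_{22}\rangle_{\HS};
\]
collecting the terms linear in $X$ rewrites it as $\langle K(F,G), X\rangle_{\HS} - \varepsilon\log\det X + \langle F, A\rangle_{\HS} + \langle G, B\rangle_{\HS}$, where $K(F,G)$ is the symmetric $2d\times 2d$ matrix with diagonal blocks $I_d + \varepsilon\Gamma_{11} - F$, $I_d + \varepsilon\Gamma_{22} - G$ and off-diagonal block $-M_\varepsilon$.

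Next comes the inner minimization over $X \succ 0$. The function $X \mapsto \langle K, X\rangle_{\HS} - \varepsilon\log\det X$ is strictly convex on $S_{2d}^{++}(\mathbb{R})$ with gradient $K - \varepsilon X^{-1}$ (using $\nabla \log\det X = X^{-1}$, exactly as in the proof of Proposition~\ref{prop:gradient_primal}); it is bounded below if and only if $K \succ 0$, in which case its unique minimizer is $X = \varepsilon K^{-1}$ and its minimum value is $2d\varepsilon - 2d\varepsilon\log\varepsilon + \varepsilon\log\det K$. Consequently the dual objective equals $\langle F, A\rangle_{\HS} + \langle G, B\rangle_{\HS} + \varepsilon\log\det K(F,G) + 2d\varepsilon - 2d\varepsilon\log\varepsilon$ on the open set $\{(F,G): K(F,G) \succ 0\}$ and $-\infty$ elsewhere.

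Then I would invoke strong duality. The primal is convex and its feasible set meets the interior $S_{2d}^{++}(\mathbb{R})$ of the effective domain of the objective (take $X = \diag(A,B)$, positive definite and satisfying both block constraints), so Slater's condition holds; together with the primal attainment established in Section~\ref{sec:statement_reduction} this gives zero duality gap and attainment of the dual maximum. Adding the constant $\varepsilon\log\det\Sigma - 2\varepsilon d$, the $+2d\varepsilon$ cancels $-2\varepsilon d$ and $-2d\varepsilon\log\varepsilon + \varepsilon\log\det\Sigma = -\varepsilon\log\det(\varepsilon\Sigma^{-1})$ (since $\det(\varepsilon\Sigma^{-1}) = \varepsilon^{2d}\det(\Sigma^{-1})$), so $W_\Sigma^\varepsilon(\mu,\nu) = \max_{F,G}[\langle F, A\rangle_{\HS} + \langle G, B\rangle_{\HS} + \varepsilon\log\det K(F,G)] - \varepsilon\log\det(\varepsilon\Sigma^{-1})$. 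The affine change of variables $F \mapsto I_d + \varepsilon\Gamma_{11} - F$, $G \mapsto I_d + \varepsilon\Gamma_{22} - G$ then turns $\langle F, A\rangle_{\HS}$ into $\langle I_d + \varepsilon\Gamma_{11} - F, A\rangle_{\HS}$ (and similarly for $G$, $B$) and $K(F,G)$ into the symmetric block matrix with diagonal blocks $F$, $G$ and off-diagonal block $-M_\varepsilon$; since positive definiteness of this block matrix forces both diagonal blocks to be positive definite, the effective domain lies in $S_d^{++} \times S_d^{++}$, and on $S_d^{++}\times S_d^{++}$ the $\log\det$ term is $-\infty$ off the positive-definite locus, so writing the maximum over $S_d^{++}\times S_d^{++}$ is consistent. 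This is \eqref{eq:duality_OTGauss}.

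The only genuinely delicate point is the strong-duality step: verifying the constraint qualification (easy, since $\diag(A,B)$ is strictly feasible), checking that the inner infimum collapses to $-\infty$ unless $K(F,G) \succ 0$, and confirming that restricting the stated maximum to $S_d^{++}\times S_d^{++}$ loses nothing. Everything else is the symmetric-matrix calculus already used in Proposition~\ref{prop:gradient_primal} and Corollary~\ref{cor:closed_form_gauss}. As a consistency check, one can verify that the primal point $X^\star = \varepsilon K(F^\star, G^\star)^{-1}$ produced by a dual optimizer has diagonal blocks $A$, $B$ and off-diagonal block equal to the matrix $C_\varepsilon$ of Theorem~\ref{thm:closed_form_gauss}.
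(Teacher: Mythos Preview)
Your proof is correct and follows essentially the same route as the paper: both dualize the matrix program of Remark~\ref{rem:matrix_equality_constraints} via convex duality, perform the inner minimization over $X$ using the first-order condition $X = \varepsilon K^{-1}$ (equivalently, the Legendre transform of $-\log\det$), and finish with the same affine change of variables in $(F,G)$. The only cosmetic difference is that the paper phrases the duality step through Fenchel--Rockafellar (Theorem~\ref{thm:duality_convex}) while you use the Lagrangian and Slater's condition directly; the computations and the treatment of the domain restrictions coincide.
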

The proof of Proposition \ref{prop:duality_OTGauss} is deferred to Section \ref{sec:proofs_dual} of the appendix. It relies on standard tools from convex analysis. This Proposition \ref{prop:duality_OTGauss} shows an alternative optimization problem associated to our original optimal transport problem. From now on, we refer to the right hand side of equation \eqref{eq:duality_OTGauss} as to the dual problem.  The objective function $D_\Sigma^\varepsilon : S_d^{++}(\mathbb{R}) \times S_d^{++}(\mathbb{R}) \rightarrow \mathbb{R} \cup \{-\infty\}$ associated to this problem is called the dual function, and defined for every couple $(F,G) \in S_d^{++}(\mathbb{R}) \times S_d^{++}(\mathbb{R})$ by 
\begin{equation}\label{eq:dual_objective}
    D_{\Sigma}^\varepsilon(F,G) :=\langle I_d - F, A \rangle_{\HS} + \langle I_d-G, B \rangle_{\HS} +   \varepsilon \log  \det \begin{pmatrix}
      F   & -M_{\varepsilon}    \\
      -M_{\varepsilon} ^T  &    G 
    \end{pmatrix}.
\end{equation}

\begin{lem}The dual function \eqref{eq:dual_objective} is strictly concave on $\Pi(M_\varepsilon)$ the convex subset of $S_{d}^{++}(\mathbb{R}) \times S_{d}^{++}(\mathbb{R})$ defined by 
	\begin{equation}\label{eq:constraint_dual}
		\Pi(M_\varepsilon) := \left\{ (F, G) \in S_{d}^{++}(\mathbb{R}) \times S_{d}^{++}(\mathbb{R})~|~ \begin{pmatrix}
			F & -M_\varepsilon \\
			-M_\varepsilon^T & G
		\end{pmatrix} > 0 \right\}.
	\end{equation}
\end{lem}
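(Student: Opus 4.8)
The statement asks us to show that the dual objective $D_\Sigma^\varepsilon$ is strictly concave on the convex set $\Pi(M_\varepsilon)$. The strategy mirrors the earlier strict-convexity argument for the primal objective: the dual function is an affine term plus $\varepsilon$ times $\log\det$ of an affine image of $(F,G)$, so strict concavity reduces to the known strict concavity of $\log\det$ on the positive-definite cone, provided the affine map is injective on the relevant domain.

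First I would split $D_\Sigma^\varepsilon(F,G)$ into the affine part $a(F,G) := \langle I_d + \varepsilon\Gamma_{11} - F, A\rangle_{\HS} + \langle I_d + \varepsilon\Gamma_{22} - G, B\rangle_{\HS}$, which contributes nothing to (non)strict concavity, and the term $\varepsilon\,\ell(F,G)$ where $\ell(F,G) := \log\det\begin{pmatrix} F & -M_\varepsilon \\ -M_\varepsilon^T & G \end{pmatrix}$. Introduce the linear map $\Phi : S_d(\mathbb{R}) \times S_d(\mathbb{R}) \to S_{2d}(\mathbb{R})$ defined by $\Phi(F,G) = \begin{pmatrix} F & -M_\varepsilon \\ -M_\varepsilon^T & G \end{pmatrix} - \begin{pmatrix} 0 & -M_\varepsilon \\ -M_\varepsilon^T & 0 \end{pmatrix} = \begin{pmatrix} F & 0 \\ 0 & G \end{pmatrix}$, so that the argument of $\log\det$ is an affine function of $(F,G)$ whose linear part is $\Phi$. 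The point is that $\Phi$ is injective: if $\begin{pmatrix} F & 0 \\ 0 & G \end{pmatrix} = \begin{pmatrix} F' & 0 \\ 0 & G' \end{pmatrix}$ then $F = F'$ and $G = G'$. On $\Pi(M_\varepsilon)$ the matrix $\Phi(F,G) + \begin{pmatrix} 0 & -M_\varepsilon \\ -M_\varepsilon^T & 0\end{pmatrix}$ lies in $S_{2d}^{++}(\mathbb{R})$ by definition of the set.

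Now take $(F_0,G_0) \neq (F_1,G_1)$ in $\Pi(M_\varepsilon)$ and $t \in (0,1)$. Write $X_s := \begin{pmatrix} F_s & -M_\varepsilon \\ -M_\varepsilon^T & G_s\end{pmatrix} \in S_{2d}^{++}(\mathbb{R})$ for $s \in \{0,1\}$. Because the off-diagonal blocks of $X_0$ and $X_1$ coincide (both equal $-M_\varepsilon$), we have $X_0 \neq X_1$: indeed $(F_0,G_0)\neq(F_1,G_1)$ forces $F_0 \neq F_1$ or $G_0 \neq G_1$, hence $X_0 \neq X_1$. Convexity of $\Pi(M_\varepsilon)$ (which one checks directly: the set is an intersection of the convex cone condition with the convex sets $S_d^{++}\times S_d^{++}$, and the map $(F,G)\mapsto X$ is affine) gives $(1-t)X_0 + tX_1 = \begin{pmatrix} (1-t)F_0 + tF_1 & -M_\varepsilon \\ -M_\varepsilon^T & (1-t)G_0 + tG_1 \end{pmatrix}$, which is the matrix associated with the convex combination $(1-t)(F_0,G_0) + t(F_1,G_1)$. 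Applying strict concavity of $\log\det$ on $S_{2d}^{++}(\mathbb{R})$ (cited as \citep[p.~42,~Cor.~1.4.2]{bakonyi2011matrix} earlier in the paper) to the two distinct points $X_0 \neq X_1$ yields
\begin{equation*}
    \ell\big((1-t)(F_0,G_0) + t(F_1,G_1)\big) = \log\det\big((1-t)X_0 + tX_1\big) > (1-t)\log\det X_0 + t\log\det X_1.
\end{equation*}
Adding $\varepsilon > 0$ times this strict inequality to the affine identity for $a$ gives $D_\Sigma^\varepsilon((1-t)(F_0,G_0)+t(F_1,G_1)) > (1-t)D_\Sigma^\varepsilon(F_0,G_0) + t D_\Sigma^\varepsilon(F_1,G_1)$, which is strict concavity.

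**Main obstacle.** The only genuinely non-mechanical point is verifying that $X_0 \neq X_1$ when $(F_0,G_0) \neq (F_1,G_1)$ — this is immediate here precisely because the off-diagonal block $-M_\varepsilon$ is \emph{fixed}, independent of $(F,G)$, so the map $(F,G)\mapsto X$ is an injective affine map and distinctness transfers. I would also include a one-line check that $\Pi(M_\varepsilon)$ is convex (affine preimage of the convex cone $S_{2d}^{++}$, intersected with the convex set $S_d^{++}\times S_d^{++}$), since the statement invokes it. Everything else is a direct transcription of the strict-convexity proof given earlier for the primal objective, with $\log\det$ concave rather than $-\log\det$ convex.
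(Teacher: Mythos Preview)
Your proof is correct and follows essentially the same route as the paper: both split the dual function into an affine part and $\varepsilon$ times the log-determinant of the block matrix, then invoke strict concavity of $\log\det$ on $S_{2d}^{++}(\mathbb{R})$. You are in fact more explicit than the paper about the one subtle point, namely that the affine map $(F,G)\mapsto\begin{pmatrix}F & -M_\varepsilon\\ -M_\varepsilon^T & G\end{pmatrix}$ is injective (because the off-diagonal block is fixed), so distinctness of $(F_0,G_0)$ and $(F_1,G_1)$ transfers to distinctness of $X_0$ and $X_1$ and strict concavity is preserved under composition; the paper leaves this implicit.
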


\begin{proof}
	As the set of positive-definite matrices is convex, the set $\Pi(M_\varepsilon)$ is convex. Regarding the strict convexity of the dual function $D_\Sigma^{\varepsilon}$, up to additive constant, we can rewrite it as 
	\begin{equation*}
		D_\Sigma^{\varepsilon}(F,G) = \left\langle \begin{pmatrix} 
			F & -M_\varepsilon \\
			-M_{\varepsilon}^T & G
		\end{pmatrix}, \begin{pmatrix}
		- A & 0 \\
		0 & -B
	\end{pmatrix}  \right\rangle_{\HS}  +   \varepsilon \log  \det \begin{pmatrix}
	F   & -M_{\varepsilon}    \\
	-M_{\varepsilon} ^T  &    G 
\end{pmatrix} + \text{constants}.
	\end{equation*}
Exploiting the strict concavity of the log-determinant function on $S_{2d}^{++}(\mathbb{R})$ \citep[p.~42,~Cor.~1.4.2]{bakonyi2011matrix} allows to conclude that $D_\Sigma^{\varepsilon}$ is strictly concave on the set $\Pi(M_\varepsilon)$ that we introduced in equation \eqref{eq:constraint_dual}.
\end{proof} 

To detect the maximizer of the dual function \eqref{eq:dual_objective}, we study its first variation and its critical point.

\begin{prop}\label{prop:grad_dual}
    The dual function \eqref{eq:dual_objective} is differentiable at every $(F,G) \in S_d^{++}(\mathbb{R}) \times S_d^{++}(\mathbb{R})$ such that the matrix $G-M_\varepsilon F^{-1} M_\varepsilon^{T}$ is positive-definite. For such a couple $(F,G)$, denoting by $S = G-M_{\varepsilon}^{T}F^{-1}M_{\varepsilon}$ the Schur complement, the gradient of the dual function is given by the formula
    \begin{equation}\label{eq:grad_dual}
        \nabla D_{\Sigma}^\varepsilon(F,G) = (\varepsilon (F^{-1}+ F^{-1}M_{\varepsilon}S^{-1}M_{\varepsilon}^TF^{-1})-A, \varepsilon S^{-1}-B).
    \end{equation}
    Moreover, solving the gradient equation $\nabla D_{\Sigma}^\varepsilon(F,G) = (0,0)$ is equivalent to solving the matrix equations system
    \begin{equation}\label{eq:system_dual}
        \left\{ 
        \begin{array}{ccc}
            FAF - \varepsilon F - M_{\varepsilon}BM_{\varepsilon}^T& = & 0 \\
             \varepsilon B^{-1} + M_{\varepsilon}^TF^{-1}M_{\varepsilon}& = & G.
        \end{array}
        \right.
    \end{equation}
\end{prop}

The proof of Proposition \ref{prop:grad_dual} is deferred to Section \ref{sec:proofs_dual} of the appendix. Thanks to last proposition, we can find the solution to the dual problem by solving of a matrix-equation system. We can now give the solution to the variational representation \eqref{eq:duality_OTGauss} of entropic optimal transport.
\begin{thm}\label{thm:optim_potentials}
 If $\varepsilon >0$, the optimal dual variables $F_\varepsilon^{\star}, G_\varepsilon^{\star}$ associated to dual problem \eqref{eq:duality_OTGauss} are given by the formulae
 \begin{equation}\label{eq:optim_dual}
     \begin{array}{ccc}
        F_\varepsilon^{\star} &= &  A^{-1}\left( \frac{\varepsilon}{2}I_d + \left(AM_{\varepsilon}BM_{\varepsilon}^T+ \frac{\varepsilon^2}{4}I_d  \right)^{1/2} \right)\\
        G_\varepsilon^{\star} &=  & B^{-1}\left( \frac{\varepsilon}{2}I_d + \left(BM_\varepsilon^T AM_{\varepsilon}+ \frac{\varepsilon^2}{4}I_d  \right)^{1/2} \right).
     \end{array}
 \end{equation}
We can also express the second optimal dual variable $G_\varepsilon^\star$ as a function of $F_\varepsilon^\star$ through the relation
 \begin{equation}
      G_\varepsilon^{\star} =  \varepsilon B^{-1}+ M_\varepsilon^{T}(F_\varepsilon^{\star})^{-1}M_\varepsilon.
 \end{equation}
\end{thm}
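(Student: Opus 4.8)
The plan is to reduce the statement to the first-order optimality system \eqref{eq:system_dual} and then solve the matrix equation it contains explicitly. Recall that the dual function $D_\Sigma^\varepsilon$ is strictly concave on the convex set $\Pi(M_\varepsilon)$, and that outside $\Pi(M_\varepsilon)$ the log-determinant term sends it to $-\infty$; hence the dual maximization \eqref{eq:duality_OTGauss} is effectively a maximization over $\Pi(M_\varepsilon)$, and any critical point lying in $\Pi(M_\varepsilon)$ is automatically \emph{the} unique maximizer. So it suffices to produce one pair $(F,G)$ solving
\[
FAF - \varepsilon F - M_\varepsilon B M_\varepsilon^T = 0 \qquad\text{and}\qquad G = \varepsilon B^{-1} + M_\varepsilon^T F^{-1} M_\varepsilon ,
\]
with $F, G \in S_d^{++}(\mathbb{R})$, and then to verify a posteriori that the corresponding block matrix is positive-definite.

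\textbf{Solving for $F$.} I would conjugate the first equation by $A^{1/2}$. Setting $N := A^{1/2} F A^{1/2}$ (symmetric and positive-definite, since $F>0$) and $K := A^{1/2} M_\varepsilon B M_\varepsilon^T A^{1/2}$ (symmetric and positive-definite under Assumption \ref{hyp:invertible_mat}, since $M_\varepsilon$ is then invertible and $B>0$), the equation becomes $N^2 - \varepsilon N - K = 0$. Then $K = N^2 - \varepsilon N$ is a polynomial in $N$, so $N$ and $K$ commute and are simultaneously diagonalized by an orthogonal matrix; on each common eigenline one has the scalar equation $n^2 - \varepsilon n - k = 0$ with $k>0$, whose unique positive root is $n = \tfrac{\varepsilon}{2} + \sqrt{\tfrac{\varepsilon^2}{4}+k}$. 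Positive-definiteness of $N$ forces this branch on every eigenline, giving $N = \tfrac{\varepsilon}{2} I_d + \bigl(K + \tfrac{\varepsilon^2}{4} I_d\bigr)^{1/2}$, and therefore $F_\varepsilon^\star = A^{-1/2} N A^{-1/2} = \tfrac{\varepsilon}{2} A^{-1} + A^{-1/2}\bigl(A^{1/2} M_\varepsilon B M_\varepsilon^T A^{1/2} + \tfrac{\varepsilon^2}{4} I_d\bigr)^{1/2} A^{-1/2}$. Rewriting the last term via the square-root convention \eqref{eq:square_root_mat_regularized} yields $F_\varepsilon^\star = A^{-1}\bigl(\tfrac{\varepsilon}{2} I_d + (A M_\varepsilon B M_\varepsilon^T + \tfrac{\varepsilon^2}{4} I_d)^{1/2}\bigr)$, as claimed. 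The relation $G_\varepsilon^\star = \varepsilon B^{-1} + M_\varepsilon^T (F_\varepsilon^\star)^{-1} M_\varepsilon$ is then the second equation of \eqref{eq:system_dual} verbatim, and membership $(F_\varepsilon^\star, G_\varepsilon^\star) \in \Pi(M_\varepsilon)$ follows from Theorem \ref{thm:positive_mat_positive_schur}, because $F_\varepsilon^\star > 0$ and the Schur complement $G_\varepsilon^\star - M_\varepsilon^T (F_\varepsilon^\star)^{-1} M_\varepsilon = \varepsilon B^{-1}$ is positive-definite.

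\textbf{Closed form for $G$.} The cleanest route is to exploit the symmetry of the dual objective \eqref{eq:dual_objective} under the simultaneous swap $A \leftrightarrow B$, $M_\varepsilon \leftrightarrow M_\varepsilon^T$, $F \leftrightarrow G$ (the matrix $\left(\begin{smallmatrix} F & -M_\varepsilon \\ -M_\varepsilon^T & G\end{smallmatrix}\right)$ is congruent, through the block-transposition permutation, to $\left(\begin{smallmatrix} G & -M_\varepsilon^T \\ -M_\varepsilon & F\end{smallmatrix}\right)$): performing the elimination in the other order, the critical-point equation becomes $G B G - \varepsilon G - M_\varepsilon^T A M_\varepsilon = 0$, and the identical conjugation argument, now by $B^{1/2}$, gives $G_\varepsilon^\star = B^{-1}\bigl(\tfrac{\varepsilon}{2} I_d + (B M_\varepsilon^T A M_\varepsilon + \tfrac{\varepsilon^2}{4} I_d)^{1/2}\bigr)$. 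Alternatively, one substitutes the explicit $F_\varepsilon^\star$ into $G_\varepsilon^\star = \varepsilon B^{-1} + M_\varepsilon^T (F_\varepsilon^\star)^{-1} M_\varepsilon$: with $P := (A M_\varepsilon B M_\varepsilon^T + \tfrac{\varepsilon^2}{4} I_d)^{1/2}$ one has $P^2 = A M_\varepsilon B M_\varepsilon^T + \tfrac{\varepsilon^2}{4} I_d$ and $(F_\varepsilon^\star)^{-1} = (P + \tfrac{\varepsilon}{2} I_d)^{-1} A$, hence $(P + \tfrac{\varepsilon}{2} I_d)^{-1} A M_\varepsilon B M_\varepsilon^T = P - \tfrac{\varepsilon}{2} I_d$, and comparing squares identifies $M_\varepsilon^T P (M_\varepsilon^T)^{-1}$ with $B^{-1}(B M_\varepsilon^T A M_\varepsilon + \tfrac{\varepsilon^2}{4} I_d)^{1/2} B$ by uniqueness of the appropriate square root; collecting terms gives the stated formula. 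I expect the main obstacle to be precisely this last bookkeeping with the non-symmetric square roots defined through \eqref{eq:square_root_mat_regularized}: making the comparison-of-squares/uniqueness step fully rigorous and checking that the positive-spectrum branch is selected at every stage. The symmetry shortcut largely circumvents it, at the cost of double-checking that the swap genuinely preserves the dual objective up to the additive constant.
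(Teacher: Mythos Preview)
Your proof is correct and follows the same overall strategy as the paper: reduce to the first-order system \eqref{eq:system_dual}, solve the resulting quadratic matrix equation for $F$, and then recover $G$. The differences are in execution rather than conception.

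For $F$, the paper left-multiplies by $A$ and sets $Z = AF$, obtaining $Z^2 - \varepsilon Z - AM_\varepsilon B M_\varepsilon^T = 0$; it then diagonalizes via $A^{1/2}(A^{-1}Z)A^{1/2}$, mirroring the primal argument. Your symmetric conjugation $N = A^{1/2} F A^{1/2}$ is cleaner: it lands directly on a quadratic equation $N^2 - \varepsilon N - K = 0$ between \emph{symmetric} matrices, and your observation that $K$ is a polynomial in $N$ gives simultaneous diagonalizability for free, without the auxiliary change-of-basis matrix $P$ the paper introduces. The two routes are equivalent, but yours avoids carrying non-symmetric objects through the calculation.

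For $G$, the paper proceeds by direct algebra: it rewrites the first equation of \eqref{eq:system_dual} as $M_\varepsilon^T F^{-1} M_\varepsilon = B^{-1} M_\varepsilon^{-1}(FA - \varepsilon I_d) M_\varepsilon$, substitutes into the second equation to get $G_\varepsilon = B^{-1} M_\varepsilon^{-1} F_\varepsilon A M_\varepsilon$, and then invokes the identity $M_\varepsilon^{-1} A^{-1}(AM_\varepsilon B M_\varepsilon^T + \tfrac{\varepsilon^2}{4} I_d)^{1/2} A M_\varepsilon = (BM_\varepsilon^T A M_\varepsilon + \tfrac{\varepsilon^2}{4} I_d)^{1/2}$. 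Your symmetry argument---computing the gradient via the \emph{other} Schur complement $F - M_\varepsilon G^{-1} M_\varepsilon^T$ to obtain $GBG - \varepsilon G - M_\varepsilon^T A M_\varepsilon = 0$ directly---is more conceptual and sidesteps that identity entirely. Your caution about the ``alternative'' bookkeeping route is well placed: the paper's approach is essentially that computation, and the non-symmetric square-root identity it relies on is exactly the step you flagged as delicate.
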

Solving the system given in Proposition \ref{prop:grad_dual} is detailed in Section \ref{sec:proofs_dual} of the appendix. This is how we derive $(F_\varepsilon^{\star}, G_{\varepsilon}^{\star})$ in Theorem \ref{thm:optim_potentials}.
From this solution to the dual problem, we recover the regularized optimal transport cost already computed in Corollary \ref{cor:closed_form_gauss}.

\begin{cor}\label{cor:eot_cost_dual}
For $\mu = N_d(A)$ and $\nu = N_d(B)$ two centered Gaussian measures; the regularized optimal transport cost has the closed form expression given by the formula
\begin{align}
    W_{\Sigma}^{\varepsilon}(\mu, \nu)& = \tr(A) + \tr(B) - 2 \tr\left( \left(AM_{\varepsilon}BM_{\varepsilon}^T+ \frac{\varepsilon^2}{4}I_d \right)^{1/2} \right) + \varepsilon \log \det\left( \left(AM_{\varepsilon}BM_{\varepsilon}^T+ \frac{\varepsilon^2}{4}I_d \right)^{1/2}+\frac{\varepsilon}{2}I_d  \right)  \nonumber \\
   &+\varepsilon  \tr (\Gamma_{11} A)  +\varepsilon \tr(\Gamma_{22} B) - \varepsilon \log \det(AB) - \varepsilon d - \varepsilon d \log(\varepsilon) + \varepsilon\log \det(\Sigma).
\end{align}
\end{cor}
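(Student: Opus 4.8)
The plan is to substitute the optimal dual pair $(F_\varepsilon^\star,G_\varepsilon^\star)$ from Theorem \ref{thm:optim_potentials} into the dual objective \eqref{eq:duality_OTGauss} and simplify the three contributions one at a time. This is legitimate because Proposition \ref{prop:duality_OTGauss} established that $W_\Sigma^\varepsilon(\mu,\nu)$ equals the maximum of the dual objective, and Theorem \ref{thm:optim_potentials} identifies that maximum as being attained at $(F_\varepsilon^\star,G_\varepsilon^\star)$; so it suffices to evaluate $\langle I_d+\varepsilon\Gamma_{11}-F_\varepsilon^\star,A\rangle_{\HS}+\langle I_d+\varepsilon\Gamma_{22}-G_\varepsilon^\star,B\rangle_{\HS}+\varepsilon\log\det\!\begin{pmatrix}F_\varepsilon^\star & -M_\varepsilon\\ -M_\varepsilon^T & G_\varepsilon^\star\end{pmatrix}-\varepsilon\log\det(\varepsilon\Sigma^{-1})$.

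\textbf{The two Hilbert--Schmidt terms.} By cyclicity of the trace, $\langle F_\varepsilon^\star,A\rangle_{\HS}=\tr\!\bigl(A^{-1}(\tfrac{\varepsilon}{2}I_d+(AM_\varepsilon BM_\varepsilon^T+\tfrac{\varepsilon^2}{4}I_d)^{1/2})A\bigr)=\tfrac{\varepsilon d}{2}+\tr\!\bigl((AM_\varepsilon BM_\varepsilon^T+\tfrac{\varepsilon^2}{4}I_d)^{1/2}\bigr)$, and likewise $\langle G_\varepsilon^\star,B\rangle_{\HS}=\tfrac{\varepsilon d}{2}+\tr\!\bigl((BM_\varepsilon^T AM_\varepsilon+\tfrac{\varepsilon^2}{4}I_d)^{1/2}\bigr)$. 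The key reduction here is that $AM_\varepsilon BM_\varepsilon^T=(AM_\varepsilon)(BM_\varepsilon^T)$ and $BM_\varepsilon^T AM_\varepsilon=(BM_\varepsilon^T)(AM_\varepsilon)$ are of the form $XY$ and $YX$ with $X=AM_\varepsilon$ invertible under Assumption \ref{hyp:invertible_mat}, hence conjugate and spectrally identical; adding $\tfrac{\varepsilon^2}{4}I_d$ and taking principal square roots preserves conjugacy, so the two traces coincide. Using $\langle I_d,A\rangle_{\HS}=\tr(A)$ and $\langle\varepsilon\Gamma_{11},A\rangle_{\HS}=\varepsilon\tr(\Gamma_{11}A)$ (and the analogues for $B$), the linear part of \eqref{eq:duality_OTGauss} at $(F_\varepsilon^\star,G_\varepsilon^\star)$ equals $\tr(A)+\tr(B)+\varepsilon\tr(\Gamma_{11}A)+\varepsilon\tr(\Gamma_{22}B)-\varepsilon d-2\tr\!\bigl((AM_\varepsilon BM_\varepsilon^T+\tfrac{\varepsilon^2}{4}I_d)^{1/2}\bigr)$.

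\textbf{The log-determinant term.} Apply the Schur-complement factorization $\det\!\begin{pmatrix}F_\varepsilon^\star & -M_\varepsilon\\ -M_\varepsilon^T & G_\varepsilon^\star\end{pmatrix}=\det(F_\varepsilon^\star)\det\!\bigl(G_\varepsilon^\star-M_\varepsilon^T(F_\varepsilon^\star)^{-1}M_\varepsilon\bigr)$; the relation $G_\varepsilon^\star=\varepsilon B^{-1}+M_\varepsilon^T(F_\varepsilon^\star)^{-1}M_\varepsilon$ from \eqref{eq:system_dual} collapses the second factor to $\det(\varepsilon B^{-1})=\varepsilon^d/\det(B)$. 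Since $\det(F_\varepsilon^\star)=\det\!\bigl((AM_\varepsilon BM_\varepsilon^T+\tfrac{\varepsilon^2}{4}I_d)^{1/2}+\tfrac{\varepsilon}{2}I_d\bigr)/\det(A)$, multiplying by $\varepsilon$ and taking logarithms gives $\varepsilon d\log\varepsilon-\varepsilon\log\det(AB)+\varepsilon\log\det\!\bigl((AM_\varepsilon BM_\varepsilon^T+\tfrac{\varepsilon^2}{4}I_d)^{1/2}+\tfrac{\varepsilon}{2}I_d\bigr)$.

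\textbf{Assembly and the main clerical hazard.} The remaining summand $-\varepsilon\log\det(\varepsilon\Sigma^{-1})$ of \eqref{eq:duality_OTGauss} equals $-2\varepsilon d\log\varepsilon+\varepsilon\log\det(\Sigma)$, since $\Sigma^{-1}$ acts on $\mathbb{R}^{2d}$ and therefore $\det(\varepsilon\Sigma^{-1})=\varepsilon^{2d}\det(\Sigma^{-1})$. Adding the three pieces, the $\varepsilon d\log\varepsilon$ from the log-determinant and the $-2\varepsilon d\log\varepsilon$ from this last term combine into $-\varepsilon d\log\varepsilon$, and the announced closed form for $W_\Sigma^\varepsilon(\mu,\nu)$ follows. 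I expect no real obstacle beyond bookkeeping: the two delicate points are tracking the exponent $2d$ (not $d$) in $\det(\varepsilon\Sigma^{-1})$, and invoking the $XY$/$YX$ conjugacy so that the two distinct square-root traces merge into one; everything else is routine trace and determinant algebra. (As a sanity check, the result must agree with Corollary \ref{cor:closed_form_gauss}, since strong duality gives the same scalar.)
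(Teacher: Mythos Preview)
Your proposal is correct and follows essentially the same approach as the paper's own proof: substitute $(F_\varepsilon^\star,G_\varepsilon^\star)$ into the dual objective \eqref{eq:duality_OTGauss}, compute the two Hilbert--Schmidt terms and the log-determinant term via the Schur complement (using $G_\varepsilon^\star-M_\varepsilon^T(F_\varepsilon^\star)^{-1}M_\varepsilon=\varepsilon B^{-1}$), and assemble with the additive constant $-\varepsilon\log\det(\varepsilon\Sigma^{-1})$. The only cosmetic difference is that the paper states the trace equality via the explicit conjugation identity $M_\varepsilon^{-1}A^{-1}\bigl(AM_\varepsilon BM_\varepsilon^T+\tfrac{\varepsilon^2}{4}I_d\bigr)^{1/2}AM_\varepsilon=\bigl(BM_\varepsilon^TAM_\varepsilon+\tfrac{\varepsilon^2}{4}I_d\bigr)^{1/2}$, whereas you invoke the $XY$/$YX$ similarity directly; these are the same argument.
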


The computations leading to Corollary \ref{cor:eot_cost_dual} can be found in Section \ref{sec:proofs_dual} of the Appendix. The starting point is to plug $(F_{\varepsilon}^{\star}, G_{\varepsilon}^{\star})$, derived in Theorem \ref{thm:optim_potentials}, in dual problem \eqref{eq:duality_OTGauss}.

\section{Invertibility of $M_{\varepsilon}$ and examples of reference couplings}\label{sec:examples}

Our main result relies on the assumption that $M_{\varepsilon}$ is invertible. We now study the circumstances under which this holds.
First, we provide a probabilistic statement to the effect that the $\varepsilon$ for which  $M_{\varepsilon}$ is singular belong to a subset of probability zero. In that sense, random choice of $\varepsilon$ according to a continuous distribution guarantees almost sure invertibility.	
In addition to this, we state deterministic bounds on the value of $\varepsilon$ that guarantee invertibility. We end this section by introducing in Subsection \ref{sec:indep_coord} a class of reference couplings where the matrix $M_{\varepsilon}$ is automatically invertible.

\subsection{Invertibility of $M_{\varepsilon}$}\label{sec:inv_meps}

\paragraph{Probabilistic choice.}
As shown by next result,  $M_{\varepsilon}$ is \emph{generically} invertible. 
	\begin{lem}\label{lem:meps_invertible_surely}
		Let $\Sigma$ be a positive-definite covariance matrix acting on $\mathbb{R}^d \times \mathbb{R}^d$ and denote by $\Gamma_{12}$ the $d\times d$ upper-right block of its inverse. Suppose that $\varepsilon$ is a random variable over $\mathbb{R}_+$ whose distribution is absolutely continuous with respect to  Lebesgue measure. In such a case,
		\begin{equation}
			M_{\varepsilon} = I_d -\varepsilon \Gamma_{12}
		\end{equation}
		is almost surely invertible.
	\end{lem}
	\begin{proof}
		For every $\varepsilon > 0$, we have the equivalences
		\begin{align*}
			\det(M_{\varepsilon}) =0  & \Leftrightarrow  \det(I_d - \varepsilon \Gamma_{12}) = 0 \\
			& \Leftrightarrow (-\varepsilon)^d \det(\Gamma_{12} - \varepsilon^{-1}I_d) = 0 \\
			& \Leftrightarrow  \det(\Gamma_{12} - \varepsilon^{-1}I_d) = 0.	
		\end{align*}
		From the last equality $M_{\epsilon}$ is not invertible if and only if $\varepsilon^{-1}$ is a root of the characteristic polynomial of the matrix $\Gamma_{12}$. Recalling that $\Gamma_{12}$ is of dimension $d\times d$, its characteristic polynomial has at most $d$ real roots. As $\varepsilon$ is assumed to have a density over $\mathbb{R}_+$, denoting by $\sigma(\Gamma_{12})$ the finite spectrum of $\Gamma_{12}$, the probability of the event $\{\varepsilon \in \sigma(\Gamma_{12}) \}$ is null. Therefore, the event $\det(M_{\varepsilon}) =0 $ has zero probability; $M_{\epsilon}$ is almost-surely invertible.
	\end{proof}

\paragraph{Deterministic sufficient condition.} To give our deterministic argument, we introduce the following block decomposition of the reference matrix:
\begin{equation}\label{eq:ref_cov_blocks}
	\Sigma = \begin{pmatrix}
		A_{\refe} & C_{\refe} \\
		C_{\refe}^T & B_{\refe}
	\end{pmatrix}.
\end{equation}
The blocks $A_{\refe}, B_{\refe}$ and $C_{\refe}$ are squared matrices of dimension $d \times d$ with $A_{\refe}$ and $ B_{\refe}$ positive-definite. A classical result of Baker \cite[Thm.1.A]{baker1973joint} ensures the existence of a matrix $R_{\refe}$ of matrix norm at most 1 such that 
\begin{equation}\label{eq:ref_cov_correlation}
C_{\refe} = A_{\refe}^{1/2} R_{\refe} B_{\refe}^{1/2}.
\end{equation}
In our case, as $\Sigma$ is assumed to have full rank, the inequality $\|R_{\refe}\|_{\op} < 1$ holds true. Thanks to this block decomposition, and exploiting Lemma \ref{lem:block_inv}, we can rewrite $M_{\varepsilon}$ as
\begin{equation}\label{eq:block_inv_cov}
		M_\varepsilon = I_d + \varepsilon A^{-1}_{\refe}C_{\refe}(B_{\refe} - C_{\refe}^TA_{\refe}^{-1}C_{\refe})^{-1}.
\end{equation}
In the following proof, we apply the singular value decomposition to $R_{\refe}$, and the spectral theorem to the blocks $A_{\refe}$ and $B_{\refe}$. We remind that from these theorems, there exist $(\sigma_i[r], e_i[r], f_i[r])_{1 \leq i \leq d} \subset \mathbb{R}_+ \times \mathbb{R}^d \times \mathbb{R}^d$, $(\lambda_i[a], e_i[a])_{1 \leq i \leq d}  \subset \mathbb{R}_+ \times \mathbb{R}^d $, and $(\lambda_i[b], e_i[b])_{1 \leq i \leq d}  \subset \mathbb{R}_+ \times \mathbb{R}^d $ such that for every $x \in \mathbb{R}^d$, the equalities
\begin{equation}\label{eq:decompo_blocks_ref}
	R_{\refe} x = \sum_{i=1}^d \sigma_i[r] \langle f_i[r],x \rangle e_i[r], ~ 	A_{\refe} x = \sum_{i=1}^d \lambda_i[a] \langle e_i[a],x \rangle e_i[a] ~ \text{and} ~ B_{\refe} x = \sum_{i=1}^d \lambda_i[b] \langle e_i[b],x \rangle e_i[b]
\end{equation}
hold true. In the previous decompositions, the singular values $(\sigma_i[r])_{1 \leq i \leq d}$, and the spectral values $(\lambda_i[a])_{1 \leq i \leq d}$ and $(\lambda_i[b])_{1 \leq i \leq d}$ are ordered decreasingly. With this convention, $\sigma_1[r]$ is the largest singular value of $R_{\refe}$, and $\lambda_d[a]$ and $ \lambda_d[b]$ are the smallest eigenvalues of $A_{\refe}$ and $B_{\refe}$.

\begin{lem}\label{lem:invertibility_meps}
	Set $\Sigma \in S_{2d}^{++}(\mathbb{R})$ and $\varepsilon > 0$ and let $A_{\refe}$, $B_{\refe}$ and $R_{\refe}$ be the same blocks as in factorization \eqref{eq:ref_cov_correlation} of the reference cross-covariance matrix. If the inequality  
	\begin{equation}\label{eq:invertible_mat_opnorm}
\varepsilon \left(\frac{   \| R_{\refe}\|_{\op}}{1- \|R_{\refe}\|_{\op}^2} \right)< \frac{1}{\|A_{\refe}^{-1/2}\|_{\op}\|B_{\refe}^{-1/2}\|_{\op}}
	\end{equation}
	holds true, the matrix $M_\varepsilon = I_d-\varepsilon\Gamma_{12}$ is invertible. The last inequality can be formulated with the eigenvalues and the singular values. Denoting by  $\sigma_1[r]$ the largest singular value of $R_{\refe}$, and by $\lambda_d[a]$ and $ \lambda_d[b]$ the smallest eigenvalues of $A_{\refe}$ and $B_{\refe}$, the inequality 
	\begin{equation}\label{eq:inequality_invertible_mat}
		\varepsilon \left(\frac{\sigma_1[r]}{1 -\sigma_1[r]^2} \right) < \sqrt{\lambda_d[a] \lambda_d[b]} 
	\end{equation}
	ensures the invertibility of $M_{\varepsilon}$.
\end{lem}

\begin{proof}
	We will show that $\varepsilon \Gamma_{12}$ has matrix norm less than one. From equation \ref{eq:block_inv_cov}, we can express $\Gamma_{12}$ as
	\begin{equation*}
		\Gamma_{12} = -A^{-1}_{\refe}C_{\refe}(B_{\refe} - C_{\refe}^TA_{\refe}^{-1}C_{\refe})^{-1}
	\end{equation*} 
	Let us introduce $R_{\refe}$ the correlation matrix such that $C_{\refe} = A_{\refe}^{1/2} R_{\refe} B_{\refe}^{1/2}$. From this expression of the cross-covariance matrix, we derive the equalities
	\begin{align*}
		\varepsilon \Gamma_{12} &=  \varepsilon A_{\refe}^{-1/2} R_{\refe}B_{\refe}^{1/2}(B_{\refe}- B_{\refe}^{1/2} R_{\refe}^T R_{\refe} B_{\refe}^{1/2})^{-1} \\
		& =  \varepsilon A_{\refe}^{-1/2} R_{\refe}(I_d- R_{\refe}^T R_{\refe})^{-1} B_{\refe}^{-1/2}.
	\end{align*}
	Applying the matrix norm on both sides of the equality yields
	\begin{align}\label{eq:upper_bound_norm_gamma}
		\|\varepsilon \Gamma_{12}\|_{\rm op}& = \varepsilon \|A_{\refe}^{-1/2} R_{\refe}(I_d- R_{\refe}^T R_{\refe})^{-1} B_{\refe}^{-1/2}\|_{\op} \nonumber\\
		& \leq \varepsilon  \|A_{\refe}^{-1/2}\|_{\op} \| R_{\refe}\|_{\op} \|(I_d- R_{\refe}^T R_{\refe})^{-1}\|_{\op} \|B_{\refe}^{-1/2}\|_{\op}.
	\end{align}
	With the singular value decomposition of $R_{\refe}$, and following the same notations than in equation \eqref{eq:decompo_blocks_ref}, we have for every $x \in \mathbb{R}^d$ that
	\begin{equation}
		R_{\refe} x = \sum_{i=1}^d \sigma_i[r] \langle f_i[r],x \rangle e_i[r].
	\end{equation}
	As $R_{\refe}$ has matrix norm less than one, every singular value $\sigma_i[r]$ is smaller than one. Having arranged the singular values decreasingly, we have that $\|R_{\refe}\|_{\op}= \sigma_1[r]$.
	The singular value decomposition of $R_{\refe}$ implies the following spectral decomposition for $I_d - R_{\refe}^T R_{\refe}$: for every $x \in \mathbb{R}^d$
	\begin{equation}
		(I_d - R_{\refe}^T R_{\refe})x = \sum_{i=1}^d (1 - \sigma_i^2[r]) \langle f_i[r], x \rangle f_i[r].
	\end{equation}
	The singular values $\sigma_i[r]$ being smaller than one, we deduce that $I_d - R_{\refe}^T R_{\refe}$ is invertible and that its inverse has the explicit expression
	\begin{equation}
		(I_d - R_{\refe}^T R_{\refe})^{-1} =  \sum_{i=1}^d \frac{1}{1 - \sigma_i^2[r]}  f_i[r] f_i[r]^T.
	\end{equation}
	This is the spectral decomposition of $(I_d - R_{\refe}^T R_{\refe})^{-1}$. We deduce from it
	\begin{equation*}
	\|(I_d- R_{\refe}^T R_{\refe})^{-1}\|_{\op} = \frac{1}{1-\sigma_1^{2}[r]} = \frac{1}{1- \|R_{\refe}\|_{\op}^2}.
	\end{equation*}
	Going back to inequality \eqref{eq:upper_bound_norm_gamma}, and reminding last equality we derive
	\begin{equation*}
		\|\varepsilon\Gamma_{12}\|_{\op} \leq \frac{ \varepsilon \|A_{\refe}^{-1/2}\|_{\op} \| R_{\refe}\|_{\op} \|B_{\refe}^{-1/2}\|_{\op}}{1- \|R_{\refe}\|_{\op}^2} < 1
	\end{equation*}
thanks to inequality \eqref{eq:invertible_mat_opnorm}. This ensures invertibility of $M_{\varepsilon} = I_d - \varepsilon \Gamma_{12}$. To show that inequality \eqref{eq:inequality_invertible_mat} implies the invertibility of $M_{\varepsilon}$, first remind that
\begin{equation*}
	\| R_{\refe} \|_{\op} = \sigma_1[r].
\end{equation*}
And from the spectral decompositions 
\begin{equation*}
	A_{\refe} x = \sum_{i=1}^d \lambda_i[a] \langle e_i[a],x \rangle e_i[a] \quad \text{and} \quad B_{\refe} x = \sum_{i=1}^d \lambda_i[b] \langle e_i[b],x \rangle e_i[b]
\end{equation*}
we derive that $\|A_{\refe}^{-1/2}\|_{\op}= 1/\sqrt{\lambda_d[a]}$ and $\|B_{\refe}^{-1/2}\|_{\op}= 1/\sqrt{\lambda_d[b]}$. Substituting the operator norms by these values in \eqref{eq:invertible_mat_opnorm} yields \eqref{eq:inequality_invertible_mat}.
\end{proof}

\subsection{Reference plan parametrized by a correlation matrix}\label{sec:correlation_parameter}

So far, we have addressed the case where the Gaussian prior has arbitrary covariance matrix $\Sigma$. However, the constraint set $\Pi(\mu, \nu)$ imposes that the solution to our Gaussian problem has a covariance with diagonal blocks $A$ and $B$. In this section we study the case where the prior covariance matrix also has diagonal blocks $A$ and $B$. In this case, it only remains to choose the cross-covariance matrix $C$. However, every valid cross covariance matrix decomposes as $C=A^{1/2} R_{\refe} B^{1/2}$, with $R_{\refe}$ a correlation matrix with matrix norm $\| R_{\refe} \|_{\op} \leq 1$. Thus, the reference coupling has covariance matrix
\begin{equation}\label{eq:prior_cov_cor_ot}
	\Sigma = \begin{pmatrix}
		A &  A^{1/2} R_{\refe} B^{1/2}\\
		B^{1/2}R_{\refe}^TA^{1/2}& B
	\end{pmatrix}.
\end{equation}
As $\Sigma$ is assumed invertible, $R_{\refe}$ is such that $\|R_{\refe}\|_{\op} < 1$. Applying Theorem \ref{thm:closed_form_gauss}, we derive the solution of entropic optimal transport \eqref{eq:gaussian_kl_ot} when the reference covariance is of the form \eqref{eq:prior_cov_cor_ot}.
\begin{cor}\label{cor:solution_correlation_parameter}Set $\mu=N_d(A)$ and $\nu=N_d(B)$ two centered Gaussian measures with full-rank covariance matrices $A$ and $B$. For $\varepsilon > 0$ and $R_{\refe} \in M_d(\mathbb{R})$ a correlation matrix with matrix norm smaller than one such that Assumption \ref{hyp:invertible_mat} holds true, the entropic transport problem 
	\begin{equation}\label{eq:prior_cor_eot}
		\min_{\pi \in \Pi(\mu, \nu)} \int_{\mathbb{R}^d \times \mathbb{R}^d} \|x-y\|^2 d\pi(x,y) + 2\varepsilon \KL(\pi|N_{2d}(\Sigma)) \quad \text{with} \quad \Sigma=\begin{pmatrix}
			A &  A^{1/2} R_{\refe} B^{1/2}\\
			B^{1/2}R_{\refe}^TA^{1/2}& B
		\end{pmatrix},
	\end{equation}
	has solution
	\begin{equation*}
		N_{2d}\begin{pmatrix}
			A & C_\varepsilon\\
			C_\varepsilon^T& B
		\end{pmatrix}, 
	\end{equation*}
	where 
	\begin{equation}\label{eq:sol_correlation}
		C_\varepsilon =A^{1/2}\left[\left( NN^T+\frac{\varepsilon^2}{4}I_d\right)^{1/2} - \frac{\varepsilon}{2}I_d \right](N^T)^{-1}B^{1/2}, \quad \text{and} \quad N = A^{1/2}B^{1/2} + \varepsilon R_{\refe}(I_d - R_{\refe}^T R_{\refe})^{-1}.
	\end{equation}
\end{cor}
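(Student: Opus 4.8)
\section*{Proof proposal for Corollary~\ref{cor:solution_correlation_parameter}}

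The plan is simply to instantiate Theorem~\ref{thm:closed_form_gauss} for the reference covariance \eqref{eq:prior_cov_cor_ot}, after observing that the matrix $M_\varepsilon$ it produces equals $A^{-1/2}NB^{-1/2}$; the corollary then drops out by pushing the factors $A^{\pm1/2}$, $B^{\pm1/2}$ through the (non-symmetric) matrix square root of \eqref{eq:square_root_mat_regularized}.

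First I would use the block-inversion formula (Lemma~\ref{lem:block_inv}) to compute $\Gamma=\Sigma^{-1}$ for $\Sigma$ of the form \eqref{eq:prior_cov_cor_ot}, which — since $\|R_{\refe}\|_{\op}<1$ guarantees that $I_d-R_{\refe}^TR_{\refe}$ is invertible — gives the off-diagonal block $\Gamma_{12}=-A^{-1/2}R_{\refe}(I_d-R_{\refe}^TR_{\refe})^{-1}B^{-1/2}$ recorded in the display just before the statement. Hence
\[
M_\varepsilon=I_d-\varepsilon\Gamma_{12}=I_d+\varepsilon A^{-1/2}R_{\refe}(I_d-R_{\refe}^TR_{\refe})^{-1}B^{-1/2}=A^{-1/2}\left(A^{1/2}B^{1/2}+\varepsilon R_{\refe}(I_d-R_{\refe}^TR_{\refe})^{-1}\right)B^{-1/2}=A^{-1/2}NB^{-1/2},
\]
which is the single identity driving the whole argument. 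Because $A$ and $B$ are invertible, this also shows that Assumption~\ref{hyp:invertible_mat} (invertibility of $M_\varepsilon$) holds if and only if $N$ is invertible, so $(N^T)^{-1}$ in \eqref{eq:sol_correlation} is meaningful precisely in the regime covered by Theorem~\ref{thm:closed_form_gauss}.

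Next I would substitute $M_\varepsilon=A^{-1/2}NB^{-1/2}$ into the closed form of Theorem~\ref{thm:closed_form_gauss}. A direct computation gives $A^{1/2}M_\varepsilon BM_\varepsilon^TA^{1/2}=N N^T$, so the defining convention \eqref{eq:square_root_mat_regularized} yields
\[
\left(AM_\varepsilon BM_\varepsilon^T+\frac{\varepsilon^2}{4}I_d\right)^{1/2}=A^{1/2}\left(NN^T+\frac{\varepsilon^2}{4}I_d\right)^{1/2}A^{-1/2}.
\]
Using in addition $(M_\varepsilon^T)^{-1}=A^{1/2}(N^T)^{-1}B^{1/2}$ and $\tfrac{\varepsilon}{2}I_d=A^{1/2}\bigl(\tfrac{\varepsilon}{2}I_d\bigr)A^{-1/2}$, inserting these into $C_\varepsilon=\bigl[(AM_\varepsilon BM_\varepsilon^T+\tfrac{\varepsilon^2}{4}I_d)^{1/2}-\tfrac{\varepsilon}{2}I_d\bigr](M_\varepsilon^T)^{-1}$, factoring $A^{1/2}$ out on the left and simplifying $A^{-1/2}A^{1/2}=I_d$ on the right, produces exactly formula \eqref{eq:sol_correlation}. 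The description of the optimal plan $\pi_\varepsilon$ as the centered Gaussian with the prescribed block covariance, together with its uniqueness, then carries over verbatim from Theorem~\ref{thm:closed_form_gauss}.

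There is no genuine obstacle here: the only substantive point is that the similarity $X\mapsto A^{1/2}XA^{-1/2}$ commutes with the square root as \emph{defined} in \eqref{eq:square_root_mat_regularized} (this is how that convention is built), so all that remains is careful bookkeeping of the $A^{\pm1/2}$ and $B^{\pm1/2}$ factors and the observation that $N$ inherits invertibility from $M_\varepsilon$.
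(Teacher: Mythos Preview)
Your proposal is correct and follows essentially the same route as the paper: both arguments factorize $M_\varepsilon=A^{-1/2}NB^{-1/2}$, observe the simplification $A^{1/2}M_\varepsilon BM_\varepsilon^T A^{1/2}=NN^T$, and then push the $A^{\pm1/2}$ factors through the square-root convention \eqref{eq:square_root_mat_regularized} before substituting into Theorem~\ref{thm:closed_form_gauss}. Your write-up is slightly more explicit about $(M_\varepsilon^T)^{-1}=A^{1/2}(N^T)^{-1}B^{1/2}$ and the equivalence between invertibility of $M_\varepsilon$ and of $N$, but the underlying computation is identical.
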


\begin{proof}
	We now detail the computations that led to the closed form \eqref{eq:sol_correlation} for the cross covariance $C_\varepsilon$.
	Substituting $A_{\refe}$ by $A$,  $B_{\refe}$ by $B$, and $C_{\refe}$ by $A^{1/2} R_{\refe}B^{1/2}$ in formula \eqref{eq:block_inv_cov}, the matrix $M_{\varepsilon}= I_d - \varepsilon \Gamma_{12}$ that appears in Theorem \ref{thm:closed_form_gauss} is now given by 
	\begin{equation*}
		M_{\varepsilon}= I_d + \varepsilon A^{-1/2} R_{\refe}(I_d - R_{\refe}^TR_{\refe})^{-1}B^{-1/2}.
	\end{equation*}
	With the purpose of circumventing intricate expressions for $C_\varepsilon$, we factorize $M_\varepsilon$ as 
	\begin{align*}
		M_{\varepsilon}& = A^{-1/2}(A^{1/2}B^{1/2}+ \varepsilon R_{\refe}(I_d - R_{\refe}^T R_{\refe})^{-1})B^{-1/2}\\
		& = A^{-1/2} N B^{-1/2}, 
	\end{align*}
	where $N=A^{1/2}B^{1/2}+ \varepsilon R_{\refe}(I_d - R_{\refe}^TR_{\refe})^{-1}$. Now, from Theorem \ref{thm:closed_form_gauss}, we know that the cross covariance matrix is given by the formula 
	\begin{align*}
		C_\varepsilon &=\left[\left( AM_{\varepsilon}BM_{\varepsilon}^T+\frac{\varepsilon^2}{4}I_d\right)^{1/2} - \frac{\varepsilon}{2}I_d \right](M_\varepsilon^T)^{-1} \\
		& = A^{1/2}\left[ \left( A^{1/2}M_{\varepsilon}BM_\varepsilon^TA^{1/2} +\frac{\varepsilon^2}{4}I_d\right)^{1/2}- \frac{\varepsilon}{2}I_d \right]A^{-1/2} (M_\varepsilon^T)^{-1}.
	\end{align*}
	Next, we observe the simplification
	\begin{equation}
		A^{1/2}M_{\varepsilon}BM_\varepsilon^T A^{1/2} = NN^T.
	\end{equation}
	From this last observation, we finally reach the expression
	\begin{equation*}
		C_{\varepsilon} = A^{1/2}\left[ \left(N N^T+\frac{\varepsilon^2}{4}I_d\right)^{1/2}- \frac{\varepsilon}{2}I_d \right] (N^T)^{-1}B^{1/2}.
	\end{equation*}
	
\end{proof}

\subsection{Independent-coordinates reference coupling}\label{sec:indep_coord}
We now introduce a class of coupling covariances that ensures invertibility of the matrix $M_{\varepsilon}$. We call independent-coordinate covariance a matrix $\Sigma_{\rho} \in S_{2d}^{++}(\mathbb{R})$  of the form 
\begin{equation}\label{eq:indep_coord_couple}
	\Sigma_{\rho} = \begin{pmatrix}
		I_d & \diag(\rho_1,\ldots, \rho_d) \\
		\diag(\rho_1,\ldots, \rho_d) & I_d
	\end{pmatrix} \quad \text{where} \quad \forall i \in \{1,\ldots,d\},~0 \leq \rho_i< 1
\end{equation}
and $\diag(\rho_1,\ldots, \rho_d)$ is the $d \times d$ diagonal matrix with diagonal vector $(\rho_1,\ldots, \rho_d) \in \mathbb{R}^d$. If $(Z_1, Z_2) \in \mathbb{R}^d \times \mathbb{R}^d$ is a Gaussian couple where $Z_1$ and $Z_2$ have their own coordinates independent, up to rescaling, its covariance is of the form \eqref{eq:indep_coord_couple}. That is why we call such matrices independent-coordinate reference couplings. As $\rho_i \in [0,1)$, such a matrix is invertible and has inverse 
\begin{equation}
	\Sigma_{\rho}^{-1} =  \begin{pmatrix}
		\diag\left(\frac{1}{1-\rho_1^2}, \ldots, \frac{1}{1-\rho_d^2}\right) & -	\diag\left(\frac{\rho_1}{1-\rho_1^2}, \ldots, \frac{\rho_d}{1-\rho_d^2}\right) \\
	-	\diag\left(\frac{\rho_1}{1-\rho_1^2}, \ldots, \frac{\rho_d}{1-\rho_d^2}\right) &	\diag\left(\frac{1}{1-\rho_1^2}, \ldots, \frac{1}{1-\rho_d^2}\right)
	\end{pmatrix}.
\end{equation} 
In this case, the matrix $M_{\varepsilon} = I_d -\varepsilon \Gamma_{12}$ that appears in our main Theorem \ref{thm:closed_form_gauss} simplifies to 
\begin{equation}
	M_\varepsilon = \diag\left(1+\frac{\varepsilon \rho_1}{1-\rho_1^2}, \ldots, 1+\frac{\varepsilon \rho_d}{1-\rho_d^2}\right),
\end{equation}
which is invertible for any value of $\varepsilon > 0$. We now study the scenario where all correlation coefficients $\rho_1,\ldots,\rho_d$ are equal to the same $\rho \in [0,1)$. As we will this in the next result, this choice of reference coupling connects to entropic optimal transport with product measure as reference coupling.
 
\begin{cor}
	Set $\varepsilon > 0$. Let $N_d(A)$ and $N_d(B)$ be two Gaussian measures and consider an independent coordinate coupling $\Sigma_{\rho}$ with correlation parameter $\rho\in[0,1)$, that is 
	\begin{equation}
	\Sigma_{\rho}=\begin{pmatrix}
		I_d & \rho I_d \\
		\rho I_d & I_d
		\end{pmatrix}.
	\end{equation}
In this case, the cross correlation matrix $C_{\varepsilon}$ solution of the entropic optimal transport problem reduces to
\begin{equation}\label{eq:sol_indep_coord}
	C_{\varepsilon} = \left[A^{1/2}\left( A^{1/2}B A^{1/2} +\frac{\varepsilon(\rho)^2}{4}I_d\right)^{1/2}A^{-1/2} - \frac{\varepsilon(\rho)}{2}I_d\right] \quad \text{with} \quad \varepsilon(\rho) := \varepsilon\left(1+\frac{\varepsilon \rho}{1-\rho^2} \right)^{-1}.
\end{equation}
Moreover, we have the following asymptotic behaviors for $\varepsilon(\rho)$:
\begin{equation}\label{eq:asymptotic_rho_eps}
\varepsilon(\rho)  \underset{\rho \rightarrow 0^{+}}{\sim} \varepsilon \qquad \text{and} \qquad \varepsilon(\rho)  \underset{\rho \rightarrow 1^{-}}{\sim} 2(1-\rho).
\end{equation}

\end{cor}

Before proving this last result, we make precise how it connects to standard entropic optimal transport. The cross-correlation matrix \eqref{eq:sol_indep_coord} is exactly the solution of entropic optimal transport with product measure as reference and regularization parameter $\varepsilon(\rho)$ given in \eqref{eq:sol_indep_coord}. While this result may appear like a return to the product measure, we believe that it gives an alternative interpretation of entropic optimal transport. Penalizing the optimal transport problem by adding $2\varepsilon \KL(\cdot | \mu \otimes \nu)$ is equivalent, when $\varepsilon$ goes to zero, to the addition of the penalty term
\begin{equation}
	2 \KL(\cdot | N(\Sigma_{\varepsilon})) \quad \text{with} \quad \Sigma_{\varepsilon} = \begin{pmatrix}
		I_d & (1-\varepsilon/2)I_d \\
		(1-\varepsilon/2)I_d & I_d
	\end{pmatrix}.
\end{equation}
In this alternative interpretation, the rate of epsilon going to zero encodes the correlation parameter of the reference coupling in the penalty term. This observation will reveal valuable in the next section. The two measures $\mu$ and $\nu$ will be two time-marginals of the same Gaussian process, with small time gap. In this scenario, a smaller time gap should lead to larger correlation coefficient in the reference coupling. Before moving to our application to trajectory sampling, we point out that if the $\rho_i$ are not chosen all equal, the equivalence with the reference product measure does not hold any more.

\begin{proof}
	Formula \eqref{eq:sol_indep_coord} is a consequence of Theorem \ref{thm:closed_form_gauss} in the case where $M_{\varepsilon}$ reduces to $M_{\varepsilon}= (1+\varepsilon\rho/(1-\rho^2))I_d$. Regarding the asymptotic behavior of $\varepsilon(\rho)$, we focus on the regime where $\rho$ converges increasingly toward one in equation \eqref{eq:asymptotic_rho_eps}. For this purpose, we write 
\begin{align*}
	\varepsilon(\rho)& = \varepsilon \left(\frac{1-\rho^{2} + \varepsilon \rho}{(1-\rho)(1+\rho)}\right)^{-1} \\
	& = \varepsilon(1-\rho)\left(\frac{1-\rho^{2} + \varepsilon \rho}{(1+\rho)}\right)^{-1}\\
	& = 2(1-\rho)\left(\frac{2\varepsilon^{-1}(1-\rho^{2}) + 2\rho}{(1+\rho)}\right)^{-1}.
\end{align*}
Then, one can check that for every value of $\varepsilon$, the last factor in last equality converges toward one when $\rho$ goes to one. This means 
\begin{equation*}
	\varepsilon(\rho)  \underset{\rho \rightarrow 1^{-}}{\sim} 2(1-\rho),
\end{equation*}
as claimed.
\end{proof}

\section{Trajectory Reconstruction: From Statics to Dynamics}\label{sec:static_to_dyn}

\subsection{Framework and sampling algorithm}
We now assume to have $\mu_{t_1},\ldots, \mu_{t_{n}}$ a finite collection of Gaussians on $\mathbb{R}^d$, interpreted as the time marginals of some continuous-time process in $d$-dimensions (or, alternatively, of an interacting particle system comprised of $d$ particles, each evolving in $\mathbb{R})$. {
{Importantly, this reduction is only meaningful if the reference structure used in each pairwise problem is consistent with a common underlying process. A continuous-time Gaussian reference process provides exactly this consistency, while still allowing each pairwise problem to be solved independently}. Note that if we are observing marginals at an increasingly dense collection of time points in a compact time interval (say [0,1]), the only reference process consistent with a product reference at all scales is a coloured noise process -- not even well defined as a process. Thus, within the framework of trajectory inference, product couplings are ill-suited as references, as they steer toward independent transitions at any scale -- no matter how local-- which is at odds with the very temporal coherence of the process. \\

Concretely, let $(X_t^{\obs})_{t \in [0,1]}$ be a centered Gaussian process on $\mathbb{R}^d$ and $0\le t_1 < t_2 < \cdots < t_n \le 1$ be $n$ observation times. Suppose that at each time $t_j$ we observe (or estimate) the marginal
\begin{equation}\label{eq:marginals}
\mu_{t_j} := \mathcal{L}(X_{t_j}^{\obs}).
\end{equation}
As $(X_t^{\obs})_{t \in [0,1]}$ is supposed to be centered and Gaussian, for any $j \in \{1,\ldots, n\}$, we can write $\mu_{t_j}  = N(A_j)$ where $A_{j}\in S_d^{++}(\mathbb{R})$ is symmetric positive definite.  These Gaussian measures $N(A_1), \ldots, N(A_n)$ are the static inputs of our problem. To induce dynamics, we choose an other centered Gaussian process $(Z_t)_{t \in [0,1]}$ that controls the reconstructed trajectory. Assuming the Gaussian process $(Z_t)_{t \in [0,1]}$ centered, it is completely characterized by its matrix-valued covariance kernel $K_Z : [0,1]^2 \rightarrow M_d(\mathbb{R})$ defined at every $s,t$ by 
\begin{equation}\label{eq:reference_kernel_proc}
	K_Z(s,t) = \mathbb{E}[Z_s Z_t^T].
\end{equation}
At any pair of successive times $(t_j, t_{j+1})$ this kernel \eqref{eq:reference_kernel_proc} yields the Gaussian reference coupling
\begin{equation}\label{eq:reference_covar_proc}
 N(\Sigma_j)
\quad \text{where} \quad
\Sigma_j=
\begin{pmatrix}
K_Z(t_j,t_j)  & K_Z(t_j,t_{j+1})\\
K_Z(t_j,t_{j+1})^T & K_Z(t_{j+1},t_{j+1})
\end{pmatrix}\in S_{2d}^{++}(\mathbb{R}).
\end{equation}
In other words, the reference Gaussian coupling $N(\Sigma_j)$ is the law of the 2$d$-vector $(Z_{t_j}, Z_{t_{j+1}}) \in \mathbb{R}^d \times \mathbb{R}^d$. We point out that the family $\{\Sigma_j\}_{j=1}^{n-1}$ is automatically \emph{consistent across time} because it is obtained from one underlying continuous-time process. We now define a local objective that decomposes into successive pairs, and thus is amenable to our previous analysis.
Define the induced dynamics on the grid $\{t_j\}$ by solving, for each step independently,
the entropic optimal transport problem with a \emph{non-product} Gaussian reference:
\begin{equation}\label{eq:time_entropic_ot_ref}
N(\Sigma_{j}^{\varepsilon}) = \argmin_{\pi\in\Pi(\mu_{t_{j}},\mu_{t_{j+1}})}
\left\{
\int_{\mathbb{R}^d\times \mathbb{R}^d}\|x-y\|^2 \diffd\pi(x,y)
\;+\;
2\varepsilon\,\KL\!\big(\pi |\,N(\Sigma_j)\big)
\right\},
\qquad j=1,\dots,n-1.
\end{equation}
This is exactly the Gaussian entropic OT problem we solved in Section \ref{sec:closed_form}, with the identifications
\[
A\leftarrow A_{j},\qquad 
B\leftarrow A_{j+1},\qquad 
\Sigma \leftarrow \Sigma_j.
\]
The point is that allowing general Gaussian reference couplings $N(\Sigma_j)$ (rather than $\mu_{t_j}\otimes\mu_{t_{j+1}}$)
introduces meaningful \emph{temporal structure} at each step while remaining analytically tractable.  From these couplings $(N(\Sigma_j^{\varepsilon}))_{1 \leq j \leq n-1}$ we can build a discrete time Markov chain $(\widehat{Z}_{t_j})_{1\leq j \leq n}$ such that for any time $t_j $, the couple $(\widehat{Z}_{t_j}, \widehat{Z}_{t_{j+1}})$ has distribution $N(\Sigma_j^{\varepsilon})$ the solution of \eqref{eq:time_entropic_ot_ref}. Writing the block decomposition 
\begin{equation}\label{eq:blockCovOptimalCoupling}
\Sigma_j^{\varepsilon} = 
\begin{pmatrix}
A_{j} & C_j\\
C_j^\top & A_{j+1}
\end{pmatrix},
\end{equation}
with $C_j \in M_d(\mathbb{R})$ given in Theorem \ref{thm:closed_form_gauss}, we can make the transition mechanism from time $t_{j}$ to time $t_{j+1}$ explicit:
\begin{equation}\label{eq:MarkovTransitionFromC}
\widehat{Z}_{t_{j+1}} =  C_j^\top A_{j}^{-1} \widehat{Z}_{t_j} + \eta_j,
\qquad
\eta_j\sim N(0,A_{j+1} - C_j^\top A_{j}^{-1} C_j),
\end{equation}
where $\eta_j$ is independent from all previous (w.r.t. the time index) random variables. 
From Theorem \ref{thm:closed_form_gauss}, we have an explicit formula for $C_{j}$. Thus, the Markov chain defined in \eqref{eq:MarkovTransitionFromC} can be sampled efficiently.\\

\begin{algorithm}[ht]
	\SetAlgoLined
	\KwIn{\textit{Observations}: marginal covariances $A_{1}, \cdots , A_{n}$ and times $t_1 < \ldots < t_n$\\
		\textit{Hyperparameters}: Matrix-valued kernel $K_{Z}$ and $\varepsilon \geq 0$}
	\textit{Initialization:} $\widehat{Z}_{t_1} \sim N(A_{1})$\\
	\For{$j \leftarrow 1$ \KwTo $n-1$}{
		\tcc{Compute the reference covariance $\Cov(Z_{t_j}, Z_{t_{j+1}})$}
		$\Sigma_{j} \gets \Cov(Z_{t_j}, Z_{t_{j+1}})$\\
		\tcc{Solve the optimal transport problem with reference coupling $N(\Sigma_j)$}
		$\begin{pmatrix}
			A_j & C_j \\
			C_j^{T} & A_{j+1} 
		\end{pmatrix} \gets $ solution of entropic optimal transport \eqref{eq:time_entropic_ot_ref}\\
		\tcc{Sample $\widehat{Z}_{t_{j+1}}$ from $\widehat{Z}_{t_j}$ }
		$\eta_j \sim N(A_{j+1}-C_j^TA_j^{-1}C_j)$\\
		$\widehat{Z}_{t_{j+1}} \gets  C_{j}^TA_{j}^{-1}\widehat{Z}_{t_j} + \eta_j$ 
	}
	
	\KwRet{$\left(\widehat{Z}_{t_1}, \ldots, \widehat{Z}_{t_n} \right)$}
	
	\caption{Dynamic induced by entropic optimal transport with reference process}
	\label{alg:sampling_markovian_eot}
\end{algorithm}

To summarize, choosing the global criterion as a \emph{sum of local entropic costs relative to the reference couplings}
yields a fully decoupled set of $n-1$ tractable pairwise problems \eqref{eq:time_entropic_ot_ref},
whose solutions can be glued into a coherent Gaussian Markov chain on $\{t_j\}_{j=1}^{n}$. An appealing feature of this approach is that it features a certain \emph{resolution invariance}: the inferred dynamics do not depend qualitatively on how finely time happens to be sampled, for example if intermediate time points are inserted or removed. By comparison, if one were to take $\mu_{t_{j}}\otimes\mu_{t_{j+1}}$ as the reference coupling for successive times, then one would steer toward independence between successive times.
In the present reconstruction viewpoint, this corresponds to a baseline transition kernel
\begin{equation*}
\mathbb{P}(Z_{t_{j+1}}\in\cdot\,\mid\,Z_{t_j}=x)=\mu_{t_{j+1}}(\cdot),
\end{equation*}
i.e.\ resampling from the next marginal regardless of the current state, which is a trivial (memoryless) notion of dynamics, in effect pure (colored) noise.
To encodes temporal coherence in a statistically meaningful way, one needs to use a correlated Gaussian reference coupling, which illustrates the importance of entropic OT with a general (correlated) reference. We further illustrate these points numerically in the next section.

\subsection{Numerical Examples}\label{sec:numerical_experiments}

We now numerically illustrate the framework of entropic OT with general Gaussian reference in the context of trajectory reconstruction, as laid out in the previous section. In our experimental set-up, the true dynamics  $(X^{\obs}_t)_{t\in[0,1]}$ on $\mathbb R^2$ arises from the linear diffusion 
\begin{equation}\label{eq:true_SDE}
	\diffd X_t^{\obs} = -K X_t^{\obs}\diffd t +  \diffd W_t, \quad \text{where} \quad K = \begin{pmatrix}
		3 & 0 \\
		2 & 3
	\end{pmatrix} 
\end{equation}
is the drift matrix and $W_t$ is a standard $2$-dimensional Brownian motion. Full trajectories from \eqref{eq:true_SDE} are displayed in Figure \ref{fig:3dim_sample_path_true_diff}. However, only static information at the times $t_1 < \ldots < t_n \subset [0,1]$ is available; namely, the time marginals $\mu_{t_j} = \mathcal{L}(X_{t_j}^{\obs})$.
In our experiments, the marginals admit the closed form given (see e.g. \cite[Prop.~3.5]{pavliotis2014stochastic}) by 
\begin{equation}\label{eq:marginal_experiments}
 \mu_{t_j} = N(A_j) \quad \text{where} \quad A_j = e^{-t_j K}\left(A_0 + \int_0^{t_j} e^{s(K+K^T)} \diffd s\right) e^{-t_jK^T}.
\end{equation}
In case a time marginal $\mu_{t_j}$ is unknown and only observable from samples, we would substitute $A_j$ by an estimator thereof. 
Equation \eqref{eq:marginal_experiments} encodes the static part of our framework.
Regarding the dynamic component, we need to choose a reference process $(Z_t)_{t \in [0,1]}$, or equivalently a covariance kernel as in equation \eqref{eq:reference_kernel_proc}. We take kernel matrices $K_{Z}$ corresponding to a process assumed to have independent coordinates. That yields reference couplings of the form introduced in Section \ref{sec:indep_coord}. And in this time-dependent scenario, they read
\begin{equation}\label{eq:indep_coord_kernel}
	K_Z(s,t) = \rho(s,t)I_d,
\end{equation}
for a scalar covariance kernel $\rho$. Consequently, the reference coupling $N(\Sigma_j)$ in \eqref{eq:reference_covar_proc} reduces to 
\begin{equation*}
	\Sigma_j = \begin{pmatrix}
		I_d & \rho(t_j,t_{j+1})I_d \\
		\rho(t_j,t_{j+1})I_d & I_d
	\end{pmatrix}.
\end{equation*}

We consider three choices for the kernel $\rho$: 

\begin{itemize}
\item The fractional Brownian Motion (fBM) kernel $\rho_H$ with parameter $H \in (0,1)$ defined, for $s \leq t$, by
\begin{equation}\label{eq:fbm_kernel}
\rho_H(s,t) = \frac{1}{2|t+1|^{2H}}\left(|t+1|^{2H} + |s+1|^{2H} - |t-s|^{2H}\right).
\end{equation}

\item The heat (Gaussian) kernel $\rho_\sigma$ with parameter $\sigma > 0$ defined, for $s,t \in [0,1]$, by
\begin{equation}\label{eq:heat_kernel}
\rho_{\sigma}(t,s) = \exp\left(-\frac{(t-s)^2}{2 \sigma^2}\right).
\end{equation}

\item The trivial (white noise) kernel,
\begin{equation}\label{eq:trivial_kernel}\rho_{\otimes}(s,t):={\bf 1}\{s=t\}.
\end{equation}
\end{itemize}
The fBM kernel corresponds to a continuous but non-differentiable Gaussian process. The parameter $H$ controls the Hölder regularity of the paths: smaller values of $H$ yield rougher trajectories, while larger values lead to smoother ones. The case $H=1/2$ yields standard Brownian motion. By contrast, the heat kernel is associated with highly regular (in fact, smooth) sample paths. Finally, the trivial kernel corresponds to Gaussian white noise, which possesses negative regularity (it is \emph{not} defined as a process but as a distribution) and corresponds to independent time marginals.

In each case, we will sample discrete-time processes $(\widehat{Z}_{t_j})_{1 \leq j \leq n}$ by way of  Algorithm \ref{alg:sampling_markovian_eot}. For visualization purposes, we interpolate linearly between successive realisations $\widehat{Z}_{t_{j}}$ and $\widehat{Z}_{t_{j+1}}$. 
 In conducting these experiments, we wish to primarily focus on the qualitative impact of the choice of reference kernel \eqref{eq:indep_coord_kernel} on the reconstructed trajectories. For this reason, we set $\varepsilon = 0.01$ throughout. We consider evenly spaced observation times $t_j=(j-1)/n$ for three different values $n \in \{100, 500, 1000\}$, corresponding to progressively finer time resolutions. 
 
 We begin by applying sampling Algorithm \ref{alg:sampling_markovian_eot} for classic entropic optimal transport, corresponding to choosing the trivial reference kernel $\rho_{\otimes}(s,t)$.  Figure \ref{fig:indep_ref_eot_paths} depicts the generated trajectory between time $0$ and time $1$, for marginals observed at evenly spaced times. In the low-resolution scenario $n=100$, the evolution appears plausible as a diffusion. However, as the time-resolution (and hence number of marginals) increases, the sampled trajectories feature increasingly erratic oscillations. This reflects that fact that the product reference cannot cannot accommodate temporal contiguity. 
  Next, we sample from Algorithm \ref{alg:sampling_markovian_eot}, with the fBM kernel of Hurst index $H=0.25$ as reference. Figure \ref{fig:fbm_ref_eot_paths} also shows a progression toward rougher trajectories as $n$ grows, but without degenerating into white noise. Increasing the Hurst parameter to $H=1/2$ (Brownian motion reference) yields the sample paths displayed in Figure \ref{fig:brown_ref_eot_paths}. Qualitatively, the paths feature the regularity one expects of a diffusion driven by Brownian motion. Choosing the heat kernel as a reference, corresponds to highly regular reference paths -- correspondingly, one observes in Figure \ref{fig:heat_ref_eot_paths} that the sampled trajectories arae very smooth and exhibit  minimal oscillations. This seems especially true in the highest resolution regime when $n=1000$.\\

The code to reproduce the experiments is available at \url{https://github.com/Paul-Freulon/Entropic_Optimal_Transport_Reference_Coupling}.

\begin{figure}[h]
	\centering
	\includegraphics[width=0.6\linewidth,]{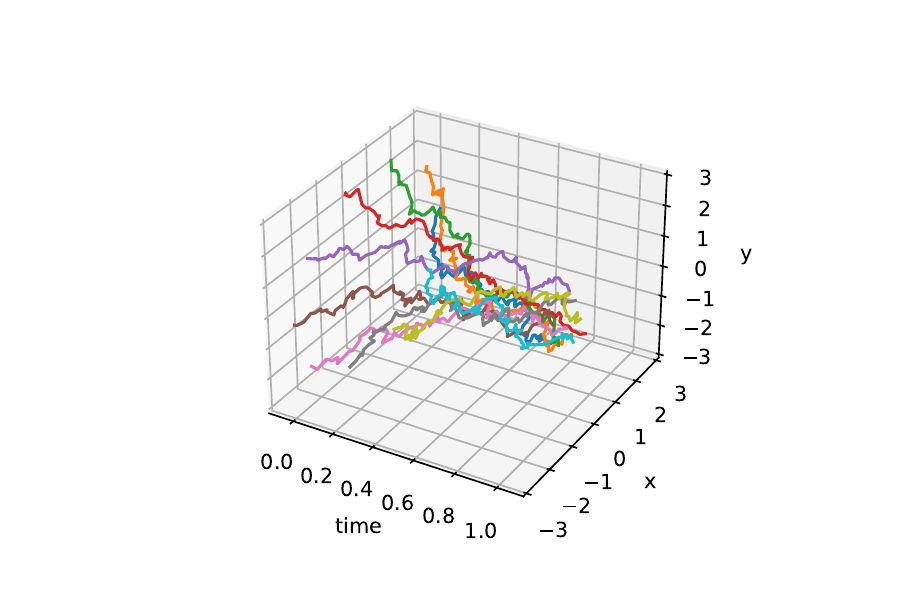}
	\caption{Paths simulated from linear diffusion \eqref{eq:true_SDE}}
	\label{fig:3dim_sample_path_true_diff}
	
\end{figure}

\begin{figure}[!h]

	\includegraphics[width=1\linewidth,]{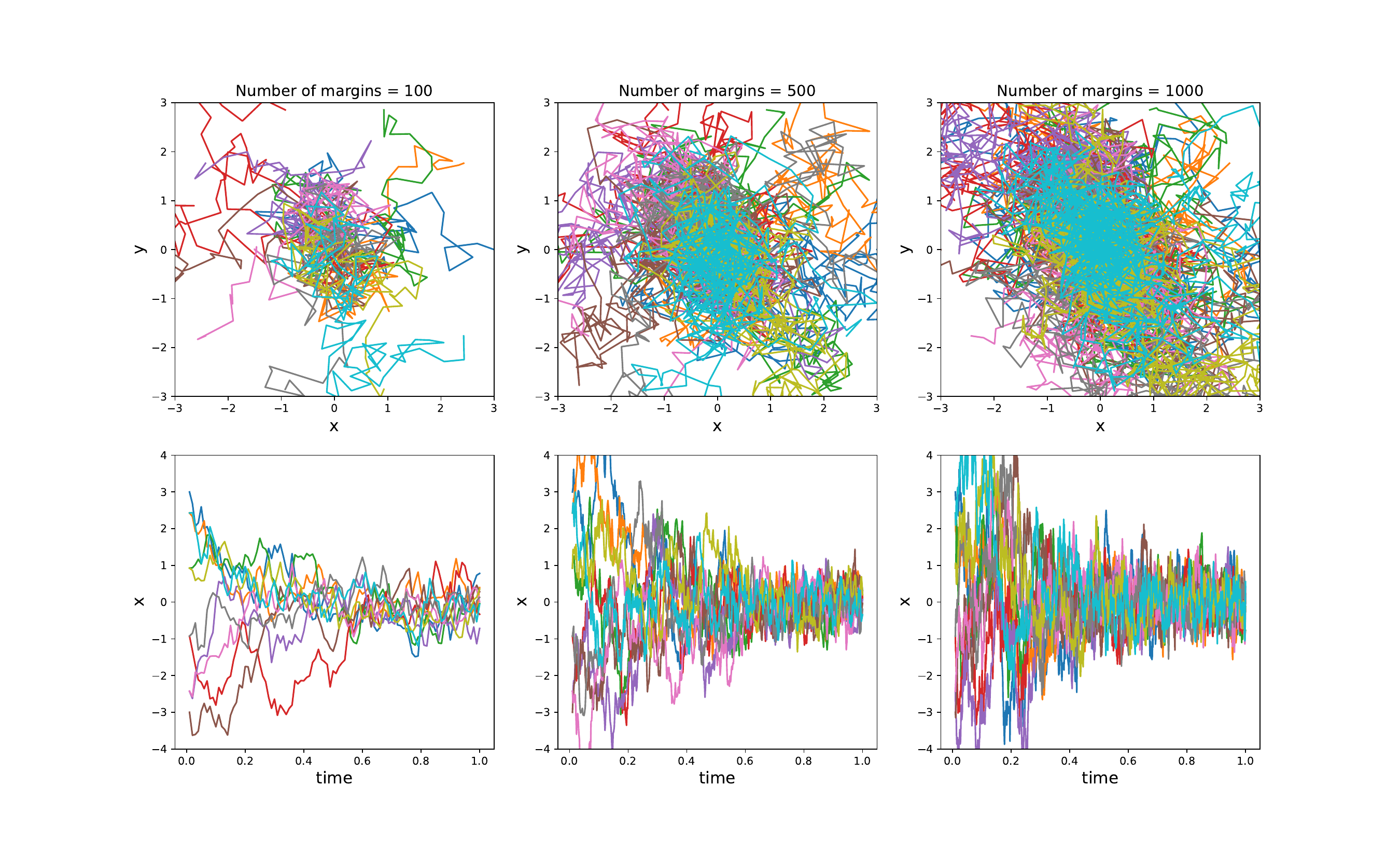}
	\caption{Sample path reconstructed with entropic optimal transport with {\bf independent coupling} reference. Top panels: two dimensional projection of the full trajectories. Bottom panels: evolution over time of the $x$-axis.}
	\label{fig:indep_ref_eot_paths}
\end{figure}
	
\begin{figure}[!h]	
	\includegraphics[width=1\linewidth,]{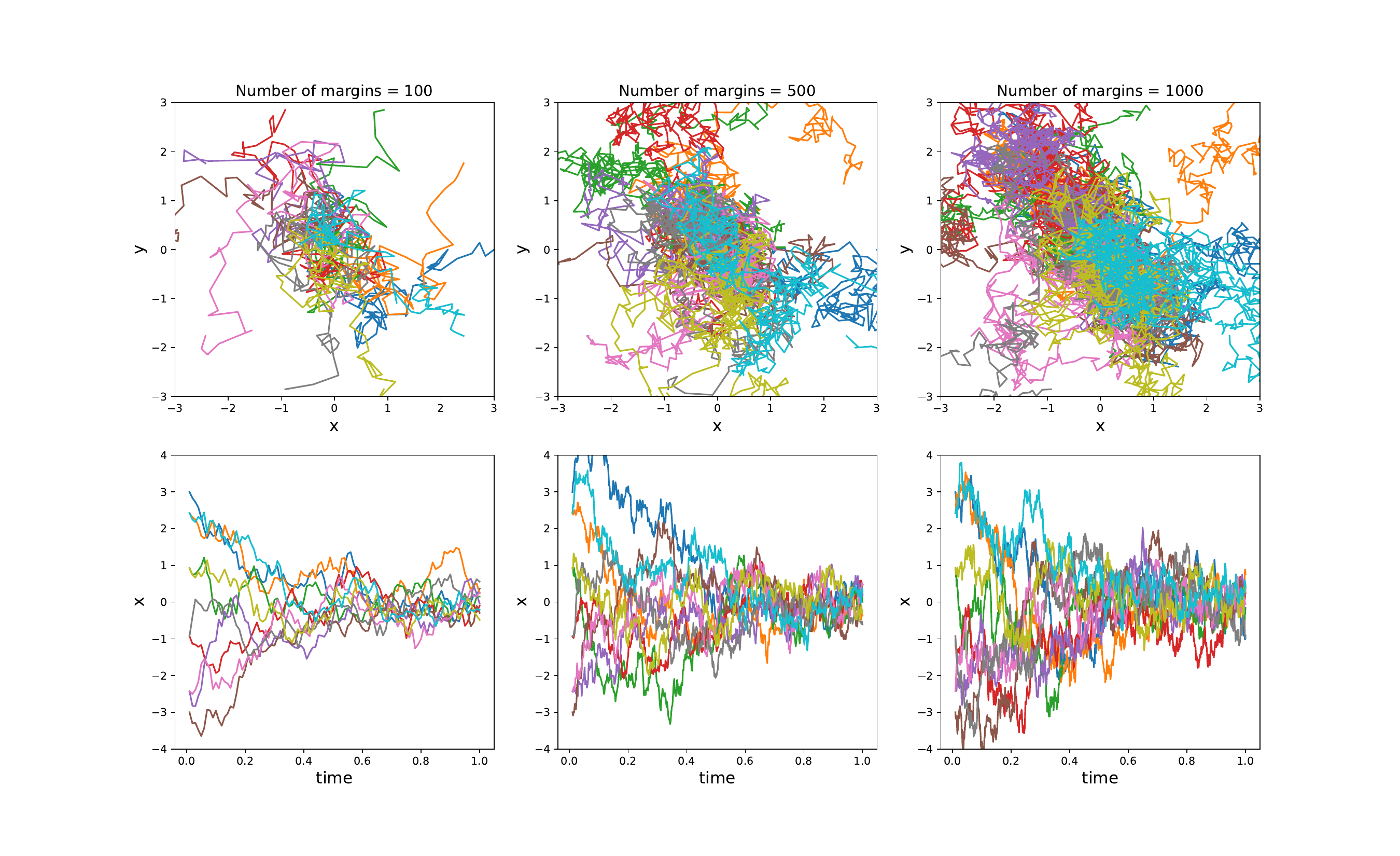}
	\caption{Sample paths reconstructed with entropic optimal transport with {\bf fractional Brownian motion} reference. The Hurst index has been set to $H=0.25$. Top panels: two dimensional projection of the full trajectories. Bottom panels: evolution over time of the $x$-axis.}
	\label{fig:fbm_ref_eot_paths}

\end{figure}

\section{Discussion}

We conclude with a qualitative discussion of the two components composing the objective function, namely the optimal transport term and the Kullback--Leibler term, and their respective roles in shaping regularity. Consider first the unregularized pairwise optimal transport problem
\begin{equation*}
\min_{\pi\in\Pi(\mu_{t_j},\mu_{t_{j+1}})} \int_{\mathbb{R}^d\times \mathbb{R}^d} \|x-y\|^2 \pi(dxdy).
\end{equation*}
Among all admissible couplings, this problem selects the tightest one, hence the most strongly correlated, and therefore induces the smoothest possible interpolation between $\mu_{t_j}$ and $\mu_{t_{j+1}}$. When such couplings are composed across time, classical Kolmogorov--\v{C}entsov-type arguments imply that strong short-time correlations translate into regular sample paths. In this sense, pure optimal transport acts as a \emph{maximum smoothness principle}. This principle is well suited when observations are dense in time and accurately measured. However, when time points are sparse 
it can become overly rigid: optimal transport then favors nearly deterministic transitions over long intervals, suppressing variability at large scales. 
Introducing a reference measure through an entropic penalty provides a way to relax this rigidity by enforcing a form of \emph{controlled roughness}. Augmenting the objective with
\begin{equation*}
	\mathrm{KL}(\pi|\pi_{\refe})
\end{equation*}
does more than stabilize computation: it introduces a competing structural bias. While the transport term promotes maximal correlation and smoothness, the Kullback--Leibler term penalizes departures from the reference coupling. This prevents the inferred dynamics from becoming smoother or more strongly correlated than what the reference deems plausible. When the reference coupling is induced by a continuous-time Gaussian process, this trade-off becomes inherently scale-dependent: the reference encodes how correlation should decay with time, so that short time gaps favor smooth transitions, while longer gaps allow for increased variability. This is particularly advantageous when observation times are irregular or sparse. From this perspective, the limitations of product reference couplings become apparent. A product reference corresponds to maximal roughness, enforcing complete decorrelation between successive states regardless of the temporal spacing. Such references are therefore not only dynamically trivial but also misaligned with the notion of temporal coherence. By contrast, Gaussian reference processes encode temporal regularity in a structured and tunable way. For example, Mat\'ern-type processes allow one to directly control the regularity of the inferred dynamics through a smoothness parameter, interpolating between rough, noise-dominated behavior and highly regular trajectories. In this sense, the entropic penalty acts as a scale-aware and tunable roughness prior.

In closing this section, we remark that our reconstruction is related to Schr\"odinger bridge problems \cite{leonardsurveySchrodinger2013, bernton2019schr, pavon2021data, hong2025trajectory}, which induce stochastic dynamics between prescribed endpoint distributions relative to a reference process. However, classical Schr\"odinger bridges presuppose a \emph{global} reference dynamics over the entire time interval. This is a modelling choice that may or may not be suitable, depending on the context. It is conceivable that it sometimes would be difficult to justify a ``global prior" statistically when only marginal information is available. In such cases, the locality of our framework is well-suited: the reference process is used only to ensure consistent \emph{pairwise} transitions
, but does not bias to the global behavior of the reference: rather than postulating a full global dynamics a priori, we let local Gaussian reference couplings act as modular building blocks, from which more complex dynamics can be assembled or iteratively refined. In this sense, our approach can be viewed as a statistically conservative counterpart to Schr\"odinger bridges, lying between static entropic optimal transport and full Schrödinger bridge formulations. Importantly, this locally decomposable formulation admits an explicit closed-form characterization in the Gaussian case. 

\color{black}

\begin{figure}[h]

	\includegraphics[width=0.9\linewidth,]{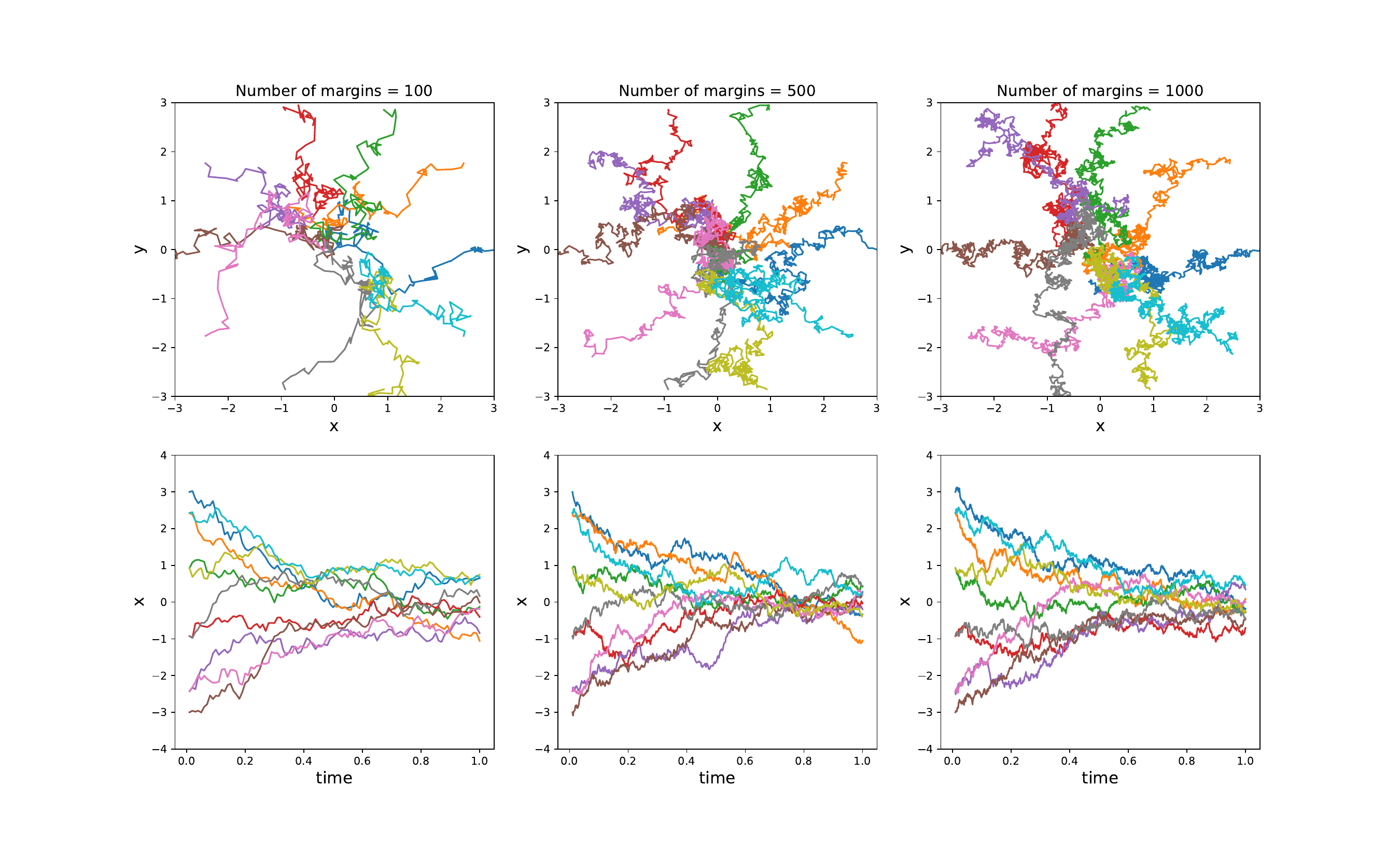}
	\caption{Sample paths reconstructed with entropic optimal transport and {\bf Brownian motion} as reference. Top panels: two dimensional projection of the full trajectories. Bottom panels: evolution over time of the $x$-axis.}
	\label{fig:brown_ref_eot_paths}
\end{figure}	
	\begin{figure}[h]	
	\includegraphics[width=1\linewidth,]{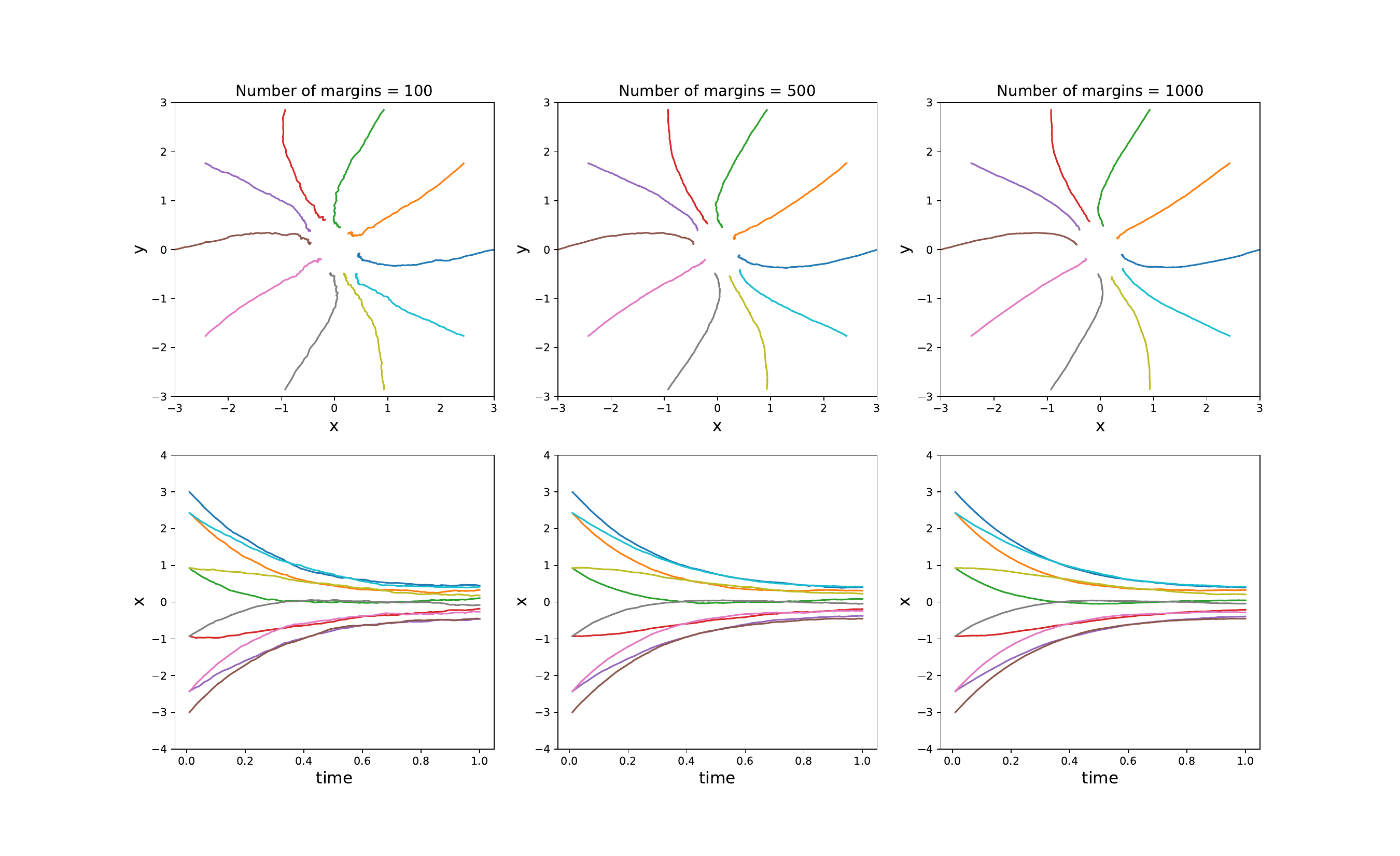}
	\caption{Sample paths reconstructed with entropic optimal transport and {\bf heat kernel} as reference. Top panels: two dimensional projection of the full trajectories. Bottom panels: evolution over time of the $x$-axis.}
	\label{fig:heat_ref_eot_paths}
\end{figure}

\newpage

\color{black}
\bibliography{Work_SMAT}

\newpage

\appendix

\section{Proofs related to the dual problem approach}\label{sec:proofs_dual}

\paragraph{Proof of Proposition \ref{prop:duality_OTGauss}}\label{proof:dual_problem}

\begin{proof}
	We start from the primal problem associated to $W_{\Sigma}^\varepsilon(\mu,\nu)$:
	\begin{equation*}
		\min_{X \in S_{2d}^{++}(\mathbb{R})}\langle Y + \varepsilon  \Sigma^{-1}, X \rangle_{\rm HS} -  \varepsilon \log \det X \quad \text{such that} \quad X_{11} = A,~ X_{22} = B.
	\end{equation*}
	To substitute the constraints $X_{11} = A$ and $X_{22} = B$ that appear in last expression, we introduce the function $g : S_{2d}(\mathbb{R}) \rightarrow \mathbb{R} \cup \{+\infty\}$ defined for every $X \in S_{2d}(\mathbb{R})$ by  
	\begin{equation}
		g(X) := \sup_{(P,Q) \in S_d \times S_d} \langle X_{11} - A, P \rangle_{\HS}  + \langle X_{22} - B, Q \rangle_{\HS}.
	\end{equation}
	
	Also, to work with a convex function on the vector space $S_{2d}(\mathbb{R})$ instead of the cone $S_{2d}^{++}(\mathbb{R})$, we set the log-determinant function to be $+\infty$ outside $S_{2d}^{++}(\mathbb{R})$. At $X \in S_{2d}(\mathbb{R})$, this extended log-determinant function\footnote{Extending the log-determinant by setting $-\log\det(X)=+\infty$ only if $\det(X) \leq 0$ would not yield a convex function on $S_{2d}(\mathbb{R})$.}, that we denote by $\varphi$, takes value
	\begin{equation*}
		\varphi(X) = \begin{cases}
			-\log \det(X), & \text{if}~ X \in S_{2d}^{++}(\mathbb{R})\\        + \infty, &\text{otherwise}.
		\end{cases}
	\end{equation*}
	To finish rewriting problem \eqref{eq:dual_start_comput}, we denote by $f$ the function $f : S_{2d}(\mathbb{R}) \rightarrow \mathbb{R} \cup \{+\infty\}$ defined by 
	\begin{equation}
		f(X) = \langle Y + \varepsilon \Sigma^{-1},X \rangle_{\HS} + \varepsilon \varphi(X).
	\end{equation}
	With these notations, our primal optimization problem \eqref{eq:dual_start_comput} reads
	\begin{equation}
		\min_{X \in S_{2d}(\mathbb{R})} f(X) + g(X).
	\end{equation}
	Applying Fenchel-Legendre duality Theorem \ref{thm:duality_convex}, we derive the equality
	\begin{equation}\label{eq:reminder_duality}
		\min_{X \in S_{2d}(\mathbb{R})} f(X) + g(X) = \max_{Z \in S_{2d}} -f^{*}(-Z) - g^*(Z),
	\end{equation}
	where $f^*$ and $g^*$ are the Legendre transform of $f$ and $g$ respectively. To compute $f^*$, we set $Z \in S_{2d}(\mathbb{R})$ and write
	\begin{align*}
		f^*(Z) &= \sup_{X \in S_{2d}(\mathbb{R})} {\langle Z, X \rangle_{\HS} - f(X)}\\
		& = \sup_{X \in S_{2d}^{++}(\mathbb{R})} {\langle Z-Y- \varepsilon \Sigma^{-1}, X \rangle_{\HS} - \varepsilon \varphi(X)} \\
		& = \varepsilon \sup_{X \in S_{2d}^{++}(\mathbb{R})} {\left\langle \frac{Z-Y}{\varepsilon} - \Sigma^{-1}, X \right \rangle_{\HS} -  \varphi(X)} \\
		& = \varepsilon \varphi^*\left(\frac{Z-Y}{\varepsilon} - \Sigma^{-1} \right) \\
		& = - \varepsilon \log \det \left( \Sigma^{-1} +\frac{Y-Z}{\varepsilon}\right) - 2\varepsilon d,
	\end{align*}
	using that the Legendre transform of the negative log-determinant $\varphi$ is well-known to derive last equality. Indeed, $\varphi^*$ is computed for example in \citep[p.~92,~Ex.~3.23]{boyd2004convex}, and given for every $V \in S_{2d}^{++}(\mathbb{R})$ by the formula 
	\begin{equation}
		\varphi^*(V) = -\log\det(-V) -2d.
	\end{equation}
	In the expression of $f^*$, in the case, $Y-Z$ does not belong to $S_{2d}^{++}(\mathbb{R})$, the value $-\log \det(Y-Z)$ is equal to $+\infty$.
	To compute $g^*$, we write $Z = \begin{pmatrix}
		F & K \\
		K^T&G
	\end{pmatrix}$ and
	\begin{align}
		g^*(Z)& = \sup_{X \in S_{2d}} \langle Z, X \rangle_{\HS} - \sup_{(P,Q) \in S_d \times S_d}\left\langle \begin{pmatrix}
			A & 0 \\
			0 & B
		\end{pmatrix} -\begin{pmatrix}
			X_{11} & X_{12} \\
			X_{12}^T& X_{22}
		\end{pmatrix}, \begin{pmatrix}
			P & 0 \\
			0 & Q
		\end{pmatrix} \right\rangle_{\HS} \\
		& = \sup_{X_{12} \in M_{d}}  \left\langle \begin{pmatrix}
			F & K \\
			K^T&G
		\end{pmatrix},  \begin{pmatrix}
			A & X_{12} \\
			X_{12}^*&  B
		\end{pmatrix} \right\rangle_{\HS} \\
		& = \begin{cases}
			\langle F, A \rangle_{\HS} + \langle G, B \rangle_{\HS} & \text{if}~ K = 0 \\
			+\infty & \text{otherwise.}
		\end{cases}
	\end{align}
	Maximizing the right hand side of \eqref{eq:reminder_duality} constraints to maximize over the matrices of the form $Z = \begin{pmatrix}
		F & 0 \\
		0 & G
	\end{pmatrix}$ with $(F,G) \in S_d \times S_d$.  After these computations, we have that 
	\begin{align*}
		-f^*(-Z) - g^*(Z) &=  \varepsilon \log \det \left( \Sigma^{-1} +\frac{Y+Z}{\varepsilon}\right) + 2\varepsilon d - \langle F, A \rangle_{\HS} - \langle G, B \rangle_{\HS} \\
		& = - \langle F, A \rangle_{\HS} - \langle G, B \rangle_{\HS} +  \varepsilon \log \det \begin{pmatrix}
			\Gamma_{11} +   \varepsilon^{-1}(I_d + F) & \Gamma_{12}-\varepsilon^{-1}I_d \\
			\Gamma_{12}^T-\varepsilon^{-1}I_d &  \Gamma_{22} +\varepsilon^{-1}(I_d + G)
		\end{pmatrix} + 2\varepsilon d\\
		& = -\langle F, A \rangle_{\HS} - \langle G, B \rangle_{\HS}  +  \varepsilon \log \left( \varepsilon^{-2d} \det \begin{pmatrix}
			(I_d + F) + \varepsilon \Gamma_{11}   & -I_d + \varepsilon \Gamma_{12}     \\
			- I_d  +   \varepsilon \Gamma_{12}^T&   (I_d + G) +  \varepsilon \Gamma_{22}
		\end{pmatrix} \right) + 2\varepsilon d \\ 
		& = -\langle F, A \rangle_{\HS} - \langle G, B \rangle_{\HS}  +  \varepsilon \log  \det \begin{pmatrix}
			(I_d + F) + \varepsilon \Gamma_{11}   & -I_d + \varepsilon \Gamma_{12}     \\
			- I_d  +   \varepsilon \Gamma_{12}^T&   (I_d + G) +  \varepsilon \Gamma_{22}
		\end{pmatrix} + 2\varepsilon d(1-\log(\varepsilon)). \\ 
	\end{align*}
	Ignoring for the moment the additive constant $2\varepsilon d(1-\log(\varepsilon))$, and making use of the notation $M_{\varepsilon}= I_d- \varepsilon \Gamma_{12}$, the right hand side of equation \eqref{eq:reminder_duality} reads
	\begin{equation}
		\max_{(F,G) \in S_d \times S_d}  \langle -F, A \rangle_{\HS} + \langle -G, B \rangle_{\HS} +   \varepsilon \log  \det \begin{pmatrix}
			(I_d + F) + \varepsilon \Gamma_{11}   & -M_{\varepsilon}    \\
			- M_{\varepsilon}^T&   (I_d + G) +  \varepsilon \Gamma_{22}
		\end{pmatrix}.
	\end{equation}

	After the changes of variable $F = I_d+F+\varepsilon \Gamma_{11}$ and $G = I_d+G+\varepsilon \Gamma_{22}$, which are licit as if $F$ and $G$ belong to $S_d$; so do $I_d+F+\varepsilon \Gamma_{11}$ and $I_d + G + \varepsilon \Gamma_{22}$, we derive 
	\begin{equation}
		W_\Sigma^\varepsilon(\mu, \nu) = \max_{(F,G) \in S_d \times S_d}  \langle I_d + \varepsilon \Gamma_{11} - F, A \rangle_{\HS} + \langle I_d + \varepsilon \Gamma_{22} -G, B \rangle_{\HS} +   \varepsilon \log  \det \begin{pmatrix}
			F   & -M_{\varepsilon}   \\
			-M_{\varepsilon}^T&    G 
		\end{pmatrix}.
	\end{equation}
	
	To conclude, recall that the function $-\log \det\begin{pmatrix}
		F   & -M_{\varepsilon}    \\
		-M_{\varepsilon}^T&    G 
	\end{pmatrix}$ equals $+\infty$ if the matrix $\begin{pmatrix}
		F   & -M_{\varepsilon}    \\
		-M_{\varepsilon}^T&    G 
	\end{pmatrix}$ is not positive-definite. A necessary condition for this to hold is that $F$ and $G$ are positive definite. We can thus reduce the constraint space to $S_d^{++}(\mathbb{R}) \times S_d^{++}(\mathbb{R})$; instead of $S_d(\mathbb{R}) \times S_d(\mathbb{R})$.
\end{proof}

\paragraph{Proof of Proposition \ref{prop:grad_dual}}

\begin{proof}
	The dual function is a sum of a linear term and a log-determinant term. The gradient of the linear term is constant and equal to $(-A,-B)$. Regarding the log-determinant term, if $(F,G)$ is such that $G-M_\varepsilon F^{-1} M_\varepsilon^{T}$ is positive-definite, from Theorem \ref{thm:positive_mat_positive_schur} the matrix 
	\begin{equation}
		\begin{pmatrix}
			F & -M_{\varepsilon} \\
			-M_{\varepsilon} & G
		\end{pmatrix}
	\end{equation}
	is positive-definite. We now exploit that the log-determinant is differentiable on $S_{2d}^{++}(\mathbb{R})$ and that its gradient is given by the inverse matrix. Moreover, in our case, only first variations of the form $H = \diag(H_1,H_2)$ are allowed. More precisely, introducing the function $\widetilde{D}_{\Sigma}^{\varepsilon}$ defined at $F,G$ by 
	\begin{equation}
		\widetilde{D}_{\Sigma}^{\varepsilon}(F,G) := \log \det  \begin{pmatrix}
			F & -M_{\varepsilon}\\
			-M_{\varepsilon}^T& G
		\end{pmatrix},
	\end{equation}
	we write
	\begin{align*}
		\widetilde{D}_{\Sigma}^{\varepsilon}(F + H_1,G + H_2) & =  \log \det \left(   \begin{pmatrix}
			F & -M_{\varepsilon}\\
			-M_{\varepsilon}^T& G
		\end{pmatrix} +  \begin{pmatrix}
			H_1 & 0\\
			0 & H_2
		\end{pmatrix} \right)\\
		& = \log \det  \begin{pmatrix}
			F & -M_{\varepsilon}\\
			-M_{\varepsilon}^T& G
		\end{pmatrix} + \left \langle \begin{pmatrix}
			F & -M_{\varepsilon}\\
			-M_{\varepsilon}^T& G
		\end{pmatrix}^{-1} ,   \begin{pmatrix}
			H_1 & 0\\
			0 & H_2
		\end{pmatrix} \right \rangle_{\rm HS} + o(H),
	\end{align*}
	where we used that that the gradient of the log-det function at point $X$ is its inverse $X^{-1}$ (see e.g. \cite[p.~641]{boyd2004convex}). Now, exploiting the inversion formula for block matrices, we derive that
	\begin{align*}
		\left \langle \begin{pmatrix}
			F & -M_{\varepsilon}\\
			- M_{\varepsilon}^T& G
		\end{pmatrix}^{-1} , \begin{pmatrix}
			H_1 & 0\\
			0 & H_2
		\end{pmatrix} \right \rangle_{\rm HS}& = \left \langle \begin{pmatrix}
			F^{-1}+F^{-1}M_\varepsilon S^{-1}M_\varepsilon^T F^{-1} & (-) \\
			(-) & S^{-1}
		\end{pmatrix} , \begin{pmatrix}
			H_1 & 0\\
			0 & H_2
		\end{pmatrix} \right \rangle_{\rm HS}\\ 
		&= \langle F^{-1} + F^{-1} M_{\varepsilon}S^{-1} M_{\varepsilon}^TF^{-1}, H_1 \rangle_{\HS} + \langle  S^{-1}, H_2 \rangle_{\HS},
	\end{align*}
	where $S = G- M_\varepsilon^TF^{-1}M_\varepsilon$. Collecting the pieces together, we get 
	\begin{equation*}
		\nabla D_{\Sigma}^\varepsilon(F,G) = (\varepsilon (F^{-1}+ F^{-1} M_{\varepsilon}S^{-1} M_{\varepsilon}^TF^{-1}) -A,   \varepsilon S^{-1}-B),
	\end{equation*}
	as claimed. Now, the first order optimality condition, that is the equation $\nabla D_\Sigma^\varepsilon(F,G) = 0$ is equivalent to the system
	\begin{equation}\label{eq:first_dual}
		\left\{ 
		\begin{array}{ccc}
			\varepsilon(F^{-1}+F^{-1} M_{\varepsilon}S^{-1} M_{\varepsilon}^TF^{-1}  ) & = & A \\
			\varepsilon S^{-1} & = & B.
		\end{array}
		\right.
	\end{equation}
	Exploiting the second equation, we can substitute $S^{-1}$ by $\varepsilon^{-1}B$ in the first equation to rewrite the first equation 
	\begin{equation}
		\varepsilon F^{-1}+ F^{-1} M_{\varepsilon}B M_{\varepsilon}^TF^{-1}    =  A.
	\end{equation}
	Then, we have the equivalences
	\begin{align*}
		\varepsilon F^{-1}+ F^{-1} M_{\varepsilon}B M_{\varepsilon}^TF^{-1}  =  A & \Leftrightarrow \varepsilon F + M_{\varepsilon}BM_{\varepsilon}^T  = FAF \\
		& \Leftrightarrow  FAF - \varepsilon F  -M_{\varepsilon}BM_{\varepsilon}^T= 0.\\
	\end{align*}
	For the second equation, we derive 
	\begin{align*}
		\varepsilon S^{-1}  =  B & \Leftrightarrow \varepsilon B^{-1} = S \\
		& \Leftrightarrow  \varepsilon B^{-1} = G -M^TF^{-1}M\\
		& \Leftrightarrow   G =  \varepsilon B^{-1} + M^TF^{-1}M.
	\end{align*}
	We thus have shown that the matrix equations system \eqref{eq:first_dual} is equivalent to the system
	\begin{equation*}
		\left\{ 
		\begin{array}{ccc}
			FAF - \varepsilon F - M_{\varepsilon}BM_{\varepsilon}^T& = & 0 \\
			\varepsilon B^{-1} + M_\varepsilon^T F^{-1}M_{\varepsilon}& = & G.
		\end{array}
		\right.
	\end{equation*}
\end{proof}

\paragraph{Proof of Theorem \ref{thm:optim_potentials}}

\begin{proof}
	Our strategy is to solve the system \eqref{eq:system_dual} starting from the first matrix equation 
	\begin{align*}
		FAF - \varepsilon F - M_{\varepsilon}BM_{\varepsilon}^T= 0 & \Leftrightarrow  AFAF - \varepsilon AF - AM_{\varepsilon}BM_{\varepsilon}^T= 0 \\
		& \Leftrightarrow Z^2 - \varepsilon Z - AM_{\varepsilon}BM_{\varepsilon}^T= 0, 
	\end{align*}
	with the notation $Z = AF$. This last matrix equation is very similar to \eqref{eq:mat_eq_primal}, that we solved for proving Theorem \ref{thm:closed_form_gauss}. Adapting the argument, we establish that the equation ${Z^2 - \varepsilon Z - AM_\varepsilon B M_\varepsilon^T}$ has a unique solution $Z_\varepsilon$ such that $F_\varepsilon := A^{-1}Z_\varepsilon$ is positive-definite. This solution is given by the matrix $Z_\varepsilon$ defined by the formula
	\begin{equation*}
		Z_\varepsilon :=\frac{\varepsilon}{2}I_d+ \left(AM_{\varepsilon}BM_{\varepsilon}^T+ \frac{\varepsilon^2}{4}I_d  \right)^{1/2}.
	\end{equation*}
	Then, as $Z_{\varepsilon} = AF_\varepsilon$, we have $F_\varepsilon = A^{-1} Z_{\varepsilon}$. This yields 
	\begin{equation}\label{eq:inproof_solution_K}
		F_\varepsilon =  A^{-1}\left( \frac{\varepsilon}{2}I_d +\left(AM_{\varepsilon}BM_{\varepsilon}^T+ \frac{\varepsilon^2}{4}I_d  \right)^{1/2} \right).
	\end{equation}
	Let us now compute the second dual variable. To do so, let us go back to system \eqref{eq:system_dual}; that we repeat below for clarity:
	\begin{equation*}
		\left\{ 
		\begin{array}{ccc}
			FAF - \varepsilon F - M_{\varepsilon}BM_{\varepsilon}^T& = & 0 \\
			\varepsilon B^{-1} + M_\varepsilon^TF^{-1}M_{\varepsilon}& = & G.
		\end{array}
		\right.
	\end{equation*}
	Notice that we can rewrite the first equation as follows:
	\begin{align*}
		FAF - \varepsilon F - M_{\varepsilon}BM_{\varepsilon}^T= 0 & \Leftrightarrow  FA - \varepsilon I_d - M_{\varepsilon}BM_\varepsilon^TF^{-1} = 0 \\
		& \Leftrightarrow  M_\varepsilon^T F^{-1} = B^{-1}M_{\varepsilon}^{-1}(FA - \varepsilon I_d) \\
		& \Leftrightarrow   M_\varepsilon^TF^{-1}M_{\varepsilon}= B^{-1}M_\varepsilon^{-1}(FA - \varepsilon I_d)M_{\varepsilon}.
	\end{align*}
	This expression of the matrix $M_{\varepsilon}^TF^{-1}M_{\varepsilon}$ allows us to rewrite the second equation of the system \eqref{eq:system_dual} as
	\begin{align*}
		G &= B^{-1}( \varepsilon I_d + M_{\varepsilon}^{-1}(FA - \varepsilon I_d)M_{\varepsilon})\\
		& = B^{-1}M_{\varepsilon}^{-1}FAM_{\varepsilon}.
	\end{align*}
	Introducing $F_{\varepsilon}$ solution of the first equation, we express it as in equation \eqref{eq:inproof_solution_K} to write $G_{\varepsilon}$ solution of the system as 
	\begin{equation}
		G_\varepsilon  = B^{-1} \left( M_{\varepsilon}^{-1}A^{-1}\left( \frac{\varepsilon}{2}I_d + \left(AM_{\varepsilon}BM_{\varepsilon}^T+ \frac{\varepsilon^2}{4}I_d  \right)^{1/2} \right)AM_{\varepsilon}\right).
	\end{equation}
	Now, the identity
	\begin{equation*}
		M_\varepsilon^{-1}A^{-1}\left(AM_{\varepsilon}BM_{\varepsilon}^T+ \frac{\varepsilon^2}{4}I_d  \right)^{1/2} AM_{\varepsilon}=\left(BM_{\varepsilon}^TAM_{\varepsilon}+ \frac{\varepsilon^2}{4}I_d  \right)^{1/2}
	\end{equation*}
	yields 
	\begin{equation}
		G_{\varepsilon} = B^{-1}\left( \frac{\varepsilon}{2}I_d +\left(BM_{\varepsilon}^TAM_{\varepsilon}+ \frac{\varepsilon^2}{4}I_d  \right)^{1/2} \right).
	\end{equation}
	If the dual function \eqref{eq:dual_objective} reaches a maximum, it is on the subset $\Pi(M_\varepsilon)$ introduced in equation \eqref{eq:constraint_dual}. Exploiting Theorem \ref{thm:positive_mat_positive_schur}, one can show that the dual matrices $F_\varepsilon$ and $G_\varepsilon$ are such that the matrix 
	\begin{equation}
		\begin{pmatrix}
			F_\varepsilon & -M_\varepsilon \\
			-M_\varepsilon^T & G_\varepsilon
		\end{pmatrix}
	\end{equation}
	is positive-definite. This shows that $(F_\varepsilon, G_\varepsilon)$ belongs to $\Pi(M_\varepsilon)$. Moreover, as the dual objective function is strictly concave on the set $\Pi(M_\varepsilon)$, and $\nabla D_{\Sigma}^\varepsilon(F_\varepsilon, G_\varepsilon) = (0,0)$, we deduce that the couple $(F_\varepsilon, G_\varepsilon)$ is the unique solution to the dual problem \eqref{eq:duality_OTGauss}.
\end{proof}

\paragraph{Proof of Corollary \ref{cor:eot_cost_dual}}

\begin{proof}
	Using the notations from Theorem \ref{thm:optim_potentials}, we can write the optimal transport cost between $\mu$ and $\nu$ as
	\begin{equation*}
		W_\Sigma^{\varepsilon}(\mu, \nu) = \langle I_d+\varepsilon \Gamma_{11} - F_{\varepsilon}^{\star}, A \rangle_{\HS} + \langle I_d + \varepsilon \Gamma_{22} - G_\varepsilon^{\star}, B \rangle_{\HS} + \varepsilon \log \det \begin{pmatrix}
			F_\varepsilon^{\star}  & -M_{\varepsilon}    \\
			-M_{\varepsilon}^T&    G_\varepsilon^{\star}
		\end{pmatrix} - \varepsilon \log \det(\varepsilon \Sigma^{-1}).
	\end{equation*}
	For the scalar product terms, we begin by writing 
	\begin{align*}
		\langle I_d +\varepsilon \Gamma_{11} - F_{\varepsilon}^{\star}, A \rangle_{\HS} & = \tr(A) + \varepsilon \langle \Gamma_{11}, A \rangle_{\HS} - \tr\left(\frac{\varepsilon}{2}I_d+ \left(AM_{\varepsilon}BM_{\varepsilon}^T+ \frac{\varepsilon^2}{4}I_d \right)^{1/2} \right) \\
		&  =  \tr(A) + \varepsilon \langle \Gamma_{11}, A \rangle_{\HS} - \tr\left( \left(AM_{\varepsilon}BM_{\varepsilon}^T+ \frac{\varepsilon^2}{4}I_d \right)^{1/2} \right) - \varepsilon \frac{d}{2}.
	\end{align*}
	And second we write, 
	\begin{align*}
		\langle I_d+\varepsilon \Gamma_{22}- G_{\varepsilon}^{\star}, B \rangle_{\HS} & = \tr(B) + \varepsilon \langle \Gamma_{22}, B \rangle_{\HS} - \tr\left(\frac{\varepsilon}{2}I_d +\left(BM_{\varepsilon}^T AM_{\varepsilon}+ \frac{\varepsilon^2}{4}I_d \right)^{1/2} \right) \\
		& = \tr(B) + \varepsilon \langle \Gamma_{22}, B \rangle_{\HS} - \tr\left(\left(BM_\varepsilon^T AM_{\varepsilon}+ \frac{\varepsilon^2}{4}I_d \right)^{1/2} \right) - \varepsilon \frac{d}{2}.
	\end{align*}
	Then, the identity
	\begin{equation*}
		M^{-1}A^{-1}\left(AM_{\varepsilon}BM_{\varepsilon}^T+ \frac{\varepsilon^2}{4}I_d  \right)^{1/2} AM_{\varepsilon}=\left(BM_{\varepsilon}^T AM_{\varepsilon}+ \frac{\varepsilon^2}{4}I_d  \right)^{1/2}
	\end{equation*}
	ensures that 
	\begin{equation*}
		\tr\left(\left(BM_{\varepsilon}^T AM_{\varepsilon}+ \frac{\varepsilon^2}{4}I_d \right)^{1/2} \right) = \tr\left(\left(AM_{\varepsilon}BM_{\varepsilon}^T+ \frac{\varepsilon^2}{4}I_d \right)^{1/2} \right).
	\end{equation*}
	We thus have that 
	\begin{align*}
		\langle I_d+\varepsilon \Gamma_{11} - F_{\varepsilon}^\star, A \rangle_{\HS} + \langle I_d + \varepsilon \Gamma_{22} - G_\varepsilon^{\star}, B \rangle_{\HS} & = \tr(A) + \tr(B) +\varepsilon \langle \Gamma_{11}, A \rangle_{\HS}  +\varepsilon \langle \Gamma_{22}, B \rangle_{\HS} \\
		&\quad  - 2 \tr\left( \left(AM_{\varepsilon}BM_{\varepsilon}^T+ \frac{\varepsilon^2}{4}I_d \right)^{1/2} \right) - \varepsilon d.
	\end{align*}

	Regarding the determinant term, we write it as 
	\begin{align*}
		\det \begin{pmatrix}
			F_\varepsilon^\star   & -M_{\varepsilon}    \\
			-M_{\varepsilon}^T&    G_\varepsilon^\star
		\end{pmatrix}& =  \det \begin{pmatrix}
			F_\varepsilon^\star  & -M_{\varepsilon}    \\
			-M_{\varepsilon}^T&    \varepsilon B^{-1}+ M_\varepsilon^{T}(F_\varepsilon^{\star})^{-1}M_\varepsilon
		\end{pmatrix} \\
		& = \det ( F_\varepsilon^{\star}) \det(\varepsilon B^{-1}+ M_\varepsilon^{T}(F_\varepsilon^{\star})^{-1}M_{\varepsilon}- M_\varepsilon^{T}(F_\varepsilon^{\star})^{-1}M) \\
		& = \varepsilon^d\det(A)^{-1}\det(B)^{-1} \det\left( \frac{\varepsilon}{2}I_d + \left(AM_{\varepsilon}BM_{\varepsilon}^T+ \frac{\varepsilon^2}{4}I_d \right)^{1/2} \right).
	\end{align*}
	Taking the logarithm of the determinant yields
	\begin{equation*}
		\varepsilon\log \det\begin{pmatrix}
			F_\varepsilon^\star   & -M_{\varepsilon}    \\
			-M_{\varepsilon}^T&    G_\varepsilon^\star
		\end{pmatrix} =  \varepsilon d \log(\varepsilon) - \varepsilon\log \det(AB) + \varepsilon \log \det\left( \frac{\varepsilon}{2}I_d + \left(AM_{\varepsilon}BM_{\varepsilon}^T+ \frac{\varepsilon^2}{4}I_d \right)^{1/2} \right).
	\end{equation*}
	
	Recalling the additive constant $-\varepsilon (2d \log(\varepsilon)+\log \det(\Sigma^{-1})$, and collecting the pieces we derive
	\begin{align*}
		W_{\Sigma}^{\varepsilon}(\mu, \nu)& = \tr(A) + \tr(B) - 2 \tr\left( \left(AM_{\varepsilon}BM_{\varepsilon}^T+ \frac{\varepsilon^2}{4}I_d \right)^{1/2} \right) + \varepsilon \log \det\left( \frac{\varepsilon}{2}I_d + \left(AM_{\varepsilon}BM_{\varepsilon}^T+ \frac{\varepsilon^2}{4}I_d \right)^{1/2} \right) \\
		&+\varepsilon \tr (\Gamma_{11} A )  +\varepsilon \tr (\Gamma_{22} B ) - \varepsilon \log \det(AB) - \varepsilon d - \varepsilon d \log(\varepsilon) - \varepsilon\log \det(\Sigma^{-1}).
	\end{align*}
\end{proof}

\section{Auxiliary results}\label{sec:auxiliary}

\begin{thm}\cite[p.~24, Thm.~1.9]{villani2021topics}
	\label{thm:duality_convex}
	Let $E$ be a separable normed vector space, $E^*$ its topological dual space, and $f$, $g$ two convex functions defined on $E$ with values in $\mathbb{R} \cup \{+\infty\}$. Denoting by $f^*$ and $g^*$ the Legendre-Fenchel transform of and $f$ and $g$ respectively; if there exists a point $z_0 \in E$ such that $f(z_0) < +\infty$, $g(z_0) < +\infty$ and $f$ is continuous at $z_0$, then 
	\begin{equation}
		\inf_{x \in E}\{ f(x) + g(x)\} = \max_{z \in E^*}\{- f^*(-z) - g^*(z)\}. 
	\end{equation}
\end{thm}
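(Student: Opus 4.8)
The statement is the Fenchel--Rockafellar duality theorem, so the plan is to reproduce the classical Hahn--Banach separation argument.

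First I would record the easy inequality (weak duality): for every $x \in E$ and every $z \in E^*$, the definition of the Legendre--Fenchel transform gives $f^*(-z) \geq \langle -z, x \rangle - f(x)$ and $g^*(z) \geq \langle z, x \rangle - g(x)$, and adding these two inequalities yields $-f^*(-z) - g^*(z) \leq f(x) + g(x)$. Taking the supremum over $z$ on the left and the infimum over $x$ on the right gives
\[
  \sup_{z \in E^*} \{ -f^*(-z) - g^*(z) \} \ \leq\ \inf_{x \in E} \{ f(x) + g(x) \} =: m .
\]
If $m = -\infty$ the identity is immediate, so from here on I may assume $m \leq f(z_0) + g(z_0) < +\infty$ is finite.

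The heart of the proof is a separation in $E \times \mathbb{R}$. I would introduce the convex sets $C_1 := \operatorname{epi} f = \{ (x,t) : t \geq f(x) \}$ and $C_2 := \{ (x,t) : t \leq m - g(x) \}$. The continuity of $f$ at $z_0$ ensures that $C_1$ has nonempty interior (it contains a neighbourhood of $(z_0, f(z_0)+1)$), while $\operatorname{int} C_1 \cap C_2 = \emptyset$, since a common point $(x,t)$ would satisfy $f(x) < t \leq m - g(x)$, i.e. $f(x) + g(x) < m$. The geometric Hahn--Banach theorem then yields a nonzero pair $(z^*, \kappa) \in E^* \times \mathbb{R}$ and a scalar $\alpha$ with $\langle z^*, x \rangle + \kappa t \geq \alpha \geq \langle z^*, x' \rangle + \kappa t'$ for all $(x,t) \in C_1$, $(x',t') \in C_2$. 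Sending $t \to +\infty$ forces $\kappa \geq 0$; and $\kappa = 0$ is impossible, because it would give $\langle z^*, \cdot \rangle \geq \alpha$ on $\operatorname{dom} f$ and $\leq \alpha$ on $\operatorname{dom} g$, whereas $z_0$ belongs to both domains and is interior to $\operatorname{dom} f$, which forces $\langle z^*, h \rangle = 0$ for all small $h$, hence $z^* = 0$, contradicting $(z^*,\kappa) \neq 0$. So $\kappa > 0$, and after rescaling I may take $\kappa = 1$; then choosing $t = f(x)$ in the first inequality gives $f^*(-z^*) \leq -\alpha$, choosing $t' = m - g(x')$ in the second gives $g^*(z^*) \leq \alpha - m$, and adding these shows $-f^*(-z^*) - g^*(z^*) \geq m$. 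Combined with weak duality this is an equality, attained at $z = z^*$.

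The hard part is the separation step, and more precisely two small but essential points within it: verifying that $\operatorname{epi} f$ genuinely has nonempty interior --- which is exactly the role of the hypothesis that $f$ is continuous at the common point $z_0$ --- so that the geometric Hahn--Banach theorem is applicable, and then ruling out the degenerate vertical separating hyperplane $\kappa = 0$, for which one uses that $z_0$ lies in $\operatorname{dom} g$ as well. Everything else is bookkeeping with the Young--Fenchel inequality and with signs.
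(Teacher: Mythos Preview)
The paper does not supply a proof of this theorem: it appears in the auxiliary appendix purely as a quoted statement with the citation \cite[p.~24, Thm.~1.9]{villani2021topics}, and is then invoked as a black box in the derivation of the dual problem (Proposition~\ref{prop:duality_OTGauss}). There is therefore nothing in the paper to compare your argument against.

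That said, your proposal is a correct and complete outline of the classical proof. You establish weak duality via the Young--Fenchel inequality, dispose of the case $m=-\infty$, and then carry out the Hahn--Banach separation of the epigraph of $f$ from the hypograph of $m-g$ in $E\times\mathbb{R}$, correctly identifying the two delicate points: that continuity of $f$ at $z_0$ guarantees $\operatorname{epi} f$ has nonempty interior (so that the geometric form of Hahn--Banach applies), and that $z_0\in\operatorname{dom} f\cap\operatorname{dom} g$ with $z_0$ interior to $\operatorname{dom} f$ rules out a vertical separating hyperplane. The extraction of the attaining functional $z^*$ from the normalized hyperplane is handled cleanly. This is exactly the argument given in Villani's book (and in Br\'ezis, Ekeland--Temam, etc.), so you have reproduced the intended proof even though the present paper omits it.
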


On the space of squared matrices $M_d(\mathbb{R})$, the Hilbert-Schmidt (also called Frobenius) scalar product is defined by $\langle A,B \rangle_{\rm HS} := \tr(A B^T)$; and reduces to $\langle A,B \rangle_{\rm HS}  = \tr(AB)$ between symmetric matrices.

\begin{lem}\label{lem:optim_kl_gauss} \cite[p.~34,~Ex.~17]{pardo2018statistical}\label{lem:divergence_covariance} Given a reference centered Gaussian measure $N(\Sigma)$, if $\gamma$ is a centered probability measure with covariance matrix $X$, then
	\begin{equation}
		\KL(N(X)| N(\Sigma)) \leq \KL(\gamma| N(\Sigma)).
	\end{equation}  
\end{lem}

\begin{prop}[Kullback-Leibler divergence]\cite[p.~33, ex.~11]{pardo2018statistical}\label{prop:kl_gaussian}
	For $\mu_0 = N_d(m_0,\Sigma_0)$ and $\mu_1 = N_d(m_1, \Sigma_1)$ two Gaussian measures on $\mathbb{R}^d$, with full-rank covariance matrices $\Sigma_0$ and $\Sigma_1$, the Kullback-Leibler divergence has the closed form expression
	\begin{equation}
		\KL(\mu_1 |\mu_0) = \frac{1}{2} \left[ (m_1 - m_0)^T \Sigma_0^{-1}(m_1 - m_0) + \tr(\Sigma_0^{-1} \Sigma_1 - \Id)   + \log\left(\frac{\det(\Sigma_0)}{\det(\Sigma_1)}\right) \right].
	\end{equation}
We point out that this divergence can be rewritten
\begin{equation}\label{eq:}
	\KL(\mu_1 |\mu_0) = \frac{1}{2} \left[ \langle \Sigma_0^{-1},(m_1-m_0) \otimes (m_1 - m_0) + (\Sigma_1 - \Sigma_0)\rangle _{\rm HS} - \log \det(\Sigma_0^{-1} \Sigma_1)\right], 
\end{equation}
which has a more geometric interpretation.
\end{prop} 

\begin{rem}
In the case where both Gaussian measures $\mu_0$ and $\mu_1$ have zero mean and respective full-rank covariance $\Sigma_0$ and $\Sigma_1$, the Kullback-Leibler divergence simplifies to
\begin{equation}\label{eq:kl_centred_gauss}
	\KL(\mu_1 |\mu_0) = \frac{1}{2} \left[ \langle \Sigma_0^{-1},(\Sigma_1 - \Sigma_0)\rangle _{\rm HS} - \log \det(\Sigma_0^{-1} \Sigma_1)\right].
\end{equation}
\end{rem}

\begin{lem}\label{lem:block_inv}\cite[p.~12, eq.~29]{magnus2019matrix}
    Let $M \in S_{2d}^{++}(\mathbb{R})$ be a positive-definite matrix that we can write by blocks as
    \begin{equation*}
        M =\begin{pmatrix}
            A & C \\
            C^T & B
        \end{pmatrix}, \quad \text{where} \quad A, B \in S_{d}^{++}(\mathbb{R}) \quad \text{and} \quad C \in M_d(\mathbb{R}).
    \end{equation*}
    Then, introducing the Schur complement $S := B - C^T A^{-1} C$, that belongs to $S_d^{++}(\mathbb{R})$, we can write the inverse matrix of $M$ as 
    \begin{equation}\label{eq:block_inv}
        M^{-1} = \begin{pmatrix}
            A^{-1} + A^{-1}CS^{-1}C^TA^{-1} & -A^{-1}C S^{-1} \\
            -S^{-1}C^TA^{-1} & S^{-1}
        \end{pmatrix}.
    \end{equation}
\end{lem}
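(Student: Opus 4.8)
The plan is to prove the inversion formula by exhibiting a block (Gaussian elimination) factorization of $M$ and then inverting that factorization factor by factor; equivalently one could simply verify that the matrix on the right-hand side of \eqref{eq:block_inv} is a two-sided inverse of $M$ by block multiplication. I will follow the first route.

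First I would record that the Schur complement $S := B - C^T A^{-1} C$ is positive-definite — in particular invertible, so that $S^{-1}$ in \eqref{eq:block_inv} makes sense. This follows from the congruence
\[
\begin{pmatrix} I_d & 0 \\ -C^T A^{-1} & I_d \end{pmatrix}
\begin{pmatrix} A & C \\ C^T & B \end{pmatrix}
\begin{pmatrix} I_d & -A^{-1} C \\ 0 & I_d \end{pmatrix}
= \begin{pmatrix} A & 0 \\ 0 & S \end{pmatrix},
\]
whose outer factors are unit triangular, hence invertible; thus $\diag(A,S)$ is congruent to $M \in S_{2d}^{++}(\mathbb{R})$ and is therefore itself positive-definite, which forces $A \in S_d^{++}(\mathbb{R})$ (already assumed) and $S \in S_d^{++}(\mathbb{R})$.

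Rearranging that identity yields the block LDU factorization
\[
M = \begin{pmatrix} I_d & 0 \\ C^T A^{-1} & I_d \end{pmatrix}
\begin{pmatrix} A & 0 \\ 0 & S \end{pmatrix}
\begin{pmatrix} I_d & A^{-1} C \\ 0 & I_d \end{pmatrix}.
\]
The inverse of each factor is immediate: a unit triangular block matrix is inverted by negating its off-diagonal block, and a block-diagonal matrix is inverted blockwise. Multiplying the three inverses in the reverse order,
\[
M^{-1} = \begin{pmatrix} I_d & -A^{-1} C \\ 0 & I_d \end{pmatrix}
\begin{pmatrix} A^{-1} & 0 \\ 0 & S^{-1} \end{pmatrix}
\begin{pmatrix} I_d & 0 \\ -C^T A^{-1} & I_d \end{pmatrix},
\]
and carrying out the two $2d \times 2d$ products collapses directly to the claimed formula \eqref{eq:block_inv}: the lower-right block is $S^{-1}$, the off-diagonal blocks are $-A^{-1} C S^{-1}$ and $-S^{-1} C^T A^{-1}$, and the upper-left block is $A^{-1} + A^{-1} C S^{-1} C^T A^{-1}$.

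There is no genuine obstacle here; the argument is routine matrix bookkeeping. The only point deserving attention is the positive-definiteness (hence invertibility) of $S$, which is exactly what makes the right-hand side of \eqref{eq:block_inv} well-defined; everything else is a short computation that one could equally well replace by directly checking that $M$ times the right-hand side of \eqref{eq:block_inv} equals $I_{2d}$, using $S^{-1} S = I_d$ to cancel terms.
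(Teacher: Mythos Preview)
Your proof is correct and complete: the block LDU factorization via unit triangular matrices is the standard route to this inversion formula, and you have also correctly justified the positive-definiteness of $S$ via congruence. The paper does not supply its own proof of this lemma --- it is stated as an auxiliary result with a citation to \cite{magnus2019matrix} --- so there is nothing to compare against; your argument stands on its own.
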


\begin{thm}\label{thm:positive_mat_positive_schur}\cite[p.~13, Thm.~1.3.3]{bhatia2009positive}
    Let $A, B$ be positive-definite matrices. The block matrix 
\begin{equation*}
    X_C = \begin{pmatrix}
        A & C \\
        C^T & B
    \end{pmatrix}
\end{equation*}
is positive-definite if and only if $B-C^TA^{-1}C$ is positive-definite. 
\end{thm}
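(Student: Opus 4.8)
The plan is to collapse the two-sided equivalence into a single congruence transformation coming from the block LDU (Schur) factorization of $X_C$. Since $A$ is positive-definite it is in particular invertible, so the first step is to verify by direct block multiplication the identity
\begin{equation*}
	X_C = \begin{pmatrix} I_d & 0 \\ C^T A^{-1} & I_d \end{pmatrix} \begin{pmatrix} A & 0 \\ 0 & B - C^T A^{-1} C \end{pmatrix} \begin{pmatrix} I_d & A^{-1} C \\ 0 & I_d \end{pmatrix}.
\end{equation*}
Writing $L := \begin{pmatrix} I_d & 0 \\ C^T A^{-1} & I_d \end{pmatrix}$ and using that $A^{-1}$ is symmetric (because $A$ is), the right-most factor equals $L^T$ exactly; hence $X_C = L\,\diag(A, S)\,L^T$, where $S := B - C^T A^{-1} C$ is the Schur complement and $L$ is invertible (it is lower triangular with unit diagonal).

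Next I would invoke that congruence by an invertible matrix preserves positive-definiteness: for invertible $L \in M_{2d}(\mathbb{R})$ and symmetric $M$ one has $\langle L M L^T x, x\rangle = \langle M (L^T x), L^T x\rangle$, and $x \mapsto L^T x$ is a bijection of $\mathbb{R}^{2d}$, so $L M L^T > 0$ if and only if $M > 0$. Applied to $M = \diag(A, S)$ this gives $X_C > 0 \iff \diag(A, S) > 0$. Finally, a symmetric block-diagonal matrix is positive-definite if and only if each diagonal block is (test against vectors supported on a single block for one direction, and add the two quadratic forms for the converse), so $\diag(A, S) > 0 \iff A > 0 \text{ and } S > 0$. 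Since $A > 0$ is assumed, the chain reduces to $X_C > 0 \iff B - C^T A^{-1} C > 0$, which is exactly the claim.

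The only point that requires any care — and essentially the only obstacle, modest as it is — is making sure the two triangular factors in the display are genuine transposes of one another, which is precisely where the symmetry of $A$ (hence of $A^{-1}$) enters: this is what turns the factorization into a congruence rather than a mere similarity and lets the Sylvester-inertia argument run. Everything else is routine block algebra, and one could alternatively just quote the standard Schur-complement criterion from the cited reference in place of spelling out these three steps.
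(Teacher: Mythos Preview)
Your proof is correct and is the standard congruence/Schur-factorization argument. The paper does not actually supply its own proof of this statement: it appears in the appendix as an auxiliary result quoted from \cite[p.~13, Thm.~1.3.3]{bhatia2009positive}, so there is nothing to compare against beyond noting that your argument is precisely the classical one found in that reference.
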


\end{document}